\documentclass[12pt,a4paper,leqno]{amsart}
\usepackage{booktabs}
\usepackage{stmaryrd}
\usepackage{fourier-orns}
\usepackage{extarrows}
\usepackage{amssymb,xspace} 
\usepackage{amstext}
\usepackage{pdfsync}
\usepackage{hyperref} 
\usepackage[mathscr]{eucal} 
\theoremstyle{plain}
\usepackage{amsbsy,amssymb,amsfonts,latexsym,eucal,amscd}
\usepackage[all]{xy}


\usepackage{stackengine}
\usepackage{xcolor}

\newcommand{\bbmu}{%
  \stackengine{0pt}{\mu}{\kern0.15ex\mu}{O}{l}{F}{T}{S}%
}



\newtheorem{theorem}{Theorem}[section]

\newtheorem{lemma}[theorem]{Lemma}

\newtheorem{proposition}[theorem]{Proposition}

\newtheorem{corollary}[theorem]{Corollary}

{\theoremstyle{definition}}

{\theoremstyle{definition}}

{\theoremstyle{definition}}

{\theoremstyle{definition}\newtheorem{example}[theorem]{Example}}

{\theoremstyle{definition}\newtheorem{definition}[theorem]{Definition}}

{\theoremstyle{definition}}

{\theoremstyle{definition}\newtheorem{remark}[theorem]{Remark}}

\setcounter{tocdepth}{1}

\author{Julien Grivaux}

\address{UMR7586 -- Institut de Math\'ematiques de Jussieu-Paris Rive Gauche \\
Sorbonne Universit\'e -- 4, Place Jussieu -- case 247 -- 75252 Paris Cedex 05 \\
\url{http://www.imj-prg.fr/}}

\email{julien.grivaux@img-prg.fr}
\setcounter{tocdepth}{3}
\title{Duality, polarity and convolution in umbral calculus}

\begin{document}

\begin{abstract}
In this paper, we revisit foundations of umbral calculus using a straightforward approach based on an explicit matrix realization of binomial convolution. We construct an umbral duality of Wronskian type for rational curves in echelon form, and connect it with an umbral version of the polarity pairing. Then we extend additive convolution to the umbral setting and provide an explicit inversion formula for the corresponding deviation terms. This enables us to derive analogs of Grace and Walsh representation theorems for  finite differences.
\end{abstract}
\vspace*{1.cm}
\maketitle
\tableofcontents

\section{Introduction}

If $(a_k)_{k \geq 0}$ is a sequence of complex numbers with $a_1 \neq 0$, consider the operator
\[
\mathfrak{d} = \sum_{k=1}^{+\infty} a_k \dfrac{\partial^k}{\partial t^k}, a_1 \neq 0.
\]
Such an operator is called a delta operator, it is a nondegenerate formal differential operator that decreases the degree of polynomials by one. Assicuated with $\mathfrak{d}$ is attached a sequence of polynomials $(S_k)_{k \geq 0}$ by requiring that $S_0=1$, each $S_k$ has degree $k$, and for $k \geq 1$,
\[
\begin{cases} 
S_k(0)=1, \\
\mathfrak{d} S_k = k S_{k-1}.
\end{cases}
\]
The sequence of polynomials $(S_k)_{k \geq 0}$, first appearing in \cite{Steffensen1941}, has the remarkable property that it is a binomial sequence. This means that for any $k$, 
\[
S_k(x+y)=\sum_{j=0}^k \binom{k}{j} S_j(x)S_{k-k}(y).
\]
The correspondence from delta operators to binomial sequences is bijective, as proved in \cite{Mullin-Rota}, and serves as the main entry point to umbral calculus. We refer the reader to the seminal article \cite{Rota}, as well as \cite{Rota-Roman}. The core of umbral calculus involves the study of special sequences $(P_k)_{k \geq 0}$ of polynomials such that each $P_k$ has degree $k$ and the endomorphism $\mathfrak{d}$ of $\mathbb{C}_n[t]$ given by $\mathfrak{d}P_k=kP_{k-1}$ is a delta operator. These sequences, introduced by Sheffer in \cite{Sheffer}, bear his name. The most well-known examples of Sheffer sequences are the Appell sequences introduced in \cite{Appell}, that correspond to the case where $\mathfrak{d}$ is the usual derivation. They include many classical sequences of polynomials such that Abel, Bernoulli, Euler, Hermite and Laguerre polynomials.
\par \medskip
The algebraic structure underlying the miracles of umbral calculus is the Hopf algebra structure on $\mathbb{C}[t]$. Our first aim in this paper is to provide an alternative description of the main basic properties of umbral calculus, based on the fact that sequences of polynomials are torsors under the infinite group of lower-triangular invertible matrices. Our construction has the same key ingredient as in the article \cite{Aceto2017}, namely the use of the matrix
\[
\begin{pmatrix}
0 &  &  &  & (0) \\ 
1 & 0 &  &  &  \\ 
 & 2 & \ddots &  &  \\ 
 &  & \ddots &  &  \\ 
(0) &  &  & n & 0
\end{pmatrix} 
\]
called creation matrix in \textit{loc.\!\! cit.} However, we believe our approach offers a particularly simple and illuminating perspective to the theory of Sheffer sequences. 
\par \medskip
Our second aim is to develop a suitable duality in umbral calculus. For Appell polynomials, it was noticed (see \cite{Costabile}, \cite{Friendly}) that inverting binomial convolution could be expressed explicitly via Cramer's formula. This method gives explicit determinantal formula for inverting sequences of Appell polynomials that can be particularly useful to express complicated polynomials as determinants when the inverse sequence is more simple, like Bernoulli polynomials (see \cite{Costabile2}). Such  formulas have been extended to Sheffer sequences in various ways (see \cite{Costabile3}, \cite{Wang}). These technically demanding works, however, do not illuminate the underlying algebro-geometric structures. Our main discovery is that there exists a general duality theory, related to an umbral version of the polarity pairing. Furthermore, this duality takes an elegant form for Sheffer sequences.  Even in the classical case where the delta operator is the standard derivation, these results seem to be new. 
\par \medskip
Let us now be more explicit. Let $\theta \colon \mathbb{C} \rightarrow \mathbb{C}^{n+1}$ be a rational curve of the form 
\[
\theta(t) = \begin{pmatrix}
P_0(t) \\ 
P_1(t) \\ 
\vdots \\ 
P_n(t)
\end{pmatrix} 
\] 
where each $P_k$ has degree $k$. The dual curve is defined by 
\[
\theta^*(t) = \begin{pmatrix}
Q_0(t) \\ 
Q_1(t) \\ 
\vdots \\ 
Q_n(t)
\end{pmatrix} 
\] 
where 
\[
Q_i(-t)=(-1)^i i!(n-i)!\,\mathrm{det}(\theta(t), \mathfrak{d}\theta(t), \ldots, \mathfrak{d}^{n-1}\theta(t), e_{n-i})
\] 
up to some normalization constant. More generally, we can consider the quantity
\[
W_{\theta}^{\mathfrak{d}}(-t, v) = \mathrm{det}(\theta(t), \mathfrak{d}\theta(t), \ldots, \mathfrak{d}^{n-1}\theta(t), v)
\]
up to the same normalization constant. In the classical case, this quantity looks the Wronskian of $\theta$, except for the last column where we put an arbitrary vector, this is why we call $W_{\theta}^{\mathfrak{d}}$ the umbral Wronskian.
\par \medskip 
The main ingredient to understand the duality in the classical case is the polarity paring on $\mathbb{C}_n[t]$, defined by
\[
\langle A, B \rangle_n = \sum_{k=0}^n (-1)^{n-k} A^{(k)} B^{(n-k)}.
\]
This pairing is of particular importance because it is invariant under the action of $\mathrm{SL}_2(\mathbb{C})$ on $\mathbb{C}_n[t]$ given by
\[
\left[\begin{pmatrix}
a & b \\ 
c & d
\end{pmatrix}.P\right](z) =(cz+d)^n P \left(\dfrac{az+b}{cz+d}\right).
\] 
It appeared in the literature as the only joint bilinear invariant for pairs of binary forms, see \cite[Chap. XI]{Grace_Young} and \cite[\S 5]{Kung_Rota}. It can be generalized to the umbral setting by putting
\[
\langle A, B \rangle_n^{\mathfrak{d}} = \sum_{k=0}^n (-1)^{n-k} \mathfrak{d}^{k}A . (\mathfrak{d}^*)^{n-k} B
\]
where $\mathfrak{d}^*$ is defined by $\mathfrak{d}^*P=-(\mathfrak{d}P^{\dagger})^{\dagger}$ and $Q^{\dagger}(t)=Q(-t)$, a crucial point being that this quantity remains constant. A first summary of our main results is as follows:
\begin{theorem}
Let $\mathfrak{d}$ be a delta operator on $\mathbb{C}_n[t]$, and $\eta=(S_k)_{0 \leq k \leq n}$ be the associated binomial curve. Then : 
\begin{enumerate}
\item[(i)] $\langle S_a, S_b^{\dagger}\rangle_n^{\mathfrak{d}} = a!b! \delta_{a+b,n}$.
\item[(ii)] If $\theta=(P_k)_{0 \leq k \leq n}$ and $\theta^*=(Q_k)_{0 \leq k \leq n}$ is the dual curve, then $\langle P_a, Q_b^{\dagger} \rangle_n^{\mathfrak{d}}=a!b!\delta_{a+b, n}$.
\item[(iii)] The binomial curve $\eta$ is self dual.
\item[(iv)] If $\theta$ is a Sheffer curve attached to $\eta$, $\theta^*$ is its inverse (for binomial convolution) and $n! W_{\theta}^{\mathfrak{d}}(t, \eta(s))=\theta_n^*(t+s)$.
\end{enumerate}
\end{theorem}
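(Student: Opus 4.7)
The plan is to take the four parts in order, with (i) as the backbone, a constancy/uniqueness argument for the polarity pairing to establish (ii) and (iii), and umbral Taylor expansion for (iv). The main obstacle is the bookkeeping of normalization constants: the full umbral Wronskian $\det(\theta,\mathfrak{d}\theta,\ldots,\mathfrak{d}^n\theta) = \prod_j\mathfrak{d}^jP_j$ is a nonzero constant but not $1$ in general, so one must fix the normalization in the definitions of $W_\theta^{\mathfrak{d}}$ and $\theta^*$ consistently across all four parts so that the stated identities hold on the nose.

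Part (i) is a direct calculation. From $\mathfrak{d}S_k = kS_{k-1}$ one obtains $\mathfrak{d}^kS_a = \frac{a!}{(a-k)!}S_{a-k}$, and iterating $\mathfrak{d}^*P = -(\mathfrak{d}P^{\dagger})^{\dagger}$ on $S_b^{\dagger}$ yields $(\mathfrak{d}^*)^{n-k}S_b^{\dagger} = (-1)^{n-k}\frac{b!}{(b-n+k)!}S_{b-n+k}^{\dagger}$. Inside the pairing the two factors of $(-1)^{n-k}$ cancel, and after reindexing $\ell = a-k$ with $m := a+b-n$ one gets
\[
\langle S_a, S_b^{\dagger}\rangle_n^{\mathfrak{d}} = \frac{a!\,b!}{m!}\sum_{\ell=0}^m \binom{m}{\ell}\,S_\ell(t)\,S_{m-\ell}(-t) = \frac{a!\,b!}{m!}\,S_m(0) = a!\,b!\,\delta_{a+b,n},
\]
by the binomial identity $S_m(x+y) = \sum\binom{m}{\ell}S_\ell(x)S_{m-\ell}(y)$ and $S_m(0) = \delta_{m,0}$.

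For (ii) and (iii) I would first show that the pairing is $t$-constant on $\mathbb{C}_n[t]\times\mathbb{C}_n[t]$. Writing $\mathfrak{d} = f(\partial)$ with $f(0) = 0$ and $f'(0) = a_1 \neq 0$, one has $\mathfrak{d}^* = -f(-\partial)$, so the signs in the pairing recombine into
\[
\langle A,B\rangle_n^{\mathfrak{d}}(t) = H(t,t), \qquad H(u,v) := \sum_{k=0}^n f(\partial_u)^kA(u)\cdot f(-\partial_v)^{n-k}B(v).
\]
A telescoping computation gives $(f(\partial_u) - f(-\partial_v))H = \mathfrak{d}^{n+1}A\cdot B - A\cdot f(-\partial_v)^{n+1}B$, which vanishes for $\deg A,\deg B\leq n$. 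Factoring $f(z_1) - f(z_2) = (z_1 - z_2)g(z_1,z_2)$ with $g(0,0) = a_1 \neq 0$, the operator $g(\partial_u,-\partial_v)$ is scalar-plus-nilpotent on polynomials of bounded degree, hence invertible; therefore $(\partial_u+\partial_v)H = 0$, so $H$ depends only on $u-v$ and $H(t,t)$ is constant. Combined with (i), which displays the pairing as an antidiagonal nondegenerate matrix in the bases $(S_a),(S_b^{\dagger})$, this forces the dual basis of $(P_a)$ with respect to the pairing to be uniquely determined. For (ii), I would verify by a cofactor expansion along the last column of the Wronskian that the determinantal formula for $Q_b^{\dagger}$ reproduces this unique dual with the correct numerical constants $a!\,b!$, using the triangular structure $\mathfrak{d}^jP_k = 0$ for $j > k$ to match leading coefficients. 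Assertion (iii) is then the specialization $\theta = \eta$: uniqueness together with (i) immediately gives $Q_b = S_b$.

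For (iv), set $u = -t$, so that $W_\theta^{\mathfrak{d}}(t,v) = \det(\theta(u),\mathfrak{d}\theta(u),\ldots,\mathfrak{d}^{n-1}\theta(u),v)$. Because $\theta$ is Sheffer for $\eta$, we have $\theta = B\eta$ for a lower-triangular matrix $B = \sum_\ell \frac{P_\ell(0)}{\ell!}L^\ell$ commuting with the creation matrix $L$; consequently $B^{-1} = \sum_\ell \frac{P_\ell^*(0)}{\ell!}L^\ell$, with coefficients read off from the generating-function inverse $A^*(z) = 1/A(z)$, where $A(z) = \sum_\ell P_\ell(0)\, z^\ell/\ell!$. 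Taylor expanding $\theta(s) = \sum_j \frac{S_j(s-u)}{j!}L^j\theta(u)$ and applying $B^{-1}$, the Sheffer-binomial identity $P_k^*(v) = \sum_\ell \binom{k}{\ell}P_\ell^*(0)S_{k-\ell}(v)$ compresses the double sum into
\[
\eta(s) = \sum_{k=0}^n \frac{P_k^*(s-u)}{k!}\,\mathfrak{d}^k\theta(u).
\]
Plugging into $W_\theta^{\mathfrak{d}}(t,\eta(s))$, every term with $k < n$ has a repeated column and vanishes; only $k = n$ survives, producing $\frac{P_n^*(s+t)}{n!}$ times the full constant Wronskian, which is absorbed by the overall normalization. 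This gives $n!\,W_\theta^{\mathfrak{d}}(t,\eta(s)) = \theta_n^*(t+s)$, and comparing at $s = 0$ with the Wronskian definition of $\theta^*$ identifies $\theta^*$ as the binomial-convolution inverse of $\theta$.
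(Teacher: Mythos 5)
Your proposal is essentially correct, but it travels a genuinely different road from the paper's, so a comparison is worth recording. The paper's engine is a single Cramer-rule identity (Proposition \ref{key}): writing $\theta = T.\eta$, the jet matrix $A(\theta,t)$ with columns $\mathfrak{d}^j\theta(t)$ factors as $T\,M(\eta(t))\,D$ with $D_{ii}=i!$, whence
\[
n!\,W^{\mathfrak{d}}_{\theta}(t,v)=(T^{-1}v\star\eta(t))_n,
\]
and (ii), (iii), (iv) all drop out of this one formula: duality becomes $T\mapsto D^{-1}T^{-1}D$, self-duality of $\eta$ is the case $T=I$, and for a Sheffer curve $T=M(u)$ commutes through to give $(u^{-1}\star v\star\eta(t))_n$. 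You instead prove (i) by a direct computation with the binomial identity (the paper derives it from the reproducing identity $\langle S_n(t-\alpha),Q\rangle_n^{\mathfrak{d}}=n!\,Q(\alpha)$ of Theorem \ref{masse}, proved using the Sheffer sequence of $\mathfrak{d}^*$); you prove constancy of the pairing by the telescoping relation $(f(\partial_u)-f(-\partial_v))H=0$ together with invertibility of $g(\partial_u,-\partial_v)$, which is correct and more self-contained than the paper's route, where constancy is a byproduct of the reproducing identity; and you prove (iv) by the expansion $\eta(s)=\sum_k \frac{P^*_k(s-u)}{k!}\,\mathfrak{d}^k\theta(u)$ (valid because $B=M(u)$ commutes with the creation matrix, plus the Sheffer identity for the inverse sequence) and killing repeated columns --- a clean determinantal substitute for the paper's convolution-algebra computation. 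What the paper's formula buys is uniformity; what your route buys is that (i) and constancy need no Wronskian machinery at all.

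Two soft spots. First, in (ii) you reduce to ``uniqueness of the dual family plus a verification,'' but the verification --- that the cofactor expansion of $W_{\theta}^{\mathfrak{d}}(t,e_{n-k})$ actually satisfies $\langle P_a,Q_b^{\dagger}\rangle_n^{\mathfrak{d}}=a!\,b!\,\delta_{a+b,n}$ --- is the real content of (ii), and ``matching leading coefficients'' via the triangular structure cannot establish the full pairing identity (it only controls degrees and top terms). This is exactly where the paper does honest work (Proposition \ref{key} and Corollary \ref{uw}). The gap is fillable with your own technique from (iv): umbral Taylor gives $\theta(s)=\sum_k\frac{S_k(s-u)}{k!}\,\mathfrak{d}^k\theta(u)$ for an \emph{arbitrary} Vandermonde curve, hence $n!\,W^{\mathfrak{d}}_{\theta}(t,\theta(s))=S_n(t+s)$ by the repeated-column argument; since the vectors $\theta(s)$ span $\mathbb{C}^{n+1}$, bilinearity together with your (i) (upgraded by linearity to the reproducing form $\langle S_n(t-\alpha),Q\rangle_n^{\mathfrak{d}}=n!\,Q(\alpha)$) yields the pairing identity for $v=e_{n-b}$, i.e.\ statement (ii), and then (iii) by your uniqueness argument. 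Second, in (iv), ``comparing at $s=0$'' only identifies the top component $Q_n=P^*_n$, since $\eta(0)=e_0$; to identify the whole dual curve you must expand $P^*_n(t+s)=\sum_k\binom{n}{k}P^*_{n-k}(t)S_k(s)$ and match coefficients of $S_k(s)$ against $n!\,W^{\mathfrak{d}}_{\theta}(t,e_k)$, using $\eta(s)=\sum_k S_k(s)e_k$. Both repairs are short and use only tools already present in your write-up, so I regard the proposal as correct in substance, with those two steps to be executed rather than asserted.
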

In the classical case, \[
\eta(s)=
\begin{pmatrix}
1 \\ 
s \\ 
\vdots \\ 
s^n
\end{pmatrix}.
\]
and for $t=0$ we get the determinantal representations of Appell polynomials given in \cite{Friendly} and \cite{Costabile2}. We provide another example of a determinantal formula, which is a far reaching generalization of the Vandermonde determinant : 
\begin{theorem}
Let $\mathfrak{d}$ be a delta operator on $\mathbb{C}_n[t]$, and $\eta=(S_k)_{0 \leq k \leq n}$ be the associated binomial curve. Denote by $a_1, \ldots, a_n$ the roots of $S_n$. 
\par \medskip
Let $\theta(t)=(P_k(t))_{0 \leq t \leq n}$ where each $P_k$ has degree $k$. If $C$ is the product of the leading coefficients of the $P_k$'s, then : 
\par \medskip
\begin{enumerate}
\item[(i)] For any $x_0, \ldots, x_n$ in $\mathbb{C}$, 
\[
\mathrm{det}\,(\theta(x_0), \ldots, \theta(x_n))=  C \prod_{i < j} (x_j-x_i).
\]
\item[(ii)] For any $t$ in $\mathbb{C}$,
\[
\mathrm{det}\,(\theta(t+a_1), \ldots, \theta(t+a_n), v)=  C \prod_{i < j} (a_j-a_i) \times n! W_{\theta}^{\mathfrak{d}}(-t, v).
\]
\end{enumerate}
\end{theorem}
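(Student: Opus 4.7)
For part (i), the result is a direct generalization of the classical Vandermonde identity. Writing $P_i(t) = \sum_{k=0}^i c_{ik} t^k$ with $c_{ii}$ the leading coefficient of $P_i$, the matrix $(P_i(x_j))_{i,j}$ factors as the product of the lower-triangular matrix $(c_{ik})_{i,k}$ (with diagonal entries $c_{00}, \ldots, c_{nn}$) and the matrix $(x_j^k)_{k,j}$, whose determinant is the standard Vandermonde product $\prod_{i<j}(x_j - x_i)$. Multiplicativity of the determinant gives the stated formula.

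For part (ii), the essential input is an umbral Taylor formula: for any $f \in \mathbb{C}_n[t]$,
\[
f(t + s) = \sum_{k=0}^{n} \frac{S_k(s)}{k!}\, \mathfrak{d}^k f(t).
\]
This is checked by linearity on the basis $f = S_j$, using $\mathfrak{d}^k S_j = \frac{j!}{(j-k)!}\, S_{j-k}$ together with the binomial identity for $(S_j)$. Applied componentwise to $\theta$, this reads $\theta(t+s) = \sum_{k=0}^{n} \frac{S_k(s)}{k!}\, \mathfrak{d}^k \theta(t)$; since each $a_i$ is a root of $S_n$, the $k=n$ term vanishes at $s = a_i$, so
\[
\theta(t + a_i) = \sum_{k=0}^{n-1} \frac{S_k(a_i)}{k!}\, \mathfrak{d}^k \theta(t).
\]
In particular, the $n$ vectors $\theta(t+a_i)$ all lie in the $n$-dimensional span of $\theta(t), \mathfrak{d}\theta(t), \ldots, \mathfrak{d}^{n-1}\theta(t)$.

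Expanding the left-hand determinant of (ii) multilinearly along these decompositions, only the permutation-indexed terms survive; after reordering columns to put the $\mathfrak{d}^k$'s in canonical order, one obtains
\[
\det\bigl(\theta(t+a_1), \ldots, \theta(t+a_n), v\bigr) = (\det N) \cdot \det\bigl(\theta(t), \mathfrak{d}\theta(t), \ldots, \mathfrak{d}^{n-1}\theta(t), v\bigr),
\]
where $N$ is the $n \times n$ matrix with entries $N_{ij} = S_{j-1}(a_i)/(j-1)!$. Up to the factorials $\prod_{k=0}^{n-1} k!$, the matrix $N$ is obtained from the truncated binomial curve $\eta_{n-1} = (S_0, \ldots, S_{n-1})^\top$ evaluated at $a_1, \ldots, a_n$, so applying part (i) to $\eta_{n-1}$ yields $\det N$ as the product of leading coefficients of $S_0, \ldots, S_{n-1}$ times $\prod_{i<j}(a_j - a_i)$. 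Comparison with the normalization implicit in the definition of $W_\theta^\mathfrak{d}$ then produces the factor $C \cdot n!$ in the statement.

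The main obstacle is administrative rather than conceptual: tracking signs in the multilinear expansion, organizing the factorial weights, and matching the normalization constant built into $W_\theta^\mathfrak{d}$. The structural observation driving the proof is that evaluating the umbral Taylor expansion at a root of $S_n$ simultaneously (a) confines $\theta(t+a_i)$ to the $n$-dimensional Wronskian-type subspace, and (b) produces the Vandermonde-like determinant for the binomial curve, which is in turn resolved by part (i).
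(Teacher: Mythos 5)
Your proof is correct, and it takes a genuinely different route from the paper's. For (i), the paper introduces the symmetric multiaffine wedge $R_k(a_1,\ldots,a_k)=\theta(a_1)\wedge\cdots\wedge\theta(a_k)/\prod_{i<j}(a_j-a_i)$ and shows by induction on confluent limits that $R_{n+1}$ is the constant $\mathrm{vol}(\theta)\,dV$, whereas your factorization of $(P_i(x_j))$ as the triangular coefficient matrix times the monomial Vandermonde matrix is more elementary and immediate. The real divergence is in (ii): the paper never expands the columns $\theta(t+a_i)$ at all; it passes through the classical Wronskian via $W^{\mathfrak{d}}_{\theta}(t,v)=S_n(t)\boxplus_n W_{\theta}(t,v)$ (proposition \ref{devw}, which rests on the umbral pairing and the deviation polynomial), identifies $R_n(-a_1,\ldots,-a_n)\wedge v$ with the polar form of $n!\,W_{\theta}(t,v)$, and uses that $P\mapsto S_n\boxplus_n P$ is evaluation of the polar form $L_P$ at $(t-a_1,\ldots,t-a_n)$. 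You instead evaluate the umbral Taylor formula (proposition \ref{taylor}) at the roots of $S_n$, so the top term drops and each $\theta(t+a_i)$ lands in the span of $\theta(t),\mathfrak{d}\theta(t),\ldots,\mathfrak{d}^{n-1}\theta(t)$, and you then reduce (ii) to (i) applied to the truncated binomial curve by a Cauchy--Binet expansion. Your route is more self-contained (it needs no additive convolution or deviation machinery), while the paper's buys the structural dividend that (ii) is literally the polar-form description of $S_n\boxplus_n(\cdot)$ applied to the Wronskian. One step you dismiss as ``administrative'' deserves to be written out, because it is where the normalization actually lives: from $A(\theta,t)=T\,M(\eta(t))\,D$ (proposition \ref{key}) one gets $\theta\wedge\mathfrak{d}\theta\wedge\cdots\wedge\mathfrak{d}^{n}\theta=\det T\,\prod_{k=0}^{n}k!\;dV$, and tracking the leading coefficients $s_k$ of the $S_k$ through your two uses of (i) (note $\det T=C/\prod_{k=0}^{n}s_k$, while your matrix $N$ contributes $\prod_{k=0}^{n-1}s_k\big/\prod_{k=0}^{n-1}k!\times\prod_{i<j}(a_j-a_i)$) leaves a net factor $C\,n!/s_n$ rather than $C\,n!$. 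This matches the statement exactly because the umbral-structure convention makes $S_n$ monic; the paper's own proof relies on the same normalization, at the step where $S_n\boxplus_n t^n=S_n$ is matched with $\prod_i(t-a_i)$, so this is not a defect of your argument, but the cancellation should be displayed rather than asserted.
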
 
Recall that the polarity pairing allows to define another interesting structure on polynomials, called additive convolution or Walsh convolution, introduced implicitly in the article \cite{Walsh}. It is defined as
\[
A \boxplus_n B(z)=\frac{1}{n!}\langle A(t), B(z-t)\rangle_n = \frac{1}{n!} \sum_{k=0}^n \partial^k_t A(0) \partial^{n-k}_t B(z).
\]
Extending additive convolution to the umbral setting is straightforward, replacing the usual polarity pairing by the umbral polarity pairing. It is defined by
\[
A \boxplus_n^{\mathfrak{d}} B(z)=\frac{1}{n!}\langle A(t), B(z-t)\rangle_n^{\mathfrak{d}} = \frac{1}{n!} \sum_{k=0}^n {\mathfrak{d}}^k_t A(0) {\mathfrak{d}}^{n-k}_t B(z).
\]
For finite differences, such convolution has already been introduced in \cite{PS}. Using umbral additive convolution, we obtain an  elegant reformulation of duality for Sheffer sequences : 
\begin{theorem}
Let $\mathfrak{d}$ be a delta operator on $\mathbb{C}_n[t]$, and $\eta=(S_k)_{0 \leq k \leq n}$ be the associated binomial curve. Then the map $v \mapsto v \star \eta(t)$ is an algebra morphism between the group of Sheffer curves attached to $\eta$ endowed with binomial convolution and $(\mathbb{C}_n[t], \boxplus_n^{\mathfrak{d}})^{\times}$.
\end{theorem}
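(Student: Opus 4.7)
The plan is to unpack the definitions and reduce the claim to a short symmetric calculation built on the Sheffer relation $\mathfrak{d}P_k=kP_{k-1}$. For a Sheffer curve $v=\theta=(P_0,\ldots,P_n)^{T}$ attached to $\eta$, the quantity $v\star\eta(t)$ should be understood as the top component $P_n(t)$: indeed the Sheffer addition formula gives
\[
P_n(t)=\sum_{k=0}^n\binom{n}{k}P_{n-k}(0)\,S_k(t),
\]
which is the natural umbral pairing of $v$ with the vector $\eta(t)$.

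First I would identify the multiplicative identity of $(\mathbb{C}_n[t],\boxplus_n^{\mathfrak{d}})$. It should be $S_n(t)$, mirroring the fact that $\eta$ itself is the identity for binomial convolution on Sheffer curves. To check this, expand any $P\in\mathbb{C}_n[t]$ in the umbral Taylor basis $P=\sum_{k=0}^n \mathfrak{d}^kP(0)\,S_k/k!$, apply $\mathfrak{d}^{n-k}S_n=\frac{n!}{k!}S_k$ inside the definition of $\boxplus_n^{\mathfrak{d}}$, and collect terms to obtain $P\boxplus_n^{\mathfrak{d}}S_n=P$.

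Next, for two Sheffer curves $\theta^{(1)}=(P_k)$ and $\theta^{(2)}=(Q_k)$ attached to $\eta$, I would compute the top component of their binomial convolution. Using either the matrix realization of binomial convolution developed earlier in the paper, or the generating function representation $A_\theta(t)e^{xB(t)}$ of Sheffer sequences (where binomial convolution corresponds to multiplying the scalar factors $A_\theta$), a Cauchy product calculation yields
\[
(\theta^{(1)}*\theta^{(2)})_n(z)=\sum_{k=0}^n\binom{n}{k}P_k(0)\,Q_{n-k}(z).
\]
On the other side, substituting $\mathfrak{d}^kP_n=\tfrac{n!}{(n-k)!}P_{n-k}$ and $\mathfrak{d}^{n-k}Q_n=\tfrac{n!}{k!}Q_k$ into the definition of the umbral polarity pairing gives
\[
P_n\boxplus_n^{\mathfrak{d}}Q_n(z)=\sum_{k=0}^n\binom{n}{k}P_{n-k}(0)\,Q_k(z),
\]
which matches the previous expression after the reindexing $k\leftrightarrow n-k$. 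This shows the map is a group homomorphism.

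It remains to check that the image lands in the units and is surjective onto them. Injectivity and surjectivity of $v\mapsto P_n$ follow because the Sheffer relation determines each $P_{n-k}$ from $P_n$ via $P_{n-k}=\mathfrak{d}^kP_n/\tfrac{n!}{(n-k)!}$, and $P_n$ has degree exactly $n$; one then verifies that $(\mathbb{C}_n[t],\boxplus_n^{\mathfrak{d}})^{\times}$ coincides with the set of polynomials of degree $n$ by using that $S_n$ is the identity and reading off invertibility from the leading coefficient. The main obstacle is primarily notational, in fixing the conventions for $v\star\eta(t)$ and the ring structure on $\mathbb{C}_n[t]$; once $S_n$ is pinpointed as the identity, the algebraic heart of the argument is the single symmetric computation above.
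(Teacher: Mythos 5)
Your proof is correct, and your reading of the statement matches the paper's conventions: the map really is $v\mapsto(v\star\eta(t))_n$, the top component of the Sheffer curve, and your expansion $P_n(t)=\sum_{k}\binom{n}{k}P_{n-k}(0)S_k(t)$ is precisely the Sheffer identity of Proposition~\ref{building}~(iv) evaluated at the basepoint. Your route, though, is genuinely different from the paper's. You argue by direct coefficient computation: starting from the $\mathfrak{d}$-only formula $n!\,P\boxplus_n^{\mathfrak{d}}Q(z)=\sum_{k}\mathfrak{d}^kP(0)\,\mathfrak{d}^{n-k}Q(z)$ (Proposition~\ref{misc}~(vii) --- note that invoking this, rather than the raw pairing with $(\mathfrak{d}^*)^{n-k}$ acting on $Q(z-t)$, tacitly uses the dagger conversion proved there), you substitute the Sheffer relations $\mathfrak{d}^kP_n=(n)_kP_{n-k}$ and match the resulting binomial sum against the Cauchy product giving the top component of $(u\star v)\star\eta$. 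The paper instead computes
\[
\bigl((u\star\eta(t))\boxplus_n^{\mathfrak{d}}(v\star\eta(t))\bigr)(z)
=\frac{1}{n!}\bigl\langle u\star\eta(t),\,(v\star\eta(z))\star\eta(-t)\bigr\rangle_n^{\mathfrak{d}}
=u\star v\star\eta(z),
\]
splitting the argument via the functional equation $\eta(z-t)=\eta(z)\star\eta(-t)$ and then applying the orthogonality relation $\langle v\star\eta(t),w\star\eta(-t)\rangle_n^{\mathfrak{d}}=n!\,v\star w$ of Theorem~\ref{masse}~(vi). The two computations encode the same underlying fact ($\langle S_p,S_q^{\dagger}\rangle_n^{\mathfrak{d}}=p!\,q!\,\delta_{p+q,n}$), but yours is more elementary and self-contained --- it avoids $\mathfrak{d}^*$, the $\dagger$ involution, and the invariance machinery --- and you additionally spell out bookkeeping the paper delegates elsewhere: that $S_n$ is the neutral element (Proposition~\ref{misc}~(i)), that the units of $\boxplus_n^{\mathfrak{d}}$ are exactly the degree-$n$ polynomials (Proposition~\ref{misc}~(v)), and that $v\mapsto P_n$ is bijective because $\mathfrak{d}$ lowers degree by exactly one. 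What the paper's route buys is uniformity: the same Wronskian-and-pairing apparatus simultaneously yields the two companion statements proved alongside this one (the dual curve $\theta^*$ as the convolution inverse of a Sheffer curve, and $n!\,W_{\theta}^{\mathfrak{d}}(t,\eta(s))=\theta_n^*(t+s)$), whereas your calculation establishes only the morphism claim.
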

In the classical case, this means that inverting Appell sequences corresponds to inverting them with respect to additive convolution.
\par \medskip
At this point, some explanation is necessary. Classical umbral calculus deals with infinite sequence of polynomials, while we stayed most of the time in the realm of the finite dimensional setting. The reason is simple: the duality pairing and all its variants (Wronskians, additive convolution) depend on some level $n$ that cannot be avoided. We have fully taken this point in consideration and went even further: we prove that a delta operator on $\mathbb{C}_n[t]$ (which we call an umbral structure of level $n)$ is entirely determined up to an $n^{\mathrm{th}}$ root of unity by the top degree term $S_n$ of its binomial sequence, and vice versa. This leads to the notion of an umbral structure of level $n$, which is a delta operator on $\mathbb{C}_n[t]$ modulo an $n^{\mathrm{th}}$ root of unity. It turns out that many umbral notions we considered for delta operators on $\mathbb{C}_n[t]$ like umbral wronskians, umbral pairings or umbral additive convolution only depend on the underlying umbral structure. In this way, we see that a single degree $n$ polynomial is enough to encode the whole construction. 
\par \medskip
To further investigate the interplay between binomial sequences and additive convolution, we introducte the notion of deviation polynomial. If $\mathfrak{d}$ is an umbral structure of level $n$, the deviation polynomial $R_n$ is the inverse of the binomial polynomial $S_n$ with respect to the additive convolution $\boxplus_n$. This polynomial allows to express the umbral pairing as well as the umbral additive convolution in terms of the corresponding classical operations, making its understanding essential. To this end, we prove an inversion formula, this time in the framework of infinite sequences of polynomials, which is more natural.
\begin{theorem}
Let $(S_n)_{n \geq 0}$ be a binomial sequence attached to a delta operator $\mathfrak{d}=\varphi(\partial / \partial t)$ where $\varphi \in \mathbb{C}[[z]]$, $\varphi(0)=0$, $\varphi'(0)\neq 0$.
If $(A_n)_{n \geq 0}$ is the Appell sequence whose generating series is
\[
\sum_{n=0}^{+\infty} \frac{A_n(t)}{n!} z^n=\frac{\exp(tz)}{z\mathrm{D}_{\log}(\varphi(z))}
\] then the deviation polynomial $R_n$ is given by
\[
R_n(t) =\left(\mathfrak{d} (\partial / \partial t)^{-1} \right)^{n} A_n(t) = \underbrace{\frac{\mathfrak{d} t^{n+1}}{n+1} \boxplus_n \ldots \boxplus_n \frac{\mathfrak{d} t^{n+1}}{n+1}}_{n \, \textrm{times}} \boxplus_n A_n(t).
\]
\end{theorem}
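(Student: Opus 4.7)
The strategy is to identify additive convolution $\boxplus_n$ with composition in a commutative subalgebra of $\mathrm{End}(\mathbb{C}_n[t])$. For $P \in \mathbb{C}_n[t]$, the map $\Phi(P) \colon B \mapsto P \boxplus_n B$ is $\mathbb{C}$-linear, and a direct verification on the basis $\{t^k\}_{0 \leq k \leq n}$ shows that $\boxplus_n$ is associative and commutative with unit $t^n$; hence $\Phi$ is a morphism of commutative algebras into $\mathrm{End}(\mathbb{C}_n[t])$, and the iterated convolution in the statement is unambiguous.

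The equality of the two explicit formulas for $R_n$ follows once we compute $\Phi(E)$ where $E := \frac{\mathfrak{d} t^{n+1}}{n+1}$. Writing $\mathfrak{d} = \varphi(D)$ with $D = \partial/\partial t$ and $\varphi(z) = \sum_{k \geq 1} a_k z^k$, a direct expansion gives $E(t) = \sum_{i=0}^n \frac{n!\, a_{n+1-i}}{i!} t^i$ and therefore $E^{(k)}(0) = n!\, a_{n+1-k}$ for $0 \leq k \leq n$. Substituting into the formula for $\boxplus_n$ yields
\[
E \boxplus_n B = \sum_{k=0}^n a_{k+1}\, B^{(k)} = \mathfrak{d} D^{-1} B,
\]
where $\mathfrak{d} D^{-1} = \varphi(D)/D = \sum_{k \geq 0} a_{k+1} D^k$ is a well-defined formal power series operating on polynomials. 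Iterating $n$ times and convolving with $A_n$ therefore produces $(\mathfrak{d} D^{-1})^n A_n$, confirming the equality between the two given formulas.

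It remains to verify $S_n \boxplus_n (\mathfrak{d} D^{-1})^n A_n = t^n$, i.e.\ that this polynomial is the $\boxplus_n$-inverse of $S_n$. Unwinding $\sum_n A_n(t) z^n/n! = g(z) e^{tz}$ with $g(z) = \varphi(z)/(z \varphi'(z))$ immediately gives $A_n(t) = g(D)\, t^n$. Dually, from $\sum_n S_n(t) z^n/n! = \exp(t\, \varphi^{-1}(z))$ one obtains $S_n^{(k)}(0) = n!\, [z^n]\, \varphi^{-1}(z)^k$. The Lagrange inversion formula combined with the identity $H^{n+1} \varphi' = H^n - \frac{w}{n}(H^n)'$ (where $H(w) := w/\varphi(w)$, itself a consequence of $w H' = H - H^2 \varphi'$) then yields the closed form
\[
S_n \boxplus_n B = H(D)^{n+1} \varphi'(D)\, B.
\]

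The final assembly is purely algebraic. Since $\mathfrak{d} D^{-1} = H(D)^{-1}$ and all the operators involved lie in the commutative algebra $\mathbb{C}[[D]]$,
\[
S_n \boxplus_n (\mathfrak{d} D^{-1})^n A_n = H(D)^{n+1} \varphi'(D)\, H(D)^{-n}\, g(D)\, t^n = H(D)\, \varphi'(D)\, g(D)\, t^n = t^n,
\]
because $H(w)\, \varphi'(w)\, g(w) = \frac{w}{\varphi(w)} \cdot \varphi'(w) \cdot \frac{\varphi(w)}{w \varphi'(w)} = 1$. The main technical step is the Lagrange inversion calculation producing the closed form $S_n \boxplus_n B = H(D)^{n+1} \varphi'(D) B$; everything else reduces to either definitions or routine manipulation of formal power series in $D$.
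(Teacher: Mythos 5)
Your proof is correct, and it takes a genuinely different route from the paper's. The paper's proof of Theorem \ref{lagrange} leans on two prior structural results: Theorem \ref{apdrift}, which already establishes (via an antiderivative trick, where you use an explicit coefficient expansion of $\mathfrak{d}t^{n+1}/(n+1)$) that convolution with $\mathfrak{d}t^{n+1}/(n+1)$ is the operator $\mathfrak{d}(\partial/\partial t)^{-1}$ and that $R_n=(\mathfrak{d}(\partial/\partial t)^{-1})^n A_n$ for \emph{some} Appell sequence $(A_n)$; and the group isomorphism between Appell sequences and invertible power series, so that inverting $B_n=(\mathfrak{d}(\partial/\partial t)^{-1})^n S_n$ in the Appell group is the same as inverting with respect to $\boxplus_n$. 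Lagrange inversion then enters only through the single coefficient identity $[w^n](w/\phi(w))^{n+1}=(n+1)c_{n+1}$, used to compute the generating series $b(z)=z\mathrm{D}_{\log}(\varphi(z))$ of $(B_n)$, whence $a=1/b$. You bypass both the Appell-group mechanism and Theorem \ref{apdrift} entirely: you compute the full symbol of convolution-by-$S_n$, namely $S_n\boxplus_n B=H(D)^{n+1}\varphi'(D)B$ with $H(w)=w/\varphi(w)$, via the stronger Lagrange--B\"urmann power formula $[z^n]\phi(z)^k=\frac{k}{n}[w^{n-k}]H(w)^n$ together with $wH'=H-H^2\varphi'$ (I checked both identities and the resulting telescoping $H^n-\frac{w}{n}(H^n)'=H^{n+1}\varphi'$; they are correct), and you then verify the product of symbols $H\varphi' g=1$. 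This buys self-containedness and an intermediate formula of independent interest: your closed form gives $R_n=H(D)^{-(n+1)}\varphi'(D)^{-1}t^n$ directly, which for $\varphi(z)=e^z-1$ instantly recovers the expression $R_n=n!\,\Delta^n(\Lambda t^{2n+1})/(2n+1)!$ of Theorem \ref{demon}, whereas the paper's route uses a lighter Lagrange inversion but depends on the surrounding structure theory. Two small polish points: the Lagrange--B\"urmann step as you state it requires $n\geq 1$ (the case $n=0$ is trivial and should be noted), and the derivation of $S_n\boxplus_n B=H(D)^{n+1}\varphi'(D)B$ deserves to be written out, since the passage from $S_n\boxplus_n B=\sum_{k=0}^n\bigl([z^n]\phi(z)^k\bigr)D^{n-k}B$ through the identity $H^n-\frac{w}{n}(H^n)'=H^{n+1}\varphi'$ is the crux of your whole argument.
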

It would be very interesting to know if this identity has a probabilistic interpretation (see \cite{Markus}).
\par \medskip
For the last part of our study, let us highlight that until now, the base field could have been any characteristic zero field. However, the story takes on a flavor of complex analysis. Indeed, in the classical case, the polarity pairing is related to a celebrated theorem of Grace. To state it, recall that a circular region is an open or closed connected subset of the complex plane whose boundary is a circle or a line.
\begin{theorem}[{Grace theorem \cite{Grace1902}, see \cite[Thm. 3.4.1]{Rahman}}]
Let $A$ and $B$ be two $n$-apolar polynomials of degree $n$ Then every circular domain containing all roots of $A$ contains at least one root of $B$. 
\end{theorem}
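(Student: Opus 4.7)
The plan is a proof by contradiction, reducing to Laguerre's classical theorem on polar derivatives after a suitable $SL_2(\mathbb{C})$-normalization. Recall that the polar derivative of a polynomial $P$ of degree $n$ with respect to a point $\zeta \in \mathbb{C}$ is
\[
D_\zeta P(z) = n P(z) - (z - \zeta) P'(z),
\]
a polynomial of degree at most $n-1$, and that Laguerre's theorem asserts the following: if $P$ has no zero in a closed circular region $D$ and $\zeta \in D$, then $D_\zeta P$ also has no zero in $D$. Suppose for contradiction that the circular region $D$ contains all zeros $\alpha_1, \ldots, \alpha_n$ of $A$ but no zero of $B$.

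The key identity, provable by induction on $n$ or directly on the monomial basis $\{z^k\}_{0 \leq k \leq n}$, is
\[
\langle A, B \rangle_n \,=\, c_A \cdot D_{\alpha_n} D_{\alpha_{n-1}} \cdots D_{\alpha_1} B,
\]
where $c_A$ denotes the leading coefficient of $A$. For instance, when $B(z) = z^k$, both sides evaluate to $c_A \cdot k!\,(n-k)!\,e_k(\alpha_1,\ldots,\alpha_n)$. The right-hand side is indeed a constant, since each successive polar derivative lowers the degree by exactly one when applied to a polynomial of the correct degree. Once this identity is in hand, I would iterate Laguerre's theorem: starting from $B$, which by hypothesis has no zero in $D$, each application of $D_{\alpha_k}$ with $\alpha_k \in D$ produces a polynomial of one lower degree still having no zero in $D$. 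After $n$ steps, the construction yields a nonzero constant, whence $\langle A, B \rangle_n \neq 0$, contradicting apolarity.

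To apply Laguerre's theorem, which is most cleanly stated for a closed disk, I would first use the $SL_2(\mathbb{C})$-equivariance of the polarity pairing recalled in the introduction to reduce to the case where $D$ is the closed unit disk. Since any two proper circular regions of $\mathbb{P}^1(\mathbb{C})$ are conjugate under a M\"obius transformation, one can choose $g \in SL_2(\mathbb{C})$ mapping $D$ to the closed disk while avoiding the finitely many zeros of $A$ and $B$ (a generic choice works); then $g \cdot A$ and $g \cdot B$ remain apolar polynomials of degree exactly $n$ and satisfy the relocated hypotheses. The main obstacle is precisely this analytic normalization: one must be careful that the M\"obius change of variables preserves degrees, which fails exactly when a zero is sent to infinity, and one must also accommodate the case where $D$ is unbounded (its boundary being a line), for which the transformation to the disk is essential. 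A secondary technical point is the bookkeeping of degrees in the iterated polar derivative identity, as the integer appearing in the formula for $D_\zeta$ must decrement consistently with the degree at each step.
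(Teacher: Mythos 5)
The paper offers no proof of this statement: Grace's theorem is quoted as classical background, with the argument delegated to the cited references (Grace 1902; Rahman--Schmeisser, Thm.~3.4.1), and the proof given there is exactly the one you propose. Your proposal is correct and matches that standard route --- the identity $\langle A,B\rangle_n = c_A\, D_{\alpha_n}\cdots D_{\alpha_1}B$ with the degree parameter decremented at each step (your check on monomials, $\langle A, z^k\rangle_n = c_A\,k!\,(n-k)!\,e_k(\alpha_1,\dots,\alpha_n)$ with $e_k$ the elementary symmetric function, is right for the paper's normalization of the pairing), iterated applications of Laguerre's theorem, and a M\"obius normalization to handle unbounded circular regions and degree drops at infinity.
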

It is a primary tool to study root localization of complex polynomials. An equivalent version, due to Walsh, runs as follows:
\begin{theorem}[Walsh representation theorem {\cite[Thm. VI]{Walsh}, \cite[Thm. 3.4.1c and Thm. 5.3.1]{Rahman}}] \label{Walsh}
If $P$ and $Q$ are polynomials of degree $n$ and if $\Omega$ is a  circular domain of $\mathbb{C}$ containing all roots of $Q$, then each root of $P \boxplus_n Q$ is of the form $z+\omega$ where $z$ is a root of $P$ and $\omega$ lies in $\Omega$. 
\end{theorem}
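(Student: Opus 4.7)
The plan is to derive Walsh's representation theorem as a direct consequence of Grace's apolarity theorem stated just above, using the identity $A \boxplus_n B(z) = \tfrac{1}{n!}\langle A(t), B(z-t)\rangle_n$ that links additive convolution to the polarity pairing.

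The first step is a short algebraic identification: I would show that a point $\zeta \in \mathbb{C}$ is a root of $P \boxplus_n Q$ if and only if the two degree-$n$ polynomials $P(t)$ and $Q_{\zeta}(t) := Q(\zeta - t)$ are $n$-apolar in the classical sense. Setting $t = 0$ in $\langle P(t), Q(\zeta-t)\rangle_n$, the factor $(-1)^{n-k}$ in the definition of the pairing is exactly canceled by the $(-1)^{n-k}$ coming from the chain rule applied to $Q(\zeta-t)$, and the sum collapses to $n!\,(P \boxplus_n Q)(\zeta)$. This step implicitly uses that $\langle A, B\rangle_n$ is a constant whenever $\deg A, \deg B \leq n$, which one verifies by differentiating the defining sum term by term and observing that it telescopes.

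The second step is a geometric application of Grace's theorem. If $\omega_1, \ldots, \omega_n \in \Omega$ are the roots of $Q$, then $Q_{\zeta}$ has roots $\zeta - \omega_1, \ldots, \zeta - \omega_n$, which all lie in the reflected-and-translated set $\zeta - \Omega$; this set is again a circular region, since affine maps $z \mapsto \zeta - z$ preserve the class of circular regions. Applying Grace's theorem to the apolar pair $(P, Q_{\zeta})$ with the circular domain $\zeta - \Omega$ containing all roots of $Q_{\zeta}$ yields a root $z$ of $P$ satisfying $z \in \zeta - \Omega$. Writing $z = \zeta - \omega$ for some $\omega \in \Omega$ and rearranging gives $\zeta = z + \omega$, which is the desired representation.

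The main obstacle is really only bookkeeping in the first step: one must pin down the sign conventions to confirm that the vanishing of $P \boxplus_n Q$ at $\zeta$ is genuinely equivalent to the classical $n$-apolarity of $P$ and $Q_{\zeta}$, and check that $\langle P(t), Q(\zeta - t)\rangle_n$ may be evaluated at any convenient value of $t$ so that this identification is unambiguous. Once the equivalence is in place, the theorem follows in one line from Grace's theorem together with the affine invariance of circular regions.
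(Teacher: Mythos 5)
Your argument is correct: identifying a root $\zeta$ of $P \boxplus_n Q$ with classical $n$-apolarity of $P(t)$ and $Q(\zeta-t)$ (legitimate because the pairing $\langle \cdot\,,\cdot\rangle_n$ is constant in $t$ by telescoping, and because $Q(\zeta-t)$ again has degree $n$), and then applying Grace's theorem to the circular region $\zeta-\Omega$, which is circular since $z \mapsto \zeta - z$ is affine, is precisely the standard derivation. The paper itself gives no proof --- it states the result as classical, citing Walsh and Rahman--Schmeisser, where it is obtained from Grace's theorem in exactly this way, and frames it as ``an equivalent version'' of Grace --- so your proposal supplies that implication along the intended route; the only point worth making explicit is that apolarity is symmetric (the pairing is symmetric up to the factor $(-1)^n$), which licenses applying Grace with the roles $A=Q(\zeta-t)$, $B=P$.
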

It is natural to adapt these theorems to the umbral setting. This is achievable provided the roots of the deviation polynomials are controlled. We explain how to do that in the case of finite differences. 
\begin{theorem}
Let us consider the delta operator $\Delta$, which is attached to the binomial sequence $((t)_n)_{n \geq 0}$. Then 
all roots of the deviation polynomial $R_n(t)$ lie in the vertical strip $\mathrm{Re}(z)=-\dfrac{n-1}{2}\cdot$
\end{theorem}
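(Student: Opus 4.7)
First I would specialize the preceding inversion theorem to $\mathfrak{d}=\Delta=e^{\partial_t}-1$: here $\varphi(z)=e^z-1$, so $z\,\mathrm{D}_{\log}\varphi(z)=ze^z/(e^z-1)$, and the associated Appell sequence has generating series $e^{tz}(1-e^{-z})/z$. A direct integration gives $A_n(t)=\int_{t-1}^t u^n\,du$, and the inversion formula then yields $R_n(t)=(\Delta\,\partial_t^{-1})^n A_n(t)$. The next step is to interpret this operator probabilistically: since $\Delta\,\partial_t^{-1}=(e^{\partial_t}-1)/\partial_t$ acts on polynomials as the averaging operator $P\mapsto\int_0^1 P(\cdot+s)\,ds$, iterating $n$ times and folding in the analogous representation of $A_n$ produces
\[
R_n(t)\;=\;\mathbb{E}\!\left[\left(t+\tfrac{n-1}{2}+X\right)^{\!n}\right],
\]
where $X=U_0+U_1+\cdots+U_n$ is a centered Irwin--Hall sum of $n+1$ independent uniform variables on $[-1/2,1/2]$.

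Setting $\tilde R_n(z):=R_n(z-\tfrac{n-1}{2})=\mathbb{E}[(z+X)^n]$, the theorem reduces to the claim that $\tilde R_n$ has only purely imaginary zeros. I would rewrite $\tilde R_n(z)=\phi_X(\partial_z)\,z^n$, where $\phi_X(s)=\mathbb{E}[e^{sX}]=\bigl(2\sinh(s/2)/s\bigr)^{n+1}$ is the moment generating function of $X$, and then pass to the rotated companion
\[
\psi(s)\;:=\;\phi_X(is)\;=\;\left(\frac{2\sin(s/2)}{s}\right)^{\!n+1}.
\]
One checks directly that $\psi$ belongs to the Laguerre--P\'olya class: it is entire of order one, real on $\mathbb{R}$, and all its zeros are real (located at $\pm 2k\pi$, $k\ge 1$, each of multiplicity $n+1$).

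The finisher is the classical Hermite--Poulain--P\'olya theorem, asserting that $\psi(\partial_z)$ preserves real-rootedness of polynomials. Applied to the monomial $z^n$ it gives that $\psi(\partial_z)\,z^n$ has only real zeros. A short computation, using $\mathbb{E}[X^{2k+1}]=0$ and $\psi^{(j)}(0)=i^j\,\mathbb{E}[X^j]$, establishes the identity $\psi(\partial_z)\,z^n=i^{-n}\,\tilde R_n(iz)$; hence every zero of $\tilde R_n$ is of the form $iz$ with $z\in\mathbb{R}$, so all zeros of $\tilde R_n$ lie on the imaginary axis and all zeros of $R_n$ lie on the line $\mathrm{Re}(z)=-\tfrac{n-1}{2}$. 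The main obstacle is the second step: transforming the iterated operator $(\Delta\partial^{-1})^n A_n$ into a clean probabilistic expression whose moment generating function, after the rotation $s\mapsto is$, falls exactly into the Laguerre--P\'olya class. Once this identification is made, the zero localization is an immediate consequence of classical P\'olya--Schur theory.
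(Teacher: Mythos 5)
Your proof is correct, but its second half takes a genuinely different route from the paper's. Both arguments share the same starting point: via the inversion theorem (Theorem \ref{lagrange}, specialized as in Example \ref{yoo}(i)) one gets $A_n(t)=\Lambda t^{n+1}/(n+1)$ and $R_n=(\Delta\,\partial_t^{-1})^n A_n$. The paper then stays inside its convolution formalism: it reads this identity as the iterated additive convolution
\[
R_n=\underbrace{\tfrac{\Delta t^{n+1}}{n+1}\boxplus_n\cdots\boxplus_n\tfrac{\Delta t^{n+1}}{n+1}}_{n\ \text{times}}\boxplus_n\tfrac{\Lambda t^{n+1}}{n+1},
\]
notes that each factor $\Delta t^{n+1}$ has all its roots on the line $\mathrm{Re}(z)=-\tfrac12$ and $\Lambda t^{n+1}$ on $\mathrm{Re}(z)=\tfrac12$, and invokes the classical Walsh representation theorem (Theorem \ref{Walsh}) with the two complementary closed half-planes bounded by these lines as circular domains; the two one-sided bounds combine to pin the roots exactly on $\mathrm{Re}(z)=-\tfrac{n-1}{2}$. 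You instead convert $(\Delta\,\partial_t^{-1})^n A_n$ into the expectation $\mathbb{E}\bigl[\bigl(t+\tfrac{n-1}{2}+X\bigr)^n\bigr]$ over a centered Irwin--Hall variable --- a step that is easier than you fear, since $\Delta\,\partial_t^{-1}$ is averaging over $[0,1]$ and $A_n(t)=\int_{t-1}^t u^n\,du$, so the iterate is literally an expectation over a sum of $n+1$ independent uniforms --- and then finish with the rotation $s\mapsto is$ and the Hermite--Poulain--P\'olya theorem applied to the Laguerre--P\'olya function $\bigl(2\sin(s/2)/s\bigr)^{n+1}$; your parity bookkeeping using $\mathbb{E}[X^{2k+1}]=0$ is what makes the identity $\psi(\partial_z)z^n=i^{-n}\tilde R_n(iz)$ correct (the two sides carry $i^{k}$ and $i^{-k}$ respectively, which agree only because odd moments vanish), and it also guarantees that $\psi(\partial_z)z^n$ is a real polynomial, as the theorem requires. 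What the paper's route buys is economy: it reuses Grace--Walsh, which is needed anyway for the umbral Grace theorem, and the same half-plane argument recurs there. What your route buys is the explicit probabilistic representation of $R_n$ --- which, for $\Delta$, in effect answers the question raised in the introduction about a probabilistic interpretation of the inversion identity --- together with a direct bridge to P\'olya--Schur theory, in the spirit of the real-rootedness results of \cite{PS} and \cite{Leake}.
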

As a corollary, we get:
\begin{theorem}[Umbral Grace theorem for $\Delta$]
Let $P$ and $Q$ be two polynomials of degree $n$ such that  $\langle P, Q \rangle_n^{\Delta} =0$. Then 
\[
V(P) \cap \left(V(Q)+\frac{n-1}{2}\right)  \neq \emptyset.
\]
\end{theorem}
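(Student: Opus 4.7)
The plan is to reduce the umbral apolarity statement to a classical Grace/Walsh-type statement via the deviation polynomial $R_n$, making crucial use of the localization of $V(R_n)$ on the line $L=\{\mathrm{Re}(z)=-\tfrac{n-1}{2}\}$ established in the previous theorem.

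The first step is to translate the hypothesis $\langle P,Q\rangle_n^{\Delta}=0$ into the vanishing of an additive convolution at $z=0$: by the very definition of the umbral additive convolution,
\[
n!\,(P\boxplus_n^{\Delta} Q^{\dagger})(0) \;=\; \langle P(t), Q^{\dagger}(-t)\rangle_n^{\Delta} \;=\; \langle P,Q\rangle_n^{\Delta},
\]
so the hypothesis becomes $0\in V(P\boxplus_n^{\Delta} Q^{\dagger})$. Next, using the formulas developed in the earlier sections expressing the umbral additive convolution in terms of its classical counterpart via the deviation polynomial (so that $R_n$ acts as ``inverse of $S_n=(t)_n$''), the polynomial $P\boxplus_n^{\Delta} Q^{\dagger}$ is rewritten as a classical additive convolution of $R_n$, $P$ and $Q^{\dagger}$; the vanishing at $z=0$ then becomes a classical apolarity statement of the form $\langle R_n\boxplus_n P,\,Q\rangle_n=0$.

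At this point the classical Grace and Walsh theorems can be invoked. By the preceding theorem one has $V(R_n)\subset L$; applying the classical Walsh representation theorem with the two closed half-planes bounded by $L$ (each a circular domain containing $V(R_n)$) and intersecting, one obtains that every root of $R_n\boxplus_n P$ lies in $V(P)+L$. Applying the classical Grace theorem to the apolar pair $(R_n\boxplus_n P,\,Q)$, and combining with the standard fact that any finite subset of $\mathbb{C}$ equals the intersection of all circular domains containing it (proved via exteriors of arbitrarily small disks), one extracts a compatible decomposition $q=p+l$ with $p\in V(P)$, $q\in V(Q)$ and $l\in L$. Rearranging, $p-q=-l$ lies on $-L=\{\mathrm{Re}(z)=\tfrac{n-1}{2}\}$, which is precisely the claimed non-empty intersection $V(P)\cap\bigl(V(Q)+\tfrac{n-1}{2}\bigr)$.

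The main obstacle I expect is the double squeezing argument needed to turn the Walsh-type conclusions (phrased in terms of arbitrary circular domains) into assertions about the specific finite sets $V(P)$, $V(Q)$ and the line $L$. Both squeezes rely on the same lemma ($F=\bigcap\Omega$ for finite $F$), but they have to be orchestrated so that the choices of $p\in V(P)$, $q\in V(Q)$ and $l\in L$ produced in the limit are mutually consistent. A clean route is probably to work with the polarized (multi-affine) form of the classical additive convolution, in the spirit of Walsh's coincidence theorem, which makes the required consistency of the three decompositions automatic.
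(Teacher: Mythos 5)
Your first two steps coincide with the paper's: the hypothesis is converted, via the deviation polynomial (Proposition \ref{devv}), into the classical apolarity $\langle R_n \boxplus_n P, Q\rangle_n = 0$, and the localization $V(R_n)\subset L$ from Theorem \ref{demon} is then fed into the classical Grace and Walsh theorems applied to half-planes bounded by $L$. Up to that point you are following the paper's argument exactly.

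The final ``double squeezing'' step, however, is a genuine gap, and the statement it aims at is false. You read $V(P)$ and $V(Q)$ as the finite sets of roots and try to extract an \emph{exact} decomposition $q=p+l$ with $p$ a root of $P$, $q$ a root of $Q$, $l\in L$, i.e.\ roots satisfying $\mathrm{Re}(p)=\mathrm{Re}(q)+\frac{n-1}{2}$ on the nose. The paper's own remark after Theorem \ref{hammer} is a counterexample: for $P(t)=t^2+(2c-1)t-c$ and $Q(t)=t^2$ with $n=2$ one has $\langle P,Q\rangle_2^{\Delta}=0$ and $V(Q)+\frac12$ is the line $\mathrm{Re}(z)=\frac12$, yet $P\bigl(\frac12\bigr)=-\frac14\neq 0$ and both roots of $P$ are real, so no root of $P$ has real part $\frac12$. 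The intersection lemma you invoke ($F=\bigcap\Omega$ over circular domains $\Omega\supseteq F$) cannot repair this: Grace produces, for each circular domain containing the roots of one polynomial, \emph{some} root of the other inside it, but that root varies with the domain, and passing to the intersection would require a single root lying in all such domains simultaneously --- that is, a common root, which apolarity does not give; no polarized/coincidence-theorem bookkeeping makes the three choices consistent, since consistency fails in the example above. Note that in the paper $V(P)$ is by definition the smallest \emph{vertical strip} containing the roots, not the root set, and this is what saves the theorem: the paper runs the half-plane argument twice, one-sidedly, obtaining a root of $P$ with real part $\leq b+\frac{n-1}{2}$ and a \emph{possibly different} root with real part $\geq a+\frac{n-1}{2}$ (where $V(Q)=V(a,b)$), and concludes by connectedness of the strip $V(P)$; the paper explicitly remarks that ``this root need not be the same for both half-planes.'' Your Walsh half is sound at the level of strips (it yields $V(R_n\boxplus_n P)\subseteq V(P)-\frac{n-1}{2}$ as strips), so if you likewise run Grace one-sidedly with the two half-planes $\{\mathrm{Re}\,(z)\leq b\}$ and $\{\mathrm{Re}\,(z)\geq a\}$ and then argue with strips rather than root sets, your proof closes and becomes the paper's.
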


\begin{theorem}[Umbral Walsh theorem for $\Delta$]
Assume that $P$ and $Q$ are two polynomials of degree $n$. Then 
\[
V(P \boxplus_n^{\mathfrak{d}} Q) \subseteq V(P)+V(Q)-\left(\dfrac{n-1}{2}\right)\cdot
\]
\end{theorem}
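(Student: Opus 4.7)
The plan is to derive the umbral Walsh theorem from the umbral Grace theorem for $\Delta$ (stated immediately above), mimicking the classical implication from Grace to Walsh.

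First I would fix an arbitrary $w_0 \in V(P \boxplus_n^{\Delta} Q)$ and rewrite the vanishing condition via the umbral polarity pairing. By the very definition of umbral additive convolution,
\[
0 \;=\; (P \boxplus_n^{\Delta} Q)(w_0) \;=\; \frac{1}{n!} \bigl\langle P(t),\, Q(w_0 - t)\bigr\rangle_n^{\Delta},
\]
so that $P(t)$ and $R(t) := Q(w_0 - t)$ are umbrally apolar polynomials of degree $n$ in $\mathbb{C}_n[t]$.

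Next I would apply the umbral Grace theorem for $\Delta$ to the pair $(P, R)$, obtaining
\[
V(P) \,\cap\, \Bigl(V(R) + \tfrac{n-1}{2}\Bigr) \,\neq\, \emptyset.
\]
Since $V(R) = \{w_0 - \beta : \beta \in V(Q)\}$, some element of this intersection can be written $\alpha = w_0 - \beta + \tfrac{n-1}{2}$ with $\alpha \in V(P)$ and $\beta \in V(Q)$. Rearranging gives $w_0 = \alpha + \beta - \tfrac{n-1}{2} \in V(P) + V(Q) - \tfrac{n-1}{2}$, and because $w_0$ was arbitrary, the asserted containment follows.

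The hard part is not this logical passage, which is a clean two-line reduction; the substantive work has already been carried out earlier in the paper. One needs in place (i) the pairing presentation $A \boxplus_n^{\Delta} B(z) = \frac{1}{n!}\langle A(t), B(z-t)\rangle_n^{\Delta}$ together with the $t$-invariance of the right-hand side, which is the umbral analog of the classical telescoping and rests on the compatibility of the conjugate pair $\Delta$ and $\Delta^* = \nabla$; and (ii) the umbral Grace theorem itself, whose nontrivial input is the preceding root-location result confining the zeros of the deviation polynomial $R_n$ to the vertical line $\mathrm{Re}(z) = -\tfrac{n-1}{2}$. Once both ingredients are invoked, the umbral Walsh theorem reduces to the elementary root-set manipulation above.
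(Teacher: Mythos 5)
Your proposal is correct and takes essentially the same route as the paper: fix a root $w_0$ of $P \boxplus_n^{\Delta} Q$, use the pairing presentation $n!\,(P \boxplus_n^{\Delta} Q)(w_0) = \langle P(t), Q(w_0-t)\rangle_n^{\Delta}$ to get umbral apolarity of $P(t)$ and $Q(w_0-t)$, then apply the umbral Grace theorem together with $V(Q(w_0-\cdot)) = w_0 - V(Q)$ to conclude $w_0 \in V(P)+V(Q)-\tfrac{n-1}{2}$. The only cosmetic slip is saying you fix $w_0 \in V(P \boxplus_n^{\Delta} Q)$ when you mean a root of $P \boxplus_n^{\Delta} Q$; since the target set is itself a vertical strip, containment of all roots does give containment of the strip, exactly as in the paper.
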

This last theorem is somewhat complementary to the results of \cite{PS} and \cite{Leake}, that focus on real-rooted polynomials. 
\par \medskip
Finally a few philosophical remarks. Writing an article on the foundations of umbral calculus without being a combinatorialist in not easy. Furthermore, we have used neither exponentials nor generating series except in \S \ref{inversion}, which is certainly sacrilegeous. We hope for the indulgence of the experts on this matter. Our primary goal was to understand the	 algebraic structures behind the localization of the roots of some polynomials and their relation to stability theory, in the spirit of \cite{Sokal} and \cite{BB}. We hope that the results presented in this article can convince interested readers that umbral calculus and the algebraic structures underlying it are an efficient tool to tackle some problems related to complex polynomials.

\section{Umbral calculus in finite height}
All matrices will be of size $(n+1)\times (n+1)$, with row and column indices in $\llbracket 0, n \rrbracket$. We denote by $\mathrm{T}_{n+1}(\mathbb{C})$  (resp. $\mathrm{U}_{n+1}(\mathbb{C})$ the group of inversible (resp. unipotent) lower-triangular matrices.

\subsection{Binomial convolution}
Let $R$ be any commutative $\mathbb{Q}$-algebra. To any finite sequence $(a_k)_{0 \leq k \leq n}$ of elements of $R$, we associate the matrix $M(a)$ in $\mathrm{M}_{n+1}(R)$ given by
\[
(M(a))_{ij}=\binom{i}{j}a_{i-j},
\]
Let $N$ be the lower-triangular $(n+1) \times (n+1)$ nilpotent matrix given by $N_{ij}=i \delta_{i-j-1}$. 
\begin{lemma}
The commutant of $N$ is a commutative subalgebra of $\mathrm{M}_{n+1}(R)$ consisting of matrices of the type $M(a)$.
\end{lemma}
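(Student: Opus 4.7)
The plan is to argue both inclusions separately: first that every $M(a)$ commutes with $N$, and then that nothing else does, by exploiting a cyclic vector.

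First I would compute $M(a)N$ and $NM(a)$ entry-wise. Using $N_{k\ell}=k\,\delta_{k-\ell-1}$, one directly gets
\[
(M(a)N)_{ij} = \binom{i}{j+1}(j+1)\,a_{i-j-1}, \qquad (N M(a))_{ij} = i\binom{i-1}{j}a_{i-1-j},
\]
and the elementary identity $(j+1)\binom{i}{j+1}=i\binom{i-1}{j}$ gives equality. This shows $M(a)$ lies in the commutant $\mathcal{C}(N)$ for every $a$, and the assignment $a\mapsto M(a)$ is clearly $R$-linear and injective, so the space of matrices of type $M(a)$ is a free $R$-submodule of $\mathcal{C}(N)$ of rank $n+1$.

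For the reverse inclusion, the key observation is that $e_0$ is a cyclic vector for $N$: since $Ne_j=(j+1)e_{j+1}$, one has $N^k e_0 = k!\,e_k$ for $0\le k\le n$. Given $A\in \mathcal{C}(N)$, define $a_i=(Ae_0)_i$; I would then show $A=M(a)$ by evaluating both sides on each basis vector $e_k=\tfrac{1}{k!}N^k e_0$:
\[
Ae_k = \tfrac{1}{k!}AN^k e_0 = \tfrac{1}{k!}N^k A e_0 = \tfrac{1}{k!}N^k M(a)e_0 = \tfrac{1}{k!}M(a)N^k e_0 = M(a)e_k,
\]
where $M(a)e_0=\sum_i a_i e_i$ is immediate from the definition, and I use that $M(a)$ already commutes with $N$ (from the first step). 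This forces $A=M(a)$, so $\mathcal{C}(N)$ coincides with the $M(a)$'s.

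Finally, for commutativity, one option is to invoke the standard fact that the commutant of an endomorphism with a cyclic vector coincides with its bicommutant, hence equals $R[N]$ and is therefore abelian. Alternatively — and this is more illuminating because it foreshadows binomial convolution used later in the paper — I would compute $M(a)M(b)$ directly: using the Vandermonde-type identity $\binom{i}{k}\binom{k}{j}=\binom{i}{j}\binom{i-j}{k-j}$ and reindexing with $m=k-j$, one finds
\[
(M(a)M(b))_{ij} = \binom{i}{j}\sum_{m=0}^{i-j}\binom{i-j}{m}a_{i-j-m}b_m = M(a\ast b)_{ij},
\]
where $a\ast b$ is the binomial convolution. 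Symmetry of $a\ast b$ in $a$ and $b$ yields both commutativity and stability under multiplication. The only minor subtlety — which I would flag but is not a genuine obstacle — is that the identities $(j+1)\binom{i}{j+1}=i\binom{i-1}{j}$ and the Vandermonde identity hold in any commutative $\mathbb{Q}$-algebra, so the argument is valid over $R$.
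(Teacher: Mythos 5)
Your proof is correct, but it takes a more hands-on route than the paper, which disposes of the lemma in two lines: $N$ is similar to a principal Jordan block (conjugate by $\mathrm{diag}(0!,1!,\dots,n!)$), so by the classical fact about nonderogatory matrices its commutant is the polynomial algebra $R[N]$, which is commutative; one then observes $N^p = p!\,M(e_p)$, so that $R[N]$ is exactly the span of the $M(a)$'s. Your argument unpacks this: your cyclic-vector step (with $N^k e_0 = k!\,e_k$) is in effect a direct proof of the very fact the paper cites, your entrywise check that $M(a)N = NM(a)$ replaces the observation $N^p \propto M(e_p)$, and your identity $M(a)M(b)=M(a\ast b)$ actually subsumes the paper's \emph{next} lemma, which only computes the first column $(M(a)M(b))_{m0}=(a\star b)_m$ and is stated separately. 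What each approach buys: the paper's is shorter and conceptual; yours is self-contained, valid verbatim over any commutative $\mathbb{Q}$-algebra $R$ without appealing to Jordan theory over a field, and delivers the convolution isomorphism as a byproduct. One small mis-attribution to fix: the binomial identities $(j+1)\binom{i}{j+1}=i\binom{i-1}{j}$ and Vandermonde hold over $\mathbb{Z}$ and need no hypothesis on $R$; the place where the $\mathbb{Q}$-algebra assumption is genuinely used is your division by $k!$ in the cyclic-vector step (just as the paper implicitly uses it to invert $\mathrm{diag}(k!)$), so the flag belongs there.
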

\begin{proof}
The matrix $N$ is similar to a principal Jordan block so its commutant is $\mathbb{C}[N]$. Then we remark that for $0 \leq p \leq n+1$, $N^p$ is proportional to $M(e_p)$.
\end{proof}

\begin{definition} The binomial convolution of to sequences finite sequences $a=(a_k)_{0 \leq k \leq n}$ and $b=(b_k)_{0 \leq k \leq n}$ is the sequence $c=(c_k)_{0 \leq k \leq n}$ defined by
\[
c_m=\sum_{k=0}^m \binom{m}{k} a_k b_{n-k}.
\]
We denote this by $c=a \star b$. 
\end{definition}

\begin{lemma}
The map $a \mapsto M(a)$ gives an isomorphism of commutative algebras between finite sequences endowed with binomial convolution and the commutant of $N$. Besides, for any sequences $a$ and $b$, $M(a).b=a\star b$.
\end{lemma}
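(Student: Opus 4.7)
The plan is to verify three things in sequence. First, the map $a \mapsto M(a)$ is manifestly $R$-linear and, by the first coordinate $a_0 = M(a)_{00}$ together with the recursion $a_k = M(a)_{k0}$, also injective. By the preceding lemma, its image is exactly the commutant of $N$, so we already have an $R$-linear bijection onto a commutative subalgebra. It remains to show multiplicativity $M(a)M(b) = M(a\star b)$ and the action formula $M(a)\cdot b = a\star b$; everything else (commutativity, associativity of $\star$) will then follow from the analogous property of the commutant.

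Second, I would compute $M(a)M(b)$ entrywise. For $i \geq j$,
\[
(M(a)M(b))_{ij} = \sum_{k=j}^{i} \binom{i}{k}\binom{k}{j}\, a_{i-k}\, b_{k-j}.
\]
The classical trinomial identity $\binom{i}{k}\binom{k}{j}=\binom{i}{j}\binom{i-j}{k-j}$ factors out $\binom{i}{j}$, and the change of variable $\ell = k-j$ converts the inner sum into
\[
\binom{i}{j}\sum_{\ell=0}^{i-j}\binom{i-j}{\ell}a_{(i-j)-\ell}b_{\ell} \;=\; \binom{i}{j}(a\star b)_{i-j} \;=\; M(a\star b)_{ij}.
\]
This is the core calculation and is where I expect any mild difficulty to lie, though it is really just the binomial identity packaged correctly. (In particular, since the commutant of $N$ is already known to be commutative, this immediately yields that $\star$ is commutative and associative, with $(1,0,\dots,0)$ as unit, so the algebra structure on sequences is consistent.)

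Third, the action identity is obtained by the same computation specialized to the column case. Treating $b$ as a column vector,
\[
(M(a)\cdot b)_i \;=\; \sum_{j=0}^{i}\binom{i}{j}a_{i-j}b_j \;=\; (a\star b)_i,
\]
which matches the definition of binomial convolution. Combined with step two, this shows that $a\mapsto M(a)$ is the asserted algebra isomorphism and that left multiplication by $M(a)$ on column vectors implements binomial convolution by $a$. The only genuine content is the trinomial identity; the rest is bookkeeping on the already-established commutant description of the previous lemma.
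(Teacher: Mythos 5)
Your proof is correct, and it takes a recognizably different logical route from the paper's, which is a single line: the paper computes only the zeroth column, $(M(a)M(b))_{m0}=\sum_{k=0}^m\binom{m}{k}a_{m-k}b_k=(a\star b)_m$, and this suffices because the preceding lemma does the structural work --- $M(a)$ and $M(b)$ lie in the commutant of $N$, the commutant is an algebra consisting exactly of matrices of type $M(c)$, and any such matrix is determined by its zeroth column since $M(c)_{k0}=c_k$; so the product is automatically of the form $M(c)$ and one only needs to read off $c$. You instead verify $M(a)M(b)=M(a\star b)$ at \emph{every} entry via the trinomial identity $\binom{i}{k}\binom{k}{j}=\binom{i}{j}\binom{i-j}{k-j}$, invoking the preceding lemma only to identify the image of $a\mapsto M(a)$ with the commutant. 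What your route buys is self-containedness: multiplicativity, hence commutativity and associativity of $\star$ with unit $e_0$, drop out of the entrywise computation without needing to know in advance that the commutant is closed under multiplication or that its elements are determined by their first column; your argument would survive even if the commutant lemma were weakened. What the paper's route buys is brevity: one line and no trinomial identity, at the price of leaning on the previous lemma. Note that the action formula $M(a).b=a\star b$ is in both treatments the same computation --- in yours it is the $j=0$ specialization you record at the end, and in the paper that single column-zero identity \emph{is} the entire proof, serving double duty. Your small bookkeeping claims (injectivity from $M(a)_{k0}=a_k$, surjectivity onto the commutant from the previous lemma) are all sound.
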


\begin{proof}
$
(M(a)M(b))_{m0}=\sum_{k=0}^m \binom{m}{k}a_{m-k}\binom{k}{0}b_k=(a \star b)_m$.
\end{proof}
By a slight abuse of language, we will say that the commutant of $N$ is the convolution algebra of $R$. 
\begin{definition} The functor attaching to any $\mathbb{Q}$-algebra $R$ the group of invertible matrices in the convolution algebra of $R$ (that is matrices of the form $M(a)$ for $(a_i)_{0 \leq i \leq n+1}$ in $R^{n+1}$ such that $a_0 \in R^{\times}$) defines an affine algebraic group $\mathscr{C}onv$ over $\mathbb{Q}$, which is a subgroup of $\mathrm{T}_{n+1}$.
\end{definition}

\begin{remark}
We have $e(t+a)=e(a) \star e(t)$. Hence $e(a)^{-1}=e(-a)$, which gives Pascal inversion formula for $a=1$.
\end{remark}

\subsection{Vandermonde curves}
In combinatorics, a sequence of polynomials is a sequence $(P_k)_{k \geq 0}$ such that each $P_k$ has degree $k$. We will consider a finite version of this concept, which we call a Vandermonde curve. 
\par \medskip
\begin{definition} Let $n$ be a nonnegative integer. A Vandermonde curve with values in $\mathbb{C}^{n+1}$ is a map $\theta \colon \mathbb{C}  \rightarrow \mathbb{C}^{n+1}$ of the form
\[
\theta(t) = \begin{pmatrix}
P_0(t) \\ 
P_1(t) \\ 
\vdots \\ 
P_n(t)
\end{pmatrix} 
\]
where each $P_k$ is a polynomial of degree $k$.
\end{definition}

\begin{remark}
We could replace in this definition the complexe line $\mathbb{C}$ by the affine line $\mathbb{A}^1$, that is not specifying any origin in the source of the curve. Conceptually, this viewpoint is perfectly legitimate. However, the use of an affine coordinate $t$ (hence the choice of a basepoint) is essential to perform calculations and lies at the heart of umbral calculus, because binomial sequences need an origin. We will therefore work on $\mathbb{C}$, but we will emphasize when notions involving a Vandermonde curve $\theta$ only depend only on the underlying affine curve $\theta^{\mathrm{aff}} \colon \mathbb{A}^1 \rightarrow \mathbb{C}$.
\end{remark}

The set of Vandermonde curves is a torsor under the action of the group $\mathrm{T}_{n+1}(\mathbb{C})$. Associated with the choice of origin on $\mathbb{A}^1$ is a distinguished Vandermonde curve, namely
\[
e(t)=\begin{pmatrix}
1 \\ 
t \\ 
\vdots \\ 
t^n
\end{pmatrix} 
\]
that allows to identify the set of Vandermonde curves with $\mathrm{T}_{n+1}(\mathbb{C})$.
\begin{definition}
If $\theta$ is a Vandermonde curve, the one-parameter subgroup $U_{\theta} \colon \mathbb{C} \rightarrow \mathrm{U}_{n+1}(\mathbb{C})$ is defined by 
$
\theta(t+a)=U_{\theta}(a). \theta(t).
$
We consider $U_{\theta}$ as an element of $U_{n+1}(\mathbb{C}[a])$.
\end{definition}
\begin{remark}
We have $e(t+a)=e(a) \star e(t) = M(e(a)).e(t)$. Hence, if $\theta(t)=T.e(t)$, $\theta(t+a)=TM(e(a)).e(t)$ so $U_{\theta}(a)=TM(e(a))T^{-1}$.
\end{remark}

\begin{definition}
If $\theta=(P_k(t))_{0 \leq k \leq n}$ is a Vandermonde curve, the operator $\Delta_{\theta}$ is the nilpotent endomorphism of $\mathbb{C}_n[t]$ of degree $-1$ having $N$ as matrix in the basis $(P_0(t), \ldots, P_n(t))$. 
\end{definition}
This means that 
\[
\begin{cases}
\textrm{For}\,\,1 \leq k \leq n, 
\Delta_{\theta}(P_k(t))=kP_{k-1}(t),\\ 
\Delta(P_0(t))=0.
\end{cases}
\]
\begin{proposition} \label{devvv}
If $\theta$ is a Vandermonde curve and $v$ is a vector in $\mathbb{C}^{n+1}$ such that $v_0 \neq 0$, then $v \star \theta$ is a Vandermonde curve and $\Delta_{v \star \theta}=\Delta_{\theta}$.
\end{proposition}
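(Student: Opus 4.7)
My plan is a direct computation. Writing $\theta = (P_0, \ldots, P_n)^T$ and $\theta' = v \star \theta = (P'_0, \ldots, P'_n)^T$, the definition of binomial convolution gives
\[
P'_i = \sum_{j=0}^{i} \binom{i}{j}\, v_{i-j}\, P_j.
\]
To see that $\theta'$ is itself a Vandermonde curve, I single out the summand $j=i$, which contributes $v_0 P_i$: this term has degree exactly $i$ since $v_0 \neq 0$ and $\deg P_i = i$, while the remaining summands (with $j<i$) have strictly smaller degree. Hence $\deg P'_i = i$.

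For the equality $\Delta_{\theta'} = \Delta_{\theta}$, I would verify that $\Delta_{\theta}$ already acts on the new basis $(P'_i)$ in the way that characterizes $\Delta_{\theta'}$, namely $\Delta_{\theta}\, P'_i = i\, P'_{i-1}$ for every $i$. Applying $\Delta_{\theta}$ termwise to the expression above, using $\Delta_{\theta} P_j = j P_{j-1}$, and reindexing by $k=j-1$ reduces the identity to be proved to the elementary Pascal-type relation $(k+1)\binom{i}{k+1} = i\binom{i-1}{k}$, which produces exactly the coefficient $\binom{i-1}{k}$ of $v_{(i-1)-k} P_k$ occurring in $i\, P'_{i-1}$.

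Conceptually, one can also read the statement off the matrix picture: the matrix of $\Delta_{\theta}$ in the basis $(P_i)$ is $N$; the passage from $(P_i)$ to $(P'_i)$ is encoded by $M(v)$; and $M(v)$ lies in the commutant of $N$ by the first lemma of the section, so the matrix of $\Delta_{\theta}$ in the basis $(P'_i)$ is again $N$. The only mild obstacle I anticipate is the routine bookkeeping about the direction of the change-of-basis conjugation, which is why I would ultimately present the argument as the short combinatorial computation above: it bypasses that issue entirely.
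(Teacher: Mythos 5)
Your proposal is correct and follows essentially the same route as the paper: the paper's proof is exactly your termwise computation, applying $\Delta_{\theta}$ to $\sum_{k=0}^{m}\binom{m}{k}v_{m-k}P_k$ and reindexing via the identity $k\binom{m}{k}=m\binom{m-1}{k-1}$, which is the same Pascal-type relation you use. Your explicit check that $\deg P'_i = i$ (via the $j=i$ term and $v_0\neq 0$) is a small point the paper leaves implicit, and your commutant-of-$N$ remark is a valid conceptual gloss, but neither changes the substance of the argument.
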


\begin{proof}
We have $(v \star theta(t))_m=\sum_{k=0}^m \binom{m}{k} v_{m-k} P_{k}(t)$ and
\begin{align*}
\Delta_{\theta}\left(\sum_{k=0}^m \binom{m}{k} v_{m-k} P_{k}(t) \right)
&= \sum_{k=1}^m k\binom{m}{k} v_{m-k} P_{k-1}(t) \\
&= \sum_{k=1}^m m\binom{m-1}{k-1} v_{m-k} P_{k-1}(t) \\
&= m \sum_{k=0}^{m-1} \binom{m-1}{k} v_{m-1-k} P_{k}(t)
\end{align*}
\end{proof}
whence the result.
\subsection{Sheffer and binomial curves}

\begin{definition}
A delta operator is an endomorphism $\mathfrak{d}$ of $\mathbb{C}_n[t]$ which is translation invariant, and such $\mathfrak{d}(1)=0$ and $\mathfrak{d}(t) \neq 0$.
\end{definition}
It is easy to see that delta operators are exactly endomorphisms of $\mathbb{C}_n[t]$ of the form
\[
a_1 \frac{\partial}{\partial t} + a_2 \frac{\partial^2}{\partial t^2} + \ldots + a_n \frac{\partial^n}{\partial t^n}
\]
where $a_1 \neq 0$. This is because translation-invariant endomorphisms of $\mathbb{C}_n[t]$ are entirely determined by their image on $t^n$. We now define Sheffer and binomial curves.
\begin{definition} $ $ \par
\begin{enumerate}
\item[--] A Vandermonde curve $\theta$ is a Sheffer curve if the endomorphism $\Delta_{\theta}$ is a delta operator.
\item[--] A Sheffer curve $\eta$ is a binomial curve if $\eta(0)=e_0$.
\end{enumerate}
\end{definition}

\begin{proposition} \label{building}
The following properties hold:
\begin{enumerate}
\item[(i)] A Vandermonde curve $\theta$ is a Sheffer curve if $U_{\theta}$ belongs to $\mathscr{C}onv(\mathbb{C}[a])$.
\item[(ii)] A binomial curve is a Vandrmonde curve $\eta$ satisfying the equation ${\eta}(a+b)={\eta}(a) \star {\eta}(b)$.
\item[(iii)] The map ${\eta} \rightarrow \Delta_{\eta}$ gives a one to one correspondance between binomial curves and delta operators.
\item[(iv)] If $\theta$ is a Sheffer curve and $\eta$ is the associated binomial curve, then
\[
\begin{cases}
\theta(t+a)=\theta(t) \star {\eta}(a) \\
U_{\theta}(a)=M(\eta(a)).
\end{cases}
\]
\item[(v)] Sheffer curves attached to a binomial curve $\eta$ are curves of the form $v \star \eta(t)$
 where $v$ is an invertible sequence in the convolution algebra of $\mathbb{C}$.
\end{enumerate}
\end{proposition}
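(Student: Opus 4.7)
The plan is to prove the five items in order, each building on the previous. For (i), I would translate the three conditions in the definition of delta operator into matrix form. The conditions $\Delta_\theta(1)=0$ and $\Delta_\theta(t)\neq 0$ will follow automatically from the Vandermonde constraints on $P_0$ and $P_1$. The substantive condition is translation invariance, i.e.\ $\Delta_\theta$ commutes with every translation $T_a\colon P(t)\mapsto P(t+a)$. In the basis $(P_0,\ldots,P_n)$, $\Delta_\theta$ has matrix $N$ and $T_a$ has matrix $U_\theta(a)$, so this becomes $[N,U_\theta(a)]=0$ for all $a$. The preceding lemma identifying the commutant of $N$ with the convolution algebra, combined with unipotence of $U_\theta(a)$, then gives (i).

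For (ii), the forward direction uses (i) to write $U_\eta(a)=M(c(a))$, then evaluates at $\eta(0)=e_0$ (the first column of $M(c(a))$ is $c(a)$) to identify $c(a)=\eta(a)$, so $\eta(a+b)=U_\eta(a)\eta(b)=\eta(a)\star\eta(b)$. Conversely, setting $a=b=0$ in the convolution identity and inducting on the row index will force $\eta(0)=e_0$; then $\eta(a+t)=M(\eta(a))\eta(t)$, combined with the fact that the $\eta(t_i)$ for generic $t_i$ span $\mathbb{C}^{n+1}$ (by the Vandermonde degrees), gives $U_\eta(a)=M(\eta(a))\in\mathscr{C}onv(\mathbb{C}[a])$, so (i) applies. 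For (iii), given a delta operator $\mathfrak{d}$ I would recursively define $\eta_k$ as the unique polynomial of degree $k$ with $\mathfrak{d}\eta_k=k\eta_{k-1}$ and $\eta_k(0)=0$ for $k\geq 1$, using that $\mathfrak{d}\colon\mathbb{C}_k[t]\to\mathbb{C}_{k-1}[t]$ is surjective with one-dimensional kernel; this yields existence, uniqueness, and $\Delta_\eta=\mathfrak{d}$ simultaneously.

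For (iv), I would use (i) to write $U_\theta(a)=M(c(a))$ and set $\tilde\eta(a):=c(a)$; the group law $U_\theta(a+b)=U_\theta(a)U_\theta(b)$ translates to $\tilde\eta(a+b)=\tilde\eta(a)\star\tilde\eta(b)$, and induction on $m$ applied to $\theta(a)=M(\tilde\eta(a)).\theta(0)$ shows each $\tilde\eta_m$ has degree exactly $m$ (the leading coefficient of $P_m$ is not cancelled by lower-degree corrections). So $\tilde\eta$ is Vandermonde, hence binomial by (ii). The key step is to show $\Delta_{\tilde\eta}=\Delta_\theta$: expanding $P_m(t)=\sum_k\binom{m}{k}P_k(0)\tilde\eta_{m-k}(t)$, applying $\Delta_{\tilde\eta}$ and simplifying via the identity $\binom{m}{k}(m-k)=m\binom{m-1}{k}$, one recovers $mP_{m-1}(t)$, so $\Delta_{\tilde\eta}$ also has matrix $N$ in basis $(P_k)$. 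Then (iii) forces $\tilde\eta=\eta$. Item (v) follows by setting $t=0$ in (iv) to get $\theta(a)=\theta(0)\star\eta(a)$ with $\theta(0)_0=P_0\neq 0$; conversely Proposition \ref{devvv} handles $v\star\eta$ once $v_0\neq 0$ is used to preserve the Vandermonde degrees.

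The main obstacle I anticipate is the identification $\Delta_{\tilde\eta}=\Delta_\theta$ in (iv). If the combinatorial calculation proves awkward, an alternative is to observe that both operators are translation invariant, hence polynomials in $\partial_t$, and that $\partial_t$ has the same matrix $M(\tilde\eta'(0))$ in both bases $(P_k)$ and $(\tilde\eta_k)$ (by differentiating the respective translation laws at $a=0$); since that common matrix has minimal polynomial $z^{n+1}$, demanding both operators to be sent to $N$ forces them to agree as endomorphisms of $\mathbb{C}_n[t]$.
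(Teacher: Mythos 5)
Your proposal is correct and takes essentially the same route as the paper: (i) via the commutant of $N$, (ii) by evaluating at the origin to identify $U_{\eta}(a)=M(\eta(a))$, (iii) by recursive solvability of $\mathfrak{d}S_k=kS_{k-1}$ with $S_k(0)=0$ using surjectivity with one-dimensional kernel, and (iv)--(v) by writing $U_{\theta}(a)=M(\vartheta(a))$ and invoking uniqueness from (iii). The only cosmetic difference is in (iv), where you re-derive by hand the identity $\Delta_{\tilde\eta}=\Delta_{\theta}$ that the paper obtains by citing Proposition \ref{devvv} (your binomial-coefficient computation is exactly the proof of that proposition with $v=\theta(0)$).
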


\begin{proof} $ $ 
\par
\begin{enumerate}
\item[(i)] The matrix of $\Delta_{\theta}$ in the basis $\theta(t+a)$ is $U_{\theta}(a) N U_{\theta}(a)^{-1}$. Therfore, $\Delta_{\theta}$ commutes with translation if and only if $U_{\theta}(a)$ commutes with $N$ for all $a$, which proves (i).
\item[(ii)] If $\eta$ satisfies the convolution equation, $\eta(a)=\eta(a) \star \eta(0)$. Since $\eta(a)$ is invertible in the convolution algebra of $\mathbb{C}$ (because its first component is a nonzero constant), it follows that $\eta(0)=e_0$. Besides, $\eta(a+b)=M(\eta(a)).\eta(b)$ so $U_{\theta}(a)=M(\eta(a))$ belongs to the convolution algebra of $\mathbb{C}[a]$. Conversely, if $\eta$ is a binomial curve, $U_{\eta}(a)=M(\vartheta(a))$ for some sequence of polynomials $\vartheta(a)$. This means that 
$
\eta(t+a)=\vartheta(a)\star \eta(t)
$
Taking $t=0$ gives $\eta=\vartheta$, so $\eta(t+a) = \eta(a)\star \eta(t)$.
\item[(iii)]
If $\mathfrak{d}$ is a delta operator, for any $k$ in $\llbracket 1, n \rrbracket$, $\mathfrak{d} \colon \mathbb{C}_{k}[t] \rightarrow \mathbb{C}_{k-1}[t]$ is surjective with one-dimensional kernel, since it is the composition of a unipotent endomorphism with the derivation $\partial / \partial t$. Thus, if $\mathfrak{d}=\Delta_{\theta}$, then $\theta=(S_k)_{0 \leq k \leq n}$ is uniquely determined by the system of equations
\[
\begin{cases}
S_0=1 \\
\mathfrak{d}S_k=kS_{k-1} \,\, \textrm{and}\,\, S_k(0)=0
\end{cases}
\]
\item[(iv)] Since $\theta$ is a Sheffer curve, we can write $U_{\theta}(a) = M(\vartheta(a))$ for some sequence $\vartheta(a)$, yielding $\theta(t+a) = \vartheta(a) \star \theta(t)$. Hence $\vartheta(a) = \theta(t+a) \star \theta(t)^{-1}$. By proposition \ref{devvv}, $\vartheta$ is a Vandermonde curve and taking $t=0$, $\Delta_{\vartheta}=\Delta_{\theta}$. Since $\vartheta(0) = e_0$, $\vartheta$ is a binomial curve. By the uniqueness in (iii), $\vartheta = \eta$. Consequently, $U_{\theta}(a)=M(\eta(a))$ and $\theta(t+a)=\theta(t) \star \eta(a)$.
\item[(v)] This follows directly from (iv) by taking $t=0$.
\end{enumerate}
\end{proof}

\begin{proposition}[Umbral Taylor formula] \label{taylor}
If $P$ is a polynomial in $\mathbb{C}_n[t]$ and $\eta=(Q_k)_{0 \leq k \leq n}$ is the binomial curve attached to $\mathfrak{d}$, then 
\[
P(t+a)=\sum_{k=0}^n  \frac{\mathfrak{d}^k P(t)}{\partial t^k} \times \frac{Q_k(a)}{k!} \cdot
\]
\end{proposition}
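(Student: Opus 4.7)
The plan is to exploit the fact that the family $(Q_k)_{0 \leq k \leq n}$ is a basis of $\mathbb{C}_n[t]$ (since each $Q_k$ has degree $k$), reducing the statement to the case $P = Q_k$ by linearity of both sides in $P$.

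First I would verify the formula for $P = Q_k$. The right-hand side involves $\mathfrak{d}^j Q_k$, which by the very definition of the binomial curve (Proposition \ref{building}(iii), i.e.\ $\mathfrak{d}Q_k = kQ_{k-1}$) equals $\frac{k!}{(k-j)!}\,Q_{k-j}(t)$ for $0 \leq j \leq k$ and vanishes for $j > k$. So the right-hand side becomes
\[
\sum_{j=0}^{k} \frac{k!}{(k-j)!}\,Q_{k-j}(t) \cdot \frac{Q_j(a)}{j!} = \sum_{j=0}^{k} \binom{k}{j} Q_{k-j}(t)\, Q_j(a).
\]
The left-hand side is $Q_k(t+a)$, and by Proposition \ref{building}(ii) the binomial curve satisfies $\eta(t+a) = \eta(a) \star \eta(t)$, whose $k$-th coordinate is precisely the same sum. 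This settles the case $P = Q_k$.

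Then I would conclude by linearity: any $P \in \mathbb{C}_n[t]$ is a $\mathbb{C}$-linear combination of the $Q_k$'s, both sides of the Taylor formula are $\mathbb{C}$-linear in $P$ (the operators $\mathfrak{d}^k$ being linear), and they agree on each basis element $Q_k$; hence they agree on all of $\mathbb{C}_n[t]$.

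There is essentially no obstacle: the identity is a direct rewriting of the matrix equality $U_{\eta}(a) = M(\eta(a))$ from Proposition \ref{building}(iv) in the basis $(Q_0, \ldots, Q_n)$, with the only bookkeeping being the conversion of $\mathfrak{d}^j Q_k$ into falling-factorial multiples of $Q_{k-j}$.
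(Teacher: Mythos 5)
Your proof is correct, and it takes a recognizably different route from the paper's, although both are short applications of Proposition \ref{building}. You expand $P$ in the basis $(Q_0,\dots,Q_n)$ and verify the identity on each $Q_k$ via the binomial property $\eta(t+a)=\eta(a)\star\eta(t)$ (item (ii)) together with $\mathfrak{d}^j Q_k=(k)_j\,Q_{k-j}$, then conclude by linearity. The paper instead avoids any basis decomposition: assuming $\deg P=n$ without loss of generality, it defines a sequence $(P_k)$ by $(n)_k\,P_{n-k}=\mathfrak{d}^k P$ --- which is a Sheffer curve attached to $\eta$, since $\mathfrak{d}P_{n-k}=(n-k)P_{n-k-1}$ by construction --- and reads the formula off the $n$-th component of the Sheffer identity $\theta(t+a)=\theta(t)\star\eta(a)$ (item (iv)). So you use the binomial identity plus a basis-and-linearity argument where the paper applies the full Sheffer identity to an auxiliary curve manufactured from $P$ itself. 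Your version is more elementary: it needs only the defining recursion $\mathfrak{d}Q_k=kQ_{k-1}$ and item (ii), and it sidesteps the check (left implicit in the paper) that the auxiliary sequence is indeed a Sheffer curve. The paper's version buys a conceptual point --- every degree-$n$ polynomial is the top entry of some Sheffer curve, and the Taylor formula is literally the Sheffer identity for that curve --- and treats all $P$ in one stroke. Your closing observation that the identity is the matrix equality $U_{\eta}(a)=M(\eta(a))$ read in the basis $(Q_0,\dots,Q_n)$ correctly identifies the common algebraic content underlying both arguments.
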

\begin{proof}
Without loss of generality, we can assume that $\deg P=n$. If $(P_k)_{0 \leq k \leq n}$ is the Sheffer sequence defined by $(n)_{k} P_{n-k}=\mathfrak{d}^{k}P$, then Sheffer identity (proposition ref{building}(iv)) gives
\[
P(t+a)=P_n(t+a)=\sum_{k=0}^n \binom{n}{k} P_{n-k}(t)Q_k(a)
\]
whence the result.
\end{proof}

\begin{remark}
The element $U_{\theta}$, the notion of Sheffer identity (proposition \ref{building} (iv) as well as Taylor's formula (proposition \ref{taylor}) depend only on the underlying affine Vandermonde curve $\theta^{\mathrm{aff}}$. The notion of bininomial curve is vectorial because it depends on a base point. 
\end{remark}

\begin{example} $ $ \par
\begin{enumerate}
\item[(i)] 
If $\mathfrak{d}$ is the standard derivation, the corresponding binomial curve is $(t^k)_{0 \leq k \leq n}$. Sheffer curves correspond to the classical Appell polynomials (\textit{see} \cite{Appell}, \cite{Friendly}). 
\item[(ii)]
If $\mathfrak{d}(P)=P'(t+a)$, the corresponding binomial curve is given by Abel polynomials $t(t-na)^{n-1}$.
\item[(iii)]
If $\mathfrak{d}$ is the forward difference $\Delta$ given by 
\[
\Delta P(t)=P(t+1)-P(t),
\] then the binomial curve is given by the descending factorials $((t)_k)_{0 \leq k \leq n}$. 
\item[(iv)]
If $\mathfrak{d}$ is the backward difference $\Lambda$ given by
\[
\Lambda P(t)=P(t)-P(t-1),
\]
the binomial curve is given by ascending factorials $(t^{(k)})_{0 \leq k \leq n}$. 
\end{enumerate}
\end{example}

\subsection{Umbral structures}

\begin{proposition}
For any polynomial $S$ of degree $n$, there exists a delta operator, unique up to an $n$-th root of unity, whose binomial sequence has $S$ as highest-degree term.
\end{proposition}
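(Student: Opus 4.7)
The plan is to exploit the translation-invariance of $\mathfrak{d}$, which forces $\mathfrak{d} = \varphi(\partial/\partial t)$ with $\varphi(z) = a_1 z + a_2 z^2 + \cdots + a_n z^n$ and $a_1 \neq 0$. The problem then reduces to recovering the formal compositional inverse $\psi$ of $\varphi$, well-defined in $\mathbb{C}[z]/(z^{n+1})$, from the single polynomial $S_n$, up to the evident scaling symmetry.

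First I would establish the generating-series identity
\[
\sum_{k=0}^n \frac{S_k(t)}{k!}\, z^k \equiv \exp\bigl(t\,\psi(z)\bigr) \pmod{z^{n+1}}
\]
inside $\mathbb{C}_n[t]\otimes \mathbb{C}[z]/(z^{n+1})$. Both sides satisfy the same functional equation $\mathfrak{d} F = zF$ with $F|_{t=0} = 1$: the left side by the recursion $\mathfrak{d}S_k = kS_{k-1}$ together with $S_0 = 1$ and $S_k(0)=0$ for $k\geq 1$; the right side because $(\partial/\partial t)^k \exp(t\psi(z)) = \psi(z)^k \exp(t\psi(z))$, so $\varphi(\partial/\partial t)\exp(t\psi(z)) = \varphi(\psi(z))\exp(t\psi(z)) = z\exp(t\psi(z))$. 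Expanding $F$ in powers of $z$ shows the solution is unique. Extracting the coefficient of $z^n$ then yields
\[
\bigl[t^k\bigr]\frac{S_n(t)}{n!} = \frac{1}{k!}\,\bigl[z^n\bigr]\psi(z)^k \qquad (0\leq k\leq n).
\]

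Writing $b_j := [z^j]\psi$, the case $k=n$ reads $b_1^n = [t^n]S_n(t)$, so $b_1$ is determined up to an $n$-th root of unity. Once $b_1$ is fixed, I would recover $b_2,\ldots,b_n$ inductively via
\[
\bigl[z^n\bigr]\psi(z)^{n-j} = (n-j)\, b_1^{n-j-1}\, b_{j+1} + Q_j(b_1, \ldots, b_j),
\]
the displayed linear term arising from choosing the factor $b_{j+1}z^{j+1}$ exactly once and $b_1 z$ in the remaining $n-j-1$ slots of $\psi(z)^{n-j}$, while every other composition of $n$ into $n-j$ positive parts involves only $b_i$ with $i\leq j$. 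Since $b_1 \neq 0$, this solves for $b_{j+1}$ uniquely, thereby reconstructing $\psi$ and hence $\mathfrak{d}$.

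Finally I would verify the converse uniqueness claim by observing that the substitution $\psi(z) \mapsto \psi(\omega z)$ with $\omega^n = 1$, which corresponds to replacing $\mathfrak{d}$ by $\omega^{-1}\mathfrak{d}$, sends $b_k \mapsto \omega^k b_k$ and multiplies $[z^n]\psi^k$ by $\omega^n = 1$, hence leaves $S_n$ unchanged. Combined with the inductive construction, this produces exactly an orbit of $n$ delta operators realizing any given $S$, acted on by the group of $n$-th roots of unity via scaling. The main technical delicacy I anticipate is the generating-series identity itself: it must be formulated and justified cleanly in the truncated ring $\mathbb{C}[z]/(z^{n+1})$, since the paper works at finite level and otherwise avoids formal power series in this section.
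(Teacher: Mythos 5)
Your proof is correct, but it takes a genuinely different route from the paper's. The paper never leaves the operator side: it normalizes $S$ monic and $\mathfrak{d}=\partial/\partial t+\lambda_2\,\partial^2/\partial t^2+\cdots+\lambda_n\,\partial^n/\partial t^n$, shows by induction (peeling off $\mathfrak{d}S/n$) that $S(t)=t^n+a_1\lambda_2 t^{n-1}+(a_2\lambda_3+P_1(\lambda_2))t^{n-2}+\cdots$, so that each new $\lambda_{j+1}$ enters linearly in a triangular system, and then must separately establish the nonvanishing of the pivots $a_i$ --- which it does by an ad hoc detour through the special operator $\mathfrak{d}=\partial/\partial t+\partial^p/\partial t^p$ and the computation $\mathfrak{d}^{n-p+1}S(0)\neq 0$. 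You instead parametrize by the compositional inverse $\psi=\varphi^{-1}$ in $\mathbb{C}[z]/(z^{n+1})$ and extract $[t^k]\bigl(S_n/n!\bigr)=[z^n]\psi^k/k!$ from the truncated generating identity; in these coordinates the system is again triangular, but your pivot $(n-j)\,b_1^{\,n-j-1}$ is manifestly nonzero (your composition-counting argument for why only $b_{j+1}$ can appear alongside copies of $b_1$ is correct), so the step that costs the paper its least transparent argument comes for free, and the action $\psi(z)\mapsto\psi(\omega z)$, i.e.\ $\mathfrak{d}\mapsto\omega^{-1}\mathfrak{d}$ with $\omega^n=1$, cleanly accounts for the entire solution set rather than just exhibiting the ambiguity. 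What this buys you is transparency and an explicit formula $S_n(t)=\sum_k \frac{n!}{k!}\,[z^n]\psi(z)^k\,t^k$ of Lagrange type; what it costs is importing exponential generating series, which the paper deliberately avoids outside \S\ref{inversion} (your identity is the finite-level version of the characterization of binomial sequences that the paper states without proof in its subsection on generating series), so your route is essentially the classical Mullin--Rota transfer picture truncated at level $n$, where the paper wanted a series-free argument. One shared caveat that your computation actually makes visible: the $k=0$ relation forces $S(0)=0$, so the proposition as stated should be restricted to polynomials vanishing at the origin (the paper's claimed expansion of $S$ likewise carries no constant term, and its $n=3$ example is $t^3+at^2+bt$); your existence construction, like the paper's, only yields $S_n=S$ under this normalization.
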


\begin{proof}
We can assume $S$ monic and prove existence and uniqueness of $\mathfrak{d}$ of the form $\partial / \partial t + \textrm{lower order terms}$.  Let
\[
\mathfrak{d}=\partial / \partial t + \lambda_2 \partial^2 / \partial t^2 + \ldots + \lambda_{n} \partial^{n} / \partial t^{n}
\]
We claim that $S$ is given by a formula of the form
\[
S(t) = t^n + a_1 \lambda_2 t^{n-1}+ (a_2\lambda_3 + P_1(\lambda_2)) t^{n-2}+ \ldots + (a_{n-1} \lambda_n + P_{n-2}(\lambda_2, \ldots, \lambda_{n-1})) 
\]
where the $a_i$ are nonzero coefficients and the $P_i$ are polynomials without constant coefficients. Indeed, by induction, if 
\[
S(t)=t^n + c_1 t^{n-1}+ \ldots + c_n
\]
we can apply the induction to $\mathfrak{d}S/n$ for the differential operator
\[
\partial / \partial t + \lambda_2 \partial^2 / \partial t^2 + \ldots + \lambda_{n-1} \partial^{n-1} / \partial t^{n-1}
\]
so
\[
\mathfrak{d}S =  nt^{n-1}+ b_1 \lambda_2 t^{n-2}+ \ldots + b_{n-2} (\lambda_{n-1} + P_{n-3}(\lambda_2, \ldots, \lambda_{n-2})).
\]
Solving the system
\begin{align*}
&(n-1)_1 c_1 + (n)_2 \lambda_2 = b_1 \lambda_2\\
&(n-2)_1 c_2 + (n-1)_2 \lambda_2 c_1 +  (n)_3 \lambda_3 = b_2 \lambda_3 +P_1(b_2) \\
&(n-3)_1 c_3 + (n-2)_2 \lambda_2 c_2 + (n-1)_3 \lambda_3 c_4 + (n)_4 \lambda_4 = b_3 \lambda_4 +P_2(b_2,b_3) \\
\vdots
\end{align*}
gives the required expression for $S$, except for the nonvanishing of the $a_i$'s. To do this, we consider the particular case
\[
\mathfrak{d}=\partial / \partial t + \partial^p / \partial t^p \cdot
\]
Then $S(t)=t^n + a_{p-1}t^{n-p+1} + \textrm{lower order terms}$. If $a_{p-1}=0$, then $S(t)=t^n + R(t)$ where $\deg \, R \leq n-p$. In this case, $R^{(n-p+1)}=0$ so
\begin{align*}
\mathfrak{d}^{n-p+1}S(t)&= \left(\mathrm{id}+ \partial^{p-1} / \partial t^{p-1}\right)^{n-p+1} \partial^{n-p+1} / \partial t^{n-p+1} (t^n + R(t)) \\
&=\left(\mathrm{id}+ \partial^{p-1}/ \partial t^{p-1}\right)^{n-p+1} (n)_{n-p+1} t^{p-1}
\end{align*}
so $\mathfrak{d}^{n-p+1} S(0) \neq 0$ which is a contradiction.
\end{proof}

\begin{definition}
An umbral structure of level $n$ is the datum of a delta operator modulo $n$-th roots of unity on $\mathbb{C}_n[t]$.
\end{definition}

\begin{example}
Let us describe the umbral correspondence $\mathfrak{d} / \mu_n \leftrightarrow S_n$ for $n=3$. It is given by
\[
\begin{cases}
\zeta^{-1} \left(\dfrac{\partial}{\partial t} + \alpha \dfrac{\partial^2}{\partial t^2} + \beta \dfrac{\partial^3}{\partial t^3} \right) \rightarrow \zeta^3 (t^3+6\alpha t^2+6(2\alpha^2-\beta)t) \\
\lambda(t^3 + at^2 + bt) \rightarrow \lambda^{-1/3} \left(\dfrac{\partial}{\partial t} + \dfrac{a}{6}  \dfrac{\partial^2}{\partial t^2} + \dfrac{a^2-3b}{18} \dfrac{\partial^3}{\partial t^3} \right) 
\end{cases}
\]
The binomial sequence is given by
\[
\begin{cases}
S_0(t)=1 \\
S_1(t)=\zeta t \\
S_2(t) = \zeta^2 (t^2-2\alpha t) \\
S_3(t) = \zeta^3 (t^3+6\alpha t^2+6(2\alpha^2-\beta)t)
\end{cases}
\]
\end{example}
%
%
%

\subsection{Group-theoretic aspects}
Let $\mathfrak{d}$ be a delta operator and $\eta$ be the associated binomial curve. Then the set of Vandermonde curves can be identified with $\mathrm{T}_{n+1}(\mathbb{C})$.

\begin{proposition} Let $\eta$ be a binomial curve. After trivializing the $\mathrm{T}_{n+1}(\mathbb{C})$-torsor of Vandermonde curves at $\eta$, the following properties hold:
\begin{enumerate}
\item[--]
The set of binomial curves is a subgroup $N$ of $\mathrm{T}_{n+1}(\mathbb{C})$; it consists of matrices that normalize $\mathscr{C}onv(\mathbb{C})$ in $U_{n+1}(\mathbb{C})$ and fix $e_0$.
\item[--]
The set of Sheffer curves whose delta operator is $\mathfrak{d}$ is $\mathscr{C}onv(\mathbb{C})$, it is a subgroup of $\mathrm{T}_{n+1}(\mathbb{C})$.
\item[--] Sheffer curves form a subgroup of $\mathrm{T}_{n+1}(\mathbb{C})$, which is the subgroup $\mathscr{C}onv(\mathbb{C}) \rtimes N$.
\end{enumerate}
\end{proposition}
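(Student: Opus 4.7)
The plan is to establish the three bullets in order, translating each characterization into a matrix condition via the trivialization $\theta\leftrightarrow T_\theta$ defined by $\theta=T_\theta\cdot\eta$. For the second bullet (Sheffer curves attached to $\mathfrak{d}$), I would invoke Proposition~\ref{building}(v): such curves are precisely $v\star\eta$ with $v\in\mathscr{C}onv(\mathbb{C})^\times$, so in the trivialization they correspond exactly to the matrices $M(v)\in\mathscr{C}onv(\mathbb{C})$. Since $M(v_1)M(v_2)=M(v_1\star v_2)$, this identification is a group isomorphism and yields the subgroup claim at once.

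For the first bullet (binomial curves), I would write $\eta'=T\cdot\eta$ and translate both defining conditions into matrix form. The normalization $\eta'(0)=e_0$ becomes $Te_0=e_0$. Combining $\eta(t+a)=M(\eta(a))\eta(t)$ from Proposition~\ref{building}(ii) with $\eta'(t+a)=T\eta(t+a)=TM(\eta(a))T^{-1}\eta'(t)$ gives $U_{\eta'}(a)=TM(\eta(a))T^{-1}$, and Proposition~\ref{building}(i) then makes the Sheffer property equivalent to $TM(\eta(a))T^{-1}\in\mathscr{C}onv(\mathbb{C})$ for every $a$. The key step is to note that $\{\eta(a)\}_{a\in\mathbb{C}}$ spans $\mathbb{C}^{n+1}$ (by a Vandermonde-type nonvanishing on distinct evaluation points, exploiting $\deg S_k=k$), so $\{M(\eta(a))\}$ spans $\mathscr{C}onv(\mathbb{C})$; the Sheffer condition thus collapses to $T\mathscr{C}onv(\mathbb{C})T^{-1}=\mathscr{C}onv(\mathbb{C})$, equivalently $TNT^{-1}\in\mathscr{C}onv(\mathbb{C})$. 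Consequently $N$ is the intersection of the normalizer of $\mathscr{C}onv(\mathbb{C})$ in $\mathrm{T}_{n+1}(\mathbb{C})$ with the stabilizer of $e_0$, hence a subgroup.

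Finally, for the third bullet, I would use Proposition~\ref{building}(v) one more time: every Sheffer curve $\theta$ has an attached binomial curve $\eta_\theta$ and admits $\theta=v\star\eta_\theta$, which translates to $T_\theta=M(v)\cdot T_{\eta_\theta}\in\mathscr{C}onv(\mathbb{C})\cdot N$. Conversely, a product $M(v)\cdot S$ with $S\in N$ gives back the Sheffer curve $v\star(S\eta)$. The intersection $\mathscr{C}onv(\mathbb{C})\cap N$ is trivial because $M(v)e_0=v$, so $M(v)\in N$ forces $v=e_0$ and hence $M(v)=I$; combined with the fact that $N$ normalizes $\mathscr{C}onv(\mathbb{C})$ by its very definition, this realizes the Sheffer curves as the semidirect product $\mathscr{C}onv(\mathbb{C})\rtimes N$ inside $\mathrm{T}_{n+1}(\mathbb{C})$. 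The main subtlety is the passage from the continuous family of Sheffer conditions $U_{\eta'}(a)\in\mathscr{C}onv(\mathbb{C})$ to the single algebraic condition $TNT^{-1}\in\mathscr{C}onv(\mathbb{C})$, which rests on the spanning property of $\{M(\eta(a))\}$ in the convolution algebra.
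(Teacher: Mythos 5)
Your proposal is correct and follows essentially the same route as the paper's proof: trivialize at $\eta$, compute $U_{T\eta}(a)=TM(\eta(a))T^{-1}$, and characterize binomial curves by the condition that $\mathrm{Ad}(T)$ maps the convolution algebra to itself. The only differences are matters of detail: you make explicit the spanning of $\{M(\eta(a))\}_{a\in\mathbb{C}}$ in the convolution algebra (which the paper uses implicitly) and spell out the routine verifications of the second and third bullets (via Proposition~\ref{building}(v) and the trivial intersection $M(v)e_0=v$) that the paper leaves to the reader.
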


\begin{proof}
Let $\vartheta(t)=T.\eta(t)$. Since $\eta(t+a)=\eta(a) \star \eta(t) = M(\eta(a)). \eta(t)$, we have $\vartheta(t+a)=TM(\eta(a)).\eta(t)=TM(\eta(a))T^{-1}.\vartheta(t)$ so $U_{\vartheta}(a)=TM(\eta(a))T^{-1}$. If $\vartheta$ normalizes $\mathscr{C}onv(\mathbb{C})$, then for all $a$, $U_{\vartheta}(a)$ belongs to $\mathscr{C}onv(\mathbb{C})$, which implies that $U_{\vartheta}$ belongs to $\mathscr{C}onv(\mathbb{C}[a])$. Conversely, if $\vartheta$ is a binomial curve, $T M(\eta(a))T^{-1}$ belongs to $\mathscr{C}onv(\mathbb{C})$ for all $a$. This proves that $\mathrm{Ad}(T)$ maps the convolution algebra of $\mathbb{C}$ to itself. This mapping is injective: if  $T M(a) T^{-1}=I_n$, $T$ itself belongs to the convolution algebra but since $T$ fixes $e_0$, $T=I_n$. Hence $\mathrm{Ad}(T)$ is an isomorphism of the convolution algebra of $\mathbb{C}$.
\end{proof}

\subsection{Generating series}
For some computations it is necessary to work with infinite sequences of polynomials and to use generating series. We recall the following results:

\begin{proposition}
A sequence $(S_n)_{n \geq 0}$ of polynomials is a binomial sequence if and only if
\[
\sum_{n=0}^{+\infty} \dfrac{S_n(t)}{n!}z^n = \exp(t \phi(z))
\]
for some formal power series $\phi$ with $\phi(0)=0$ and $\phi'(0) \neq 0$. Furthermore, the associated delta operator is given by
\[
\mathfrak{d}=\phi^{-1}(\partial / \partial t).
\]
\end{proposition}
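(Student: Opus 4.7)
The plan is to rephrase everything in terms of the two-variable formal power series $F(z,t) := \sum_{n\geq 0}\frac{S_n(t)}{n!}z^n\in\mathbb{C}[t][[z]]$, and to translate the binomial identity into the functional equation $F(z,a+b)=F(z,a)F(z,b)$. The proposition then becomes a statement about which $F$ are of exponential type.

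For the direction ``generating series $\Rightarrow$ binomial sequence'', I would assume $F(z,t)=\exp(t\phi(z))$ with $\phi(0)=0$ and $\phi'(0)=c\neq 0$, and verify the three defining properties. Writing $\phi(z)=cz+O(z^2)$ and expanding $\exp(t\phi(z))=\sum_{k\geq 0}t^k\phi(z)^k/k!$, the coefficient of $z^n$ is a polynomial in $t$ whose degree-$n$ term is $(c^n/n!)\,t^n$, so $\deg S_n=n$. Evaluating at $t=0$ gives $F(z,0)=1$, so $S_0=1$ and $S_n(0)=0$ for $n\geq 1$. Finally, the Cauchy-product expansion of the identity $\exp((a+b)\phi(z))=\exp(a\phi(z))\exp(b\phi(z))$ is exactly $S_n(a+b)=\sum_k\binom{n}{k}S_k(a)S_{n-k}(b)$, and Proposition \ref{building}(ii) then gives that $(S_n)$ is a binomial sequence.

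For the converse, starting from a binomial sequence, the binomial identity rewrites as $F(z,a+b)=F(z,a)F(z,b)$ in $\mathbb{C}[a,b][[z]]$; together with $F(z,0)=1$ (from $S_0=1$ and $S_n(0)=0$), $F$ is a unit in $\mathbb{C}[t][[z]]$. I would differentiate the multiplicative identity in $b$ at $b=0$ to obtain the formal ODE $\partial_t F(z,t)=\phi(z)F(z,t)$ with $\phi(z):=\partial_t F(z,0)$ in $z\mathbb{C}[[z]]$ (the constant term vanishes because $S_0$ is constant). The unique solution with $F(z,0)=1$ is then $F(z,t)=\exp(t\phi(z))$. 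The nonvanishing $\phi'(0)\neq 0$ comes from $\deg S_1=1$ together with $S_1(0)=0$, which force $S_1(t)=\lambda t$ with $\lambda\neq 0$, whence $\phi'(0)=S_1'(0)=\lambda$.

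For the last statement, the recursion $\mathfrak{d}S_n=nS_{n-1}$ packages (by applying $\mathfrak{d}$ coefficient-wise in $z$) into $\mathfrak{d}F(z,t)=zF(z,t)$, while the explicit shape of $F$ gives $\partial_t F(z,t)=\phi(z)F(z,t)$. Since $\mathfrak{d}$ strictly lowers degrees, $\phi(\mathfrak{d})$ is a well-defined operator on $\mathbb{C}[t]$, and $\phi(\mathfrak{d})F=\phi(z)F=\partial_t F$; comparing coefficients of $z^n$ and using that $(S_n)_n$ is a basis of $\mathbb{C}[t]$ yields $\phi(\mathfrak{d})=\partial/\partial t$, and formal inversion of $\phi$ (legitimate since $\phi'(0)\neq 0$) gives $\mathfrak{d}=\phi^{-1}(\partial/\partial t)$. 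The only delicate point I anticipate is careful bookkeeping in $\mathbb{C}[t][[z]]$ -- uniqueness for the formal ODE, legitimacy of $\phi(\mathfrak{d})$ and of $\phi^{-1}$, coefficient-wise action of $\mathfrak{d}$ on $F$ -- but these are all routine, and no new ingredient beyond Proposition \ref{building} is required.
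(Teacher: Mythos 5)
Your proof is correct, but there is nothing in the paper to compare it against: this proposition is introduced with ``We recall the following results'' and is stated without proof, being the classical Mullin--Rota/Roman characterization of binomial sequences. Your argument is the standard one from that literature and all its steps check out: encoding the binomial identity as the functional equation $F(z,a+b)=F(z,a)F(z,b)$ for $F(z,t)=\sum_{n}S_n(t)z^n/n!$, extracting the formal ODE $\partial_t F=\phi(z)F$ with $\phi(z)=\partial_t F(z,0)\in z\mathbb{C}[[z]]$ and solving it uniquely (e.g.\ by noting $\partial_t\bigl(F(z,t)e^{-t\phi(z)}\bigr)=0$ coefficient-wise in $\mathbb{C}[t][[z]]$), reading off $\phi'(0)=S_1'(0)\neq 0$ from $\deg S_1=1$ and $S_1(0)=0$, and conversely expanding $\exp(t\phi(z))$ to get degrees, normalization at $t=0$, and the binomial identity via the Cauchy product. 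The operator identification is also handled correctly: packaging $\mathfrak{d}S_n=nS_{n-1}$ as $\mathfrak{d}F=zF$, comparing with $\partial_tF=\phi(z)F$ to get $\phi(\mathfrak{d})=\partial/\partial t$ on the basis $(S_n)_{n\geq 0}$, and substituting $\phi^{-1}$ -- legitimate since $\mathfrak{d}$ and $\partial/\partial t$ are locally nilpotent on $\mathbb{C}[t]$ and $\phi$, $\phi^{-1}$ have zero constant term. A pleasant byproduct you could make explicit: $\mathfrak{d}=\phi^{-1}(\partial/\partial t)$ shows the operator defined by $\mathfrak{d}S_n=nS_{n-1}$ is automatically a constant-coefficient (hence translation-invariant) operator with nonzero first coefficient, i.e.\ genuinely a delta operator in the paper's sense. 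One cosmetic remark: your appeal to Proposition~\ref{building}(ii) in the forward direction is slightly anachronistic, as that statement lives at finite level $n$ while the present proposition concerns infinite sequences; but the Cauchy product already yields $S_n(a+b)=\sum_{k}\binom{n}{k}S_k(a)S_{n-k}(b)$ verbatim, which is the defining identity of a binomial sequence, so the citation is dispensable.
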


\begin{proposition}
A sequence $(A_n)_{n \geq 0}$ of polynomials is an Appell sequence if and only if 
\[
\sum_{n=0}^{+\infty} \dfrac{A_n(t)}{n!}z^n = a(z)\exp(t z)
\]
for some formal power series $a$ such that $a(0)=0$. Moreover, the bijection between Appell sequences and invertible power series is a group isomorphism.
\end{proposition}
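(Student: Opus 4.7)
The plan is to bootstrap from the finite-level analysis via Proposition \ref{building} and then patch the truncations into an infinite sequence. Fix $n$ and consider the truncation $(A_0,\ldots,A_n)$ of an Appell sequence. Since the delta operator is $\partial/\partial t$, its associated binomial curve is precisely $e(t)=(1,t,\ldots,t^n)^{\top}$. Thus our truncation is a Sheffer curve attached to $e$, and Proposition \ref{building}(v) provides a unique invertible sequence $v=(v_0,\ldots,v_n)\in\mathscr{C}onv(\mathbb{C})$ such that the Sheffer curve equals $v\star e(t)$; concretely,
\[
A_k(t)=\sum_{j=0}^{k}\binom{k}{j}v_{k-j}\,t^{j},\qquad 0\leq k\leq n.
\]
These finite-level vectors $v$ are manifestly compatible under truncation (the formula for $A_k$ involves only $v_0,\ldots,v_k$), so they assemble into a unique infinite invertible sequence $(v_n)_{n\geq 0}$ with $v_0\neq 0$, and conversely any such sequence yields an Appell sequence.

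Next I would convert this combinatorial parametrization into a statement about generating series. A direct Fubini exchange of summations gives
\[
\sum_{n=0}^{+\infty}\frac{A_n(t)}{n!}z^{n}=\sum_{n=0}^{+\infty}\frac{z^{n}}{n!}\sum_{k=0}^{n}\binom{n}{k}v_{n-k}\,t^{k}=\left(\sum_{k=0}^{+\infty}\frac{(tz)^{k}}{k!}\right)\left(\sum_{m=0}^{+\infty}\frac{v_{m}}{m!}z^{m}\right),
\]
that is, $\exp(tz)\,a(z)$ with $a(z)=\sum_{m\geq 0}v_m z^m/m!$. Since $a(0)=v_0\neq 0$, the power series $a$ is invertible, and the reverse substitution $v_m=m!\,[z^m]a(z)$ recovers $v$ from $a$, so the correspondence Appell sequence $\leftrightarrow$ invertible power series is bijective.

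For the final assertion, I would note that the group law on Appell sequences is the one inherited from Proposition \ref{building}(v): Sheffer curves attached to the fixed binomial curve $e(t)$ are parametrized by the abelian group $\mathscr{C}onv(\mathbb{C})$, and the product corresponds to the binomial convolution of the associated vectors $v$. It therefore suffices to show that $v\mapsto a(z)$ transforms binomial convolution into ordinary multiplication of formal power series; this is the classical identity
\[
\sum_{n=0}^{+\infty}\frac{(v\star w)_{n}}{n!}z^{n}=\sum_{n=0}^{+\infty}\frac{z^{n}}{n!}\sum_{k=0}^{n}\binom{n}{k}v_{k}w_{n-k}=a(z)\,b(z),
\]
proved by the same exchange of summations. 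Combined with the bijection above, this gives the claimed group isomorphism.

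There is no genuine obstacle here; the only subtlety is the transition from the finite-dimensional formalism developed earlier in the paper to an infinite setting. Because every operation in play (binomial convolution, the exponential generating series map, and ordinary product) is compatible with truncation at any level $n$, this transition is essentially a matter of bookkeeping rather than analysis: no convergence issues arise since we work entirely in $\mathbb{C}[[z]]$.
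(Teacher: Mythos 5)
Your proof is correct. Note, however, that the paper itself offers no proof of this proposition: it is stated under the heading ``We recall the following results'' as a classical fact (it is the standard generating-function characterization of Appell sequences), so there is no argument in the text to compare yours against. What you supply is a legitimate self-contained derivation from the paper's own finite-level machinery: identifying the truncations of an Appell sequence as the Sheffer curves attached to the standard curve $e(t)=(t^k)_{0\le k\le n}$, invoking Proposition \ref{building}(v) to get the parametrization $A_k(t)=\sum_j\binom{k}{j}v_{k-j}t^j$ with $v_0\neq 0$, checking compatibility under truncation, and then translating binomial convolution of the coefficient sequences into multiplication of exponential generating series. All three steps are sound, and the final Fubini computations are the standard EGF identities. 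One point worth flagging: the statement as printed requires $a(0)=0$, which is evidently a typo for $a(0)\neq 0$ (otherwise $A_0$ would vanish and the correspondence with \emph{invertible} power series in the last sentence would be vacuous); your argument correctly works with the condition $a(0)=v_0\neq 0$, i.e.\ with the corrected statement.
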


\section{Additive convolution}

Throughout this section, we fix a delta operator $\mathfrak{d}$, and denote by $(S_k)_{0 \leq k \leq n}$ the corresponding binomial sequence.

\subsection{Polarity pairing}
For any polynomial $P$, define $P^{\dagger}(t)=P(-t)$. Furthermore, we define an involution on differential operators with constant coefficients (in particular, on umbral structures) by
\[
D^{*}(P)=-(D(P^{\dagger}))^{\dagger}.
\]
The binomial sequence associated with $\mathfrak{d}^{*}$ is $((-1)^k S_k^{\dagger})_{0 \leq k \leq n}$. 
\begin{remark}
The involution $D \mapsto D^{*}$ is independent of the basepoint $0$ and can be defined on differential operators with constant coefficients on the affine line. Indeed, these differential operators admit $\partial^p /\partial t^p$ as a basis, where $t$ is any affine coordinate with unit derivative. In this basis, $D \mapsto D^{*}$ is the multiplication by $(-1)^p$ on each vector $\partial^p /\partial t^p$.
\end{remark}

\begin{definition}
For $A$, $B$ in $\mathbb{C}_n[t]$, we define their umbral polarity pairing as
\[
\langle A, B \rangle^{\mathfrak{d}}_n = \sum_{p=0}^n (-1)^{n-p}  \,\mathfrak{d}^p A \times (\mathfrak{d}^{*})^{n-p} B
\]
If $\mathfrak{d}$ is the derivation, we write $\langle \star, \star \rangle^{\mathfrak{d}}_n=\langle \star, \star \rangle_n$.
\end{definition}

This quantity is, a priori, an element of $\mathbb{C}_n[t]$, but it turns out to be constant. The main properties of this pairing are summarized in the following result : 

\begin{theorem} \label{masse}
The pairing $\langle \star, \star \rangle^{\mathfrak{d}}_n$ takes constant values, is translation invariant, and depends only on the umbral structure attached to $\mathfrak{d}$. Besides, 
\begin{enumerate}
\item[(i)] $\langle B, A \rangle^{\mathfrak{d}}_n = (-1)^n \langle A, B \rangle_n^{\mathfrak{d}^*}=\langle A^{\dagger}, B^{\dagger} \rangle^{\mathfrak{d}}_n$.
\item[(ii)] If $\deg B=n-k$, $\langle A, B \rangle^{\mathfrak{d}}_n=\langle \mathfrak{d}^k A, B \rangle^{\mathfrak{d}}_{n-k}$.
\item[(iii)] If $\deg A=n-k$, $\langle A, B \rangle^{\mathfrak{d}}_n=(-1)^k \langle A, (\mathfrak{d}^{*})^k B \rangle^{\mathfrak{d}}_{n-k}$.
\item[(iv)] $\langle S_{n}(t-\alpha), Q \rangle^{\mathfrak{d}}_n = n! Q(\alpha)$.
\item[(v)] $\langle S_p, S_q^{\dagger} \rangle^{\mathfrak{d}}_n= p! q! \delta_{p+q, n}.$
\item[(vi)] $\langle v \star \eta(t), w \star \eta(-t) \rangle^{\mathfrak{d}}_n= n! \,v \star w.$
\item[(vii)] For any differential operator with constant coefficients $D$, we have $\langle DA, B \rangle_n^{\mathfrak{d}}=-\langle A,  D^* B\rangle_n^{\mathfrak{d}}$.
\end{enumerate}
\end{theorem}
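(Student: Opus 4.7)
The plan is to prove (vii) first, derive the constancy of the pairing (and thence translation invariance and independence of the umbral representative) from it, and finally obtain (i)--(vi) either by reindexing or by evaluating the resulting constant at a well-chosen basepoint.

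I would first dispose of (ii) and (iii), which are combinatorial: if $\deg B = n-k$, then $(\mathfrak{d}^*)^{n-p} B = 0$ for $p < k$, and substituting $p = k + p'$ in the defining sum gives $\langle A, B\rangle_n^{\mathfrak{d}} = \langle \mathfrak{d}^k A, B\rangle_{n-k}^{\mathfrak{d}}$; if $\deg A = n-k$, the sum truncates at $p = n-k$ and a comparison of signs produces (iii). These do not require constancy and will be used freely below.

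For (vii) I begin with $D = \mathfrak{d}$. Shifting $p \mapsto p+1$ in the defining sum of $\langle \mathfrak{d} A, B\rangle_n^{\mathfrak{d}}$, the new $p = n+1$ term vanishes because $\mathfrak{d}^{n+1}A = 0$ on $\mathbb{C}_n[t]$; the resulting expression matches $-\langle A, \mathfrak{d}^* B\rangle_n^{\mathfrak{d}}$ after discarding its own vanishing term $(\mathfrak{d}^*)^{n+1}B = 0$. Iterating yields $\langle \mathfrak{d}^k A, B\rangle_n^{\mathfrak{d}} = (-1)^k \langle A, (\mathfrak{d}^*)^k B\rangle_n^{\mathfrak{d}}$. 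Since $\mathfrak{d}$ is a delta operator, $\mathbb{C}[\mathfrak{d}] = \mathbb{C}[\partial/\partial t]$, so any constant-coefficient $D$ decomposes as $D = \sum_k c_k \mathfrak{d}^k$. A short computation from the definition shows that the involution satisfies $(D_1 D_2)^* = -D_1^* D_2^*$, whence $(\mathfrak{d}^k)^* = (-1)^{k-1}(\mathfrak{d}^*)^k$ by induction; plugging in gives (vii). I expect this twisted multiplicativity to be the only genuinely delicate point of the proof. Now applying (vii) to $D = \partial/\partial t$, which satisfies $(\partial/\partial t)^* = \partial/\partial t$ by a direct check from the definition, one gets $\langle \partial A/\partial t, B\rangle_n^{\mathfrak{d}} + \langle A, \partial B/\partial t\rangle_n^{\mathfrak{d}} = 0$. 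But the left-hand side equals $\partial \langle A, B\rangle_n^{\mathfrak{d}}/\partial t$ by the Leibniz rule applied termwise, using that $\partial/\partial t$ commutes with $\mathfrak{d}$ and $\mathfrak{d}^*$; hence the pairing is constant. Translation invariance is then immediate because every ingredient is translation invariant, and if $\mathfrak{d}$ is replaced by $\zeta \mathfrak{d}$ with $\zeta^n = 1$, every summand in the defining sum is rescaled by $\zeta^n = 1$, which gives independence of the umbral representative.

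It remains to handle the numerical identities. For (i), the substitution $q = n-p$ yields the first equality, while the second uses the dagger formulas $\mathfrak{d}^p(A^\dagger) = (-1)^p ((\mathfrak{d}^*)^p A)^\dagger$ and $(\mathfrak{d}^*)^p (B^\dagger) = (-1)^p (\mathfrak{d}^p B)^\dagger$ (both proved by a straightforward induction from the definition of the involution). For (iv), I evaluate the constant pairing at $t = \alpha$: translation invariance of $\mathfrak{d}$ gives $\mathfrak{d}^r S_n(t-\alpha)\vert_{t=\alpha} = (n)_r S_{n-r}(0) = n!\,\delta_{r,n}$, so only the $r=n$ term survives and yields $n!\,Q(\alpha)$. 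For (v), I evaluate at $t = 0$ and use $S_i(0) = \delta_{i,0}$; this forces $r = p$ and $n-r = q$, collapsing the sum to $p!\,q!\,\delta_{p+q,n}$. Finally (vi) follows by expanding the components of $v \star \eta(t)$ and $w \star \eta(-t)$ in the bases $(S_k)$ and $(S_k^\dagger)$ of $\mathbb{C}_n[t]$, applying bilinearity and (v), and using the arithmetic simplification $\binom{n}{k}^2 k!(n-k)! = n!\binom{n}{k}$.
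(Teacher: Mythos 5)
Your proof is correct, and it inverts the paper's logical architecture. The paper makes (iv) the engine: it introduces the Sheffer sequence $(Q_k)_{0 \leq k \leq n}$ for $\mathfrak{d}^*$ with $Q_n=Q$, collapses $\langle S_n(t-\alpha), Q\rangle_n^{\mathfrak{d}}$ to $n!\,Q_n(\alpha)$ via the Sheffer identity for the dual binomial sequence $((-1)^k S_k^{\dagger})$, reads off constancy and translation invariance from the resulting formula $\langle S_{n-p}(t-\alpha), Q\rangle_n^{\mathfrak{d}}=(n)_p(\mathfrak{d}^*)^pQ(\alpha)$, gets (v) as the specialization $\alpha=0$, $Q=S_q^{\dagger}$, and reduces (vi) to (v) on basis vectors, while declaring (i)--(iii) straightforward and leaving (vii) without explicit argument. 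You instead prove the adjunction (vii) first by a bare index shift (the boundary terms dying because $\mathfrak{d}^{n+1}=(\mathfrak{d}^*)^{n+1}=0$ on $\mathbb{C}_n[t]$), extract constancy from the self-adjointness $(\partial/\partial t)^*=\partial/\partial t$ together with Leibniz, and then obtain (iv) and (v) by evaluating the now-constant pairing at $t=\alpha$ and $t=0$ using $S_k(0)=\delta_{k,0}$. I checked the points you flag as delicate and they hold: writing $\sigma$ for the dagger, the definition gives $D^*=-\sigma D\sigma$, whence $(D_1D_2)^*=-D_1^*D_2^*$ and $(\mathfrak{d}^k)^*=(-1)^{k-1}(\mathfrak{d}^*)^k$, which exactly compensates the iterated sign in $\langle \mathfrak{d}^k A, B\rangle_n^{\mathfrak{d}}=(-1)^k\langle A,(\mathfrak{d}^*)^kB\rangle_n^{\mathfrak{d}}$ including the $k=0$ term (where $\mathrm{id}^*=-\mathrm{id}$); note also that your $(\partial/\partial t)^*=\partial/\partial t$ is what the definition $D^*P=-(DP^{\dagger})^{\dagger}$ actually yields, i.e.\ $(-1)^{p+1}$ on $\partial^p/\partial t^p$, even though the paper's remark states $(-1)^p$. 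The sign bookkeeping you glossed in (v) also closes, since $(-1)^{n-p}(-1)^q=1$ when $p+q=n$. What each route buys: yours is more self-contained --- no umbral Taylor or Sheffer machinery, item (vii) becomes the foundation rather than an afterthought, and all numerical identities reduce to single-point evaluations --- at the cost of the decomposition $D\in\mathbb{C}[\mathfrak{d}]$ (recorded anyway in the paper's subsection on differential operators); the paper's route exhibits (iv) directly as a reproducing-kernel property for arbitrary $Q$, which is what its later sections repeatedly exploit.
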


\begin{proof}
Properties (i), (ii), and (iii) are straightforward. We now  prove (iv). For this, let $(Q_k)_{0 \leq k \leq n}$ the Sheffer sequence associated with $\mathfrak{d}^{*}$ such that $Q_n=Q$. Then
\begin{align*}
\langle S_n(t-\alpha), Q(t) \rangle^{\mathfrak{d}}_n &= \sum_{p=0}^n (n)_{n-p} (-1)^{n-p}  S_{n-p}(t-\alpha) (n)_p Q_p(t) \\
&= n!\sum_{p=0}^n \binom{n}{p} (-1)^{n-p} S_{n-p}^{\dagger}(\alpha-t)  Q_p(t) \\
&=n! Q_n(\alpha) \\
&=n! Q(\alpha).
\end{align*}
Applying (ii), we find that 
\[
\langle S_{n-p}(t-\alpha), Q(t) \rangle^{\mathfrak{d}}_n=(n)_{p} (\mathfrak{d}^*)^p Q(\alpha).
\]
This proves, in particular, that the pairing takes constant values and is invariant under translation. Taking $\alpha=0$ and $Q=S_q^{\dagger}$, we get 
\begin{align*}
\langle S_{n-p}, S_q^{\dagger} \rangle^{\mathfrak{d}}_n &= (n)_p (\mathfrak{d}^*)^p S_q^{\dagger} (0) \\
&=(n)_p (q)_p (-1)^p S_{q-p}^{\dagger}(0) \\
&=  (n)_p (q)_p (-1)^p \delta_{q-p, n} \\
&= n! \delta_{q-p, n}.
\end{align*}
To obtain (vi), it suffices to take $v=e_{n-p}$ and $w=e_{n-q}$; the statement then reduces to (v).
\end{proof}

\begin{remark}
Umbral pairing is a purely affine notion and is independant of the base point $0$.
\end{remark}

\subsection{Differential operators}

Let $\mathcal{D}_n$ denote the algebra of differential operators on $\mathbb{C}$ with constant coefficients of order at most $n$; it is the same as the algebra of translation-invariant endomorphisms of $\mathbb{C}_n[t]$. 

\begin{enumerate}
\item[--] Via the umbral structure $\mathfrak{d}$, there is a natural algebra isomorphism
\[
\mathcal{D}_n \simeq \mathbb{C}_n[t] /(\mathfrak{d}^{n+1})\cdot
\]
\item[--] There is also a linear isomorphism between $\mathcal{D}_n$ and $\mathbb{C}_n[t]$ given by $D \mapsto D(S_n)$, where $S_n$ is the highest-degree component of the binomial sequence attached to $\mathfrak{d}$.
\end{enumerate}

\begin{definition}
Umbral additive convolution on $\mathbb{C}_n[t]$ is the algebra structure $\boxplus_n^{\mathfrak{d}}$ obtained by transporting the commutative algebra structure of $\mathcal{D}_n$ via the map $D \mapsto D(S_n)$. If $\mathfrak{d}$ is the derivation, we write $\boxplus_n^{\mathfrak{d}}=\boxplus_n$. 
\end{definition}

Concretely, this means that $D_1(S_n)  D_2(S_n)=(D_1D_2)(S_n)$. Any translation-invariant operator $T$ acting on $\mathbb{C}_n[t]$ is of the form
\[
Q \mapsto P \boxplus_n^{\mathfrak{d}} Q,
\]
where $P=T(S_n)$.
\begin{proposition} \label{misc} Umbral additive convolution $\boxplus_n^{\mathfrak{d}}$ satisfies the following properties : 

\begin{enumerate}
\item[(i)] The operation $\boxplus_n^{\mathfrak{d}}$ is bilinear, symmetric and associative, with neutral element $S_n$.
\item[(ii)]
$
(n)_k \,S_{n-k} \boxplus_n^{\mathfrak{d}} Q = \mathfrak{d}^k Q.
$
\item[(iii)] For any differential operator with constant coefficients $D$, we have $D(P \boxplus_n^{\mathfrak{d}} Q)=D(P) \boxplus_n^{\mathfrak{d}} Q=  P \boxplus_n^{\mathfrak{d}} D(Q)$.
\item[(iv)] If $\mathrm{deg} P = n-k$, $(n)_k P \boxplus_n^{\mathfrak{d}} Q = P \boxplus_n^{\mathfrak{d}} \mathfrak{d}^k Q$.
\item[(v)] Invertible elements for the additive convolution are polynomials of degree $n$. 
\item[(vi)] $ n! P \boxplus_n^{\mathfrak{d}} Q(z)=\langle P(t), Q(z-t)\rangle_n^{\mathfrak{d}}$.
\item[(vii)] $n! P \boxplus_n^{\mathfrak{d}} Q (z)=\sum_{k=0}^n \mathfrak{d}^{k}P(0) \times \mathfrak{d}^{n-k}Q(z)$.
\item[(viii)] $(P \boxplus_n^{\mathfrak{d}} Q)^{\dagger} 
= (-1)^n P^{\dagger} \boxplus_n^{\mathfrak{d}^*} Q^{\dagger}$.
\end{enumerate}
\end{proposition}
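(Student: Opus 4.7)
The strategy is to exploit the definition of $\boxplus_n^{\mathfrak{d}}$ as the transport of composition in $\mathcal{D}_n = \mathbb{C}[\mathfrak{d}]/(\mathfrak{d}^{n+1})$ through the linear bijection $D \mapsto D(S_n)$, combined with the umbral Taylor formula (Proposition~\ref{taylor}). The defining identity $D_1(S_n)\boxplus_n^{\mathfrak{d}} D_2(S_n)=(D_1D_2)(S_n)$ is the key that drives every part of the proof.

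For (i), bilinearity, symmetry, and associativity are inherited from $\mathcal{D}_n$, and the unit $\mathrm{id}\in\mathcal{D}_n$ transports to $S_n$. For (ii), iterate $\mathfrak{d}S_j=jS_{j-1}$ to obtain $\mathfrak{d}^k(S_n)=(n)_k S_{n-k}$; then, writing $Q=D_Q(S_n)$, one gets $(n)_k S_{n-k}\boxplus_n^{\mathfrak{d}} Q=(\mathfrak{d}^k D_Q)(S_n)=\mathfrak{d}^k Q$. Claim (iii) is commutativity of $\mathcal{D}_n$ in disguise: $D(P\boxplus_n^{\mathfrak{d}} Q)=DD_PD_Q(S_n)=(DD_P)D_Q(S_n)=D_P(DD_Q)(S_n)$, which yields both equalities at once. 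Property (iv) follows by combining (iii) with the expansion of (vii), using the vanishing $\mathfrak{d}^{j}P=0$ for $j>n-k$ to truncate the resulting sums. For (v), since $\mathcal{D}_n\simeq \mathbb{C}[x]/(x^{n+1})$ is a local ring with maximal ideal $(\mathfrak{d})$, an element $D=a_0+a_1\mathfrak{d}+\cdots$ is invertible iff $a_0\neq 0$; but $D(S_n)=a_0 S_n+a_1(n)_1 S_{n-1}+\cdots$ has top degree $n$ precisely when $a_0\neq 0$.

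Item (vii) follows from umbral Taylor: expand $P(t)=\sum_{k=0}^n \tfrac{\mathfrak{d}^k P(0)}{k!}S_k(t)$, convolve termwise with $Q$, and apply (ii) in the form $S_k\boxplus_n^{\mathfrak{d}}Q=\tfrac{k!}{n!}\mathfrak{d}^{n-k}Q$. To deduce (vi), observe that the involution $\rho\colon P\mapsto P^{\dagger}$ satisfies $\rho^{2}=\mathrm{id}$ and, directly from the definition, $\mathfrak{d}^{*}=-\rho\,\mathfrak{d}\,\rho$; hence $(\mathfrak{d}^{*})^{n-k}=(-1)^{n-k}\rho\,\mathfrak{d}^{n-k}\rho$, and applied in the variable $t$ to $Q(z-t)$ this produces $(\mathfrak{d}^{*})_t^{n-k}Q(z-t)=(-1)^{n-k}\mathfrak{d}^{n-k}_z Q(z-t)$. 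Substituting into the umbral polarity pairing, the two signs $(-1)^{n-k}$ cancel, and evaluating at $t=0$ (legitimate by the constancy asserted in Theorem~\ref{masse}) reduces the expression to the sum appearing in (vii). Finally, (viii) is obtained by applying $\rho$ to the formula of (vii) and invoking the identity $(\mathfrak{d}^{*})^k P^{\dagger}(0)=(-1)^k \mathfrak{d}^k P(0)$, which is itself $\mathfrak{d}^{*}=-\rho\mathfrak{d}\rho$ evaluated at the origin (where $\rho$ acts trivially).

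The only real obstacle is the sign bookkeeping in (vi) and (viii), where four distinct ``flips'' interact: the change of variable $t\mapsto z-t$, the involution $\rho$, the star on differential operators, and the combinatorial sign $(-1)^{n-k}$ from the polarity pairing. Compressing everything into the single master identity $\mathfrak{d}^{*}=-\rho\,\mathfrak{d}\,\rho$ together with $\rho^{2}=\mathrm{id}$ makes these signs cancel cleanly, after which the remaining manipulations are mechanical substitutions.
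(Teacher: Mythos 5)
Your proof is correct in substance everywhere except at (iv), and it reorganizes the paper's argument rather than duplicating it. The paper anchors the chain at (vi): both sides are linear in $P$ and agree when $P=S_n(t-\alpha)$ by Theorem~\ref{masse}(iv), the translates $S_n(t-\alpha)$ spanning $\mathbb{C}_n[t]$; it then deduces (vii) from (vi) by exactly the sign computation you call the master identity, namely $(\mathfrak{d}^*)^m=(-1)^m\rho\,\mathfrak{d}^m\rho$ (where $\rho P=P^{\dagger}$) applied to $t\mapsto Q(z-t)$ at $t=0$. You invert the chain: (vii) is proved directly from the umbral Taylor formula (Proposition~\ref{taylor}) together with (ii), and (vi) is then recovered from (vii) by the same sign computation, the evaluation at $t=0$ being legitimate by the constancy of the pairing (Theorem~\ref{masse}). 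Both anchors are sound; yours trades the generating family of translates for the expansion $P(t)=\sum_k \frac{\mathfrak{d}^kP(0)}{k!}S_k(t)$. For (viii) the paper manipulates the pairing directly via Theorem~\ref{masse}(i), whereas you apply $\dagger$ to (vii); this works, with the small caveat that besides $(\mathfrak{d}^*)^kP^{\dagger}(0)=(-1)^k\mathfrak{d}^kP(0)$ you also need the unevaluated form $(\mathfrak{d}^*)^mQ^{\dagger}=(-1)^m(\mathfrak{d}^mQ)^{\dagger}$ on the second factor, which your master identity equally supplies. Your treatment of (i)--(iii) via transport from $\mathbb{C}[\mathfrak{d}]/(\mathfrak{d}^{n+1})$ is the intended ``straightforward'' route, and your local-ring argument for (v) actually fills in a point the paper's proof passes over in silence.

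The genuine defect is (iv). As printed the identity is false: take $\mathfrak{d}=\partial/\partial t$, $n=k=1$, $P=1$, $Q=t$; then $(1)_1\,P\boxplus_1 Q=Q'=1$ while $P\boxplus_1\mathfrak{d}Q=Q''=0$. The statement is evidently a misprint for the level-dropping identity $(n)_k\,P\boxplus_n^{\mathfrak{d}}Q=P\boxplus_{n-k}^{\mathfrak{d}}\mathfrak{d}^kQ$, in line with Theorem~\ref{masse}(ii)--(iii) and with how the property is actually used in the proof of Theorem~\ref{apdrift}. Your one-line argument---combine (iii) with the expansion of (vii) and truncate using $\mathfrak{d}^jP=0$ for $j>n-k$---cannot establish the printed version: carried out literally it yields $n!\,(P\boxplus_n^{\mathfrak{d}}\mathfrak{d}^kQ)=\sum_{j=k}^{n-k}\mathfrak{d}^jP(0)\,\mathfrak{d}^{n+k-j}Q$, which differs from $(n)_k\sum_{j=0}^{n-k}\mathfrak{d}^jP(0)\,\mathfrak{d}^{n-j}Q$ in general. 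The same truncation does prove the corrected identity: for $\deg P=n-k$ one has $n!\,(P\boxplus_n^{\mathfrak{d}}Q)=\sum_{j=0}^{n-k}\mathfrak{d}^jP(0)\,\mathfrak{d}^{(n-k)-j}(\mathfrak{d}^kQ)=(n-k)!\,(P\boxplus_{n-k}^{\mathfrak{d}}\mathfrak{d}^kQ)$, by (vii) at level $n-k$. So you should either flag the misprint and prove this corrected version, or acknowledge that your claim for (iv), as written, asserts an identity that fails.
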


\begin{proof}
Properties (i) to (iv) are straightforward. Property (vi) follows from the fact that the polynomials $S_n(t-\alpha)$ generate $\mathbb{C}_n[t]$.
If $P=S_n(t-\alpha)$, both sides are equal to $n! Q(z)$. To prove (vii), we write
\begin{align*}
(\mathfrak{d}^*)^{n-k} (P(z-t))_{|t=0}&=(\mathfrak{d}^*)^{n-k} (P(z+t)^{\dagger})_{|t=0}\\
&=(-1)^{n-k} (\mathfrak{d}^{n-k} P(z+t))^{\dagger}_{|t=0} \\
&=(-1)^{n-k} \mathfrak{d}^{n-k} P(z+t)_{|t=0} \\
&=(-1)^{n-k} \mathfrak{d}^{n-k} P(z)
\end{align*}
For property (viii), we have
\begin{align*}
(P \boxplus_n^{\mathfrak{d}} Q)^{\dagger}(z) &= \langle P(t), Q(-z-t) \rangle_n^{\mathfrak{d}} \\
&= (-1)^n \langle P(-t), Q(-z+t) \rangle_n^{\mathfrak{d}^*} \\
&= (-1)^n \langle P^{\dagger}(t), Q^{\dagger}(z-t) \rangle_n^{\mathfrak{d}^*} \\
&= (-1)^nP^{\dagger} \boxplus_n^{\mathfrak{d}^*} Q^{\dagger}(z).
\end{align*}
\end{proof} 

\begin{remark}
Additive convolution is a vectorial notion and depends on the basepoint $0$.
\end{remark}

\subsection{Deviation polynomials}
\begin{definition}
The deviation polynomial $R_n$ is the inverse of $S_n$ with respect to the standard additive convolution $\boxplus_n$; that is, $R_n \boxplus_n S_n = t^n$.
\end{definition}

\begin{proposition} \label{devv}
The classical and umbral polarity pairings and convolutions are related by the following formulas : 
\[
\begin{cases}
\langle \langle P, Q \rangle \rangle_n^{\mathfrak{d}}=\langle R_n \boxplus_n P, Q \rangle_n = (-1)^n \langle P, R_n^{\dagger} \boxplus_n Q \rangle_n \\
P \boxplus_n^{\mathfrak{d}} Q = R_n \boxplus_n P \boxplus_n Q
\end{cases}
\]
\end{proposition}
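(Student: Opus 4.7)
The plan is to establish the convolution identity first, and then extract both pairing identities as consequences via Proposition \ref{misc}(vi).

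For the convolution identity $P \boxplus_n^{\mathfrak{d}} Q = R_n \boxplus_n P \boxplus_n Q$, I would fix $P$ and view both sides as endomorphisms of $\mathbb{C}_n[t]$ in the variable $Q$. The left side is translation invariant by construction of umbral additive convolution, while the right side is translation invariant because the map $Q \mapsto X \boxplus_n Q$ is translation invariant for any $X$ (apply Proposition \ref{misc}(iii) with $D = \partial^k$). By the linear isomorphism $\mathcal{D}_n \simeq \mathbb{C}_n[t]$ given by $D \mapsto D(S_n)$, a translation-invariant endomorphism of $\mathbb{C}_n[t]$ is determined by its value at $S_n$. Evaluating both sides at $Q = S_n$ gives $P$ on each side: the left side because $S_n$ is the unit for $\boxplus_n^{\mathfrak{d}}$, and the right side because $R_n \boxplus_n S_n = t^n$ by the very definition of the deviation polynomial, so that $R_n \boxplus_n P \boxplus_n S_n = P \boxplus_n t^n = P$ by associativity and commutativity of $\boxplus_n$.

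For the pairing identities, I would specialize Proposition \ref{misc}(vi) to $z = 0$ and swap $Q \to Q^{\dagger}$ to obtain the clean form $\langle P, Q \rangle_n^{\mathfrak{d}} = n!\, (P \boxplus_n^{\mathfrak{d}} Q^{\dagger})(0)$, valid for every umbral structure, including the classical one. Combining this with the convolution identity just established gives
\[
\langle P, Q\rangle_n^{\mathfrak{d}} = n!\, \bigl((R_n \boxplus_n P) \boxplus_n Q^{\dagger}\bigr)(0) = \langle R_n \boxplus_n P, Q\rangle_n,
\]
which is the first identity. For the second, I would use commutativity and associativity of $\boxplus_n$ to rewrite the middle expression as $n!\,(P \boxplus_n (R_n \boxplus_n Q^{\dagger}))(0) = \langle P, (R_n \boxplus_n Q^{\dagger})^{\dagger}\rangle_n$, and then invoke Proposition \ref{misc}(viii) specialized to $\mathfrak{d}=\partial$ to compute $(R_n \boxplus_n Q^{\dagger})^{\dagger} = (-1)^n R_n^{\dagger} \boxplus_n Q$, producing the expected $(-1)^n$ factor.

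The main obstacle lies in the careful bookkeeping of the dagger involution across the two convolutions: Proposition \ref{misc}(viii) introduces a sign $(-1)^n$ that must be correctly tracked through the chain of rewrites, and it is precisely this sign that explains the asymmetry between the two presentations of the pairing. Otherwise the argument is entirely formal, resting on three ingredients already present in the paper: the characterization of translation-invariant endomorphisms of $\mathbb{C}_n[t]$ by their value at $S_n$, the associativity and commutativity of $\boxplus_n$, and the compatibility of $\dagger$ with classical additive convolution.
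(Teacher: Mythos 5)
Your proposal is correct and follows essentially the same route as the paper: the convolution identity is established by observing that both sides are translation-invariant endomorphisms in $Q$ and checking agreement on $S_n$, and the pairing identities are then deduced by substituting $Q \mapsto Q^{\dagger}$, evaluating at $0$, and tracking the $(-1)^n$ sign through the dagger involution. You simply make explicit the details (the role of Proposition \ref{misc}(vi) and (viii)) that the paper leaves implicit.
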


\begin{proof}
We begin with the second identity. Considered as endomorphism of the variable $Q$, boths sides are translation-invariant endomorphisms. Thus it suffices to prove that they agree on $S_n$, which is straightforward. For the first identities we replace $Q$ by $Q^{\dagger}$ and evaluate $t=0$, which yields the first equality. Writing
\[
R_n \boxplus_n P \boxplus_n Q^{\dagger} = P \boxplus_n ((-1)^n R_n^{\dagger} \boxplus_n Q)^{\dagger}
\]
gives the second equality.
\end{proof}
This allows to give an explicit formula for $R_n$ in terms of the umbral pairing : 

\begin{proposition} \label{toboggan}
$n! R_n(\alpha)=\langle t^n, (\alpha-t)^n \rangle^{\mathfrak{d}}_n$.
\end{proposition}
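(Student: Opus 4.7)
The plan is to assemble the statement from two identities proved just above: the integral representation of umbral convolution (Proposition \ref{misc}(vi)) and the translation formula between umbral and classical additive convolution via the deviation polynomial (Proposition \ref{devv}).

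First, I would specialize Proposition \ref{misc}(vi) to $P(t)=t^n$, $Q(t)=t^n$, $z=\alpha$, which immediately gives
\[
\langle t^n, (\alpha-t)^n\rangle_n^{\mathfrak{d}} = n!\,\bigl(t^n \boxplus_n^{\mathfrak{d}} t^n\bigr)(\alpha).
\]
So everything reduces to identifying the polynomial $t^n \boxplus_n^{\mathfrak{d}} t^n$ with $R_n$.

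Next, I would apply the second identity of Proposition \ref{devv} to the same pair $P=Q=t^n$:
\[
t^n \boxplus_n^{\mathfrak{d}} t^n = R_n \boxplus_n t^n \boxplus_n t^n.
\]
The key observation is that for the standard derivation the binomial sequence is $(t^k)_{0 \leq k \leq n}$, so its highest-degree term is $t^n$, and by Proposition \ref{misc}(i) this is precisely the neutral element of the classical convolution $\boxplus_n$. Consequently $R_n \boxplus_n t^n \boxplus_n t^n = R_n$.

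Combining the two steps yields $\langle t^n, (\alpha-t)^n\rangle_n^{\mathfrak{d}} = n!\,R_n(\alpha)$, as desired. There is no genuine obstacle here — the argument is a short chaining of earlier results — but one should be careful to keep track of which neutral element (the umbral $S_n$ versus the classical $t^n$) is being used at each step, since this is the point where the definition of $R_n$ as the $\boxplus_n$-inverse of $S_n$ enters.
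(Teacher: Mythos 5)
Your proof is correct and follows essentially the same route as the paper's: the paper's one-line proof is precisely the chain $\langle t^n,(\alpha-t)^n\rangle_n^{\mathfrak{d}} = n!\,(t^n\boxplus_n^{\mathfrak{d}}t^n)(\alpha) = n!\,R_n(\alpha)$ via Proposition \ref{misc}(vi). You merely make explicit the step the paper leaves implicit, namely that $t^n\boxplus_n^{\mathfrak{d}}t^n=R_n$ via Proposition \ref{devv} and the fact that $t^n$ is the neutral element of the classical $\boxplus_n$ (and you handle the $n!$ factor more carefully than the paper's displayed computation does).
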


\begin{proof}
\[
\langle t^n, (t-\alpha)^n \rangle^{\mathfrak{d}}_n =(t^n \boxplus_n^{\mathfrak{d}} t^n) (\alpha) = R_n(\alpha).
\]
\end{proof}

Just as a binomial sequence can be reconstructed from its highest-degree term, the sequence of deviation polynomials of lower-degree can be reconstructed from $R_n$ : 

\begin{theorem} \label{apdrift}
The deviation polynomials satisfy the following properties : 
\begin{enumerate}
\item[--] Recurrence relation : 
\[
R'_k=R_{k-1} \boxplus_{k-1} \mathfrak{d} t^k.
\]
\item[--] If we extend the umbral structure $\mathfrak{d}$ at level $n+1$, there exists an Appell sequence $(A_k)_{0 \leq k \leq n}$ such that
\[
R_k(t) = 
\underbrace{\frac{\mathfrak{d} t^{k+1}}{k+1} \boxplus_k \ldots \boxplus_k \frac{\mathfrak{d} t^{k+1}}{k+1}}_{k \, \textrm{times}} \boxplus_k A_k(t) = \left(\mathfrak{d}(\partial / \partial t)^{-1} \right)^k A_k(t).
\]
\end{enumerate}
\end{theorem}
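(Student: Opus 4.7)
The plan is to prove the two parts in order: first the recurrence by a straightforward differentiation argument, then the closed form via an operator identity derived from Lagrange inversion. Three preliminary facts will be used throughout: (a) the ordinary derivative $\partial := \partial/\partial t$ commutes with $\boxplus_k$ by Proposition~\ref{misc}(iii); (b) since $\deg S_k = k$, the map $P \mapsto P \boxplus_k S_k$ is a bijection of $\mathbb{C}_k[t]$; and (c) the lowering formula $k\,(P \boxplus_k Q) = P \boxplus_{k-1} \partial Q$ when $\deg P \leq k-1$, which drops out of the expansion $n!\,(P \boxplus_n Q)(z) = \sum_j P^{(j)}(0)\,Q^{(n-j)}(z)$ of Proposition~\ref{misc}(vii) because $P^{(j)}(0) = 0$ for $j \geq k$.

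For the recurrence, differentiate $R_k \boxplus_k S_k = t^k$ using (a) to get $R'_k \boxplus_k S_k = k t^{k-1}$, which by (b) characterizes $R'_k$ uniquely. Set $X := R_{k-1} \boxplus_{k-1} \mathfrak{d} t^k \in \mathbb{C}_{k-1}[t]$. By (c) and associativity,
\[
k(X \boxplus_k S_k) = X \boxplus_{k-1} S'_k = R_{k-1} \boxplus_{k-1} (\mathfrak{d} t^k \boxplus_{k-1} S'_k).
\]
Writing $\mathfrak{d} = \varphi(\partial) = \partial\,g(\partial)$ with $g(z) := \varphi(z)/z$, one has $\mathfrak{d} t^k = k\,g(\partial)\,t^{k-1}$, so the operator of $\boxplus_{k-1}$-convolution by $\mathfrak{d} t^k$ is exactly $k\,g(\partial)$. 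Hence
\[
\mathfrak{d} t^k \boxplus_{k-1} S'_k = k\,g(\partial)\,S'_k = k\,g(\partial)\,\partial S_k = k\,\varphi(\partial) S_k = k\,\mathfrak{d} S_k = k^2 S_{k-1},
\]
and using $R_{k-1} \boxplus_{k-1} S_{k-1} = t^{k-1}$ we get $X \boxplus_k S_k = k\, t^{k-1}$, whence $X = R'_k$.

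For the closed form, work in the algebra $\mathcal{D}_n \simeq (\mathbb{C}_n[t], \boxplus_n)$ via $D \mapsto D(t^n)$; denote by $D_P$ the operator sending $t^n$ to $P$. A direct computation from $A_n(t) = \sum_k (n!/k!)\,a_{n-k}\,t^k$ shows that for an Appell sequence with generating series $a(z)e^{tz}$, $D_{A_n} = a(\partial)$. From $\sum_n S_n z^n/n! = \exp(t\phi(z))$ with $\phi = \varphi^{-1}$, extract $S_n^{(j)}(0) = n!\,[z^n]\phi(z)^j$, giving $D_{S_n} = \sum_{j=0}^{n}[z^n]\phi(z)^{n-j}\,\partial^j$ modulo $\partial^{n+1}$. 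Applying Lagrange inversion, $[z^n]\phi(z)^{n-j} = \tfrac{n-j}{n}[w^j](w/\varphi(w))^n$, and setting $h := 1/g$, one rewrites
\[
D_{S_n} = h(\partial)^n - \partial\,h(\partial)^{n-1} h'(\partial).
\]
Using $h' = -g'/g^2$ together with $\varphi' = g + zg'$ collapses this to $\varphi'(\partial)/g(\partial)^{n+1}$. Inverting in $\mathcal{D}_n$ and noting $a(z) = g(z)/\varphi'(z)$,
\[
D_{R_n} = \frac{g(\partial)^{n+1}}{\varphi'(\partial)} = g(\partial)^n\,a(\partial) = g(\partial)^n\,D_{A_n},
\]
so $R_n = D_{R_n}(t^n) = g(\partial)^n A_n = (\mathfrak{d}\,(\partial/\partial t)^{-1})^n A_n$. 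The $\boxplus_n$-form is immediate from the dictionary $D \leftrightarrow D(t^n)$: since $g(\partial)\,t^n = \mathfrak{d}\,t^{n+1}/(n+1)$, the operator factorization $g(\partial)^n\,D_{A_n}$ corresponds to iterated $\boxplus_n$-convolution with $\mathfrak{d} t^{n+1}/(n+1)$ followed by $\boxplus_n A_n$.

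The main obstacle is the Lagrange inversion step, where one must track the truncation modulo $\partial^{n+1}$ carefully (in particular checking that the missing $\partial^n$ coefficient of $D_{S_n}$ is consistent with $[z^{-1}](\varphi^{-n})' = 0$) and then verify the formal identity $h^n - zh^{n-1}h' = \varphi'/g^{n+1}$. Everything else is bookkeeping within the correspondence between translation-invariant operators and the $\boxplus_n$-algebra.
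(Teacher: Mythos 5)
Your proof is correct, but it takes a genuinely different route from the paper's, most notably for the second assertion. For the recurrence, the paper simply applies $\mathfrak{d}$ to $R_k \boxplus_k S_k = t^k$, getting $k\,R_k \boxplus_k S_{k-1} = \mathfrak{d}t^k$, lowers the level to rewrite this as $R'_k \boxplus_{k-1} S_{k-1} = \mathfrak{d}t^k$, and convolves with $R_{k-1}$; you differentiate by $\partial/\partial t$ instead and verify the candidate $R_{k-1}\boxplus_{k-1}\mathfrak{d}t^k$ through the operator dictionary $\mathfrak{d}t^k \boxplus_{k-1}(\cdot) = k\,g(\partial)$ with $g(z)=\varphi(z)/z$ --- equivalent, just a bit heavier. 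For the closed form, the divergence is real: the paper stays elementary and structural, defining $A_k$ uniquely by inverting the convolution with the invertible element $\mathfrak{d}t^{k+1}/(k+1)$, then using the part-1 recurrence in a telescoping computation to establish the Appell property $A'_k = k A_{k-1}$, together with the primitive-based lemma identifying $\frac{\mathfrak{d}t^{k+1}}{k+1}\boxplus_k(\cdot)$ with $\mathfrak{d}(\partial/\partial t)^{-1}$; it asserts only \emph{existence} of the Appell sequence and defers its identification to Theorem~\ref{lagrange} in \S\ref{inversion}. You instead compute the operator $D_{S_n}$ representing standard convolution by $S_n$, invert it modulo $\partial^{n+1}$ via Lagrange inversion and the formal identity $h^n - z h^{n-1}h' = \varphi'/g^{n+1}$ (which checks out: with $h=1/g$ one gets $g^{-n}+zg'g^{-(n+1)}=(g+zg')/g^{n+1}$ and $\varphi'=g+zg'$), obtaining $D_{R_n} = g(\partial)^n a(\partial)$ with $a = g/\varphi'$. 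This buys strictly more: an explicit Appell sequence with generating series $a(z)e^{tz}=\exp(tz)/\bigl(z\,\mathrm{D}_{\log}(\varphi(z))\bigr)$, so you prove Theorem~\ref{lagrange} en route, and your part 2 becomes logically independent of part 1 --- at the price of generating series and truncation bookkeeping, which the paper deliberately avoids at this stage. Your handling of the truncation is sound, including the edge coefficient: $[w^n](h^n - wh^{n-1}h') = [w^n]h^n - \tfrac{1}{n}[w^{n-1}](h^n)' = 0$, matching $[z^n]\phi^0=0$; and fixing the power series $\varphi$ amounts exactly to choosing the extension of $\mathfrak{d}$ beyond level $n$, consistent with the paper's remark that different extensions shift $A_n$.
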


\begin{remark}
Different choices of extension of $\mathfrak{d}$ differ by a multiple of $\partial^{n+1} / \partial t^{n+1}$, which give different possible values for $A_n$. 
\end{remark}

\begin{remark}
Since $\mathfrak{d}(1)=0$, $\mathfrak{d}(\partial / \partial t)^{-1}$ is well defined on $\mathbb{C}_n[t]$.
\end{remark}

\begin{proof}

\par \medskip
We applying $\mathfrak{d}$ to the equality $R_k(t) \boxplus_k S_k(t)=t^k$. This gives
\[
k R_k(t) \boxplus_{k} S_{k-1}(t)=\mathfrak{d} t^{k}
\]
that is
\[
R'_k(t) \boxplus_{k-1} S_{k-1}(t)= \mathfrak{d} t^{k}.
\]
Taking the convolution with $R_{k-1}(t)$ gives the fist relation.
Since $\mathfrak{d} t^{k+1}$ is invertible, $A_k$ is uniquely defined by the second property, and we get 
\begin{align*}
R'_k(t) &= \underbrace{\frac{\mathfrak{d} t^{k+1}}{k+1} \boxplus_k \ldots \boxplus_k \frac{\mathfrak{d} t^{k+1}}{k+1}}_{k \, \textrm{times}} \boxplus_k A'_k(t) \\
&= \frac{1}{k} \left( \underbrace{\frac{\mathfrak{d} t^{k+1}}{k+1} \boxplus_k \ldots \boxplus_k \frac{\mathfrak{d} t^{k+1}}{k+1}}_{k \, \textrm{times}}\right)' \boxplus_{k-1} A'_k(t) \\
&=\left(\frac{\mathfrak{d}t^k}{k} \boxplus_k   \underbrace{\frac{\mathfrak{d} t^{k+1}}{k+1} \boxplus_k \ldots \boxplus_k \frac{\mathfrak{d} t^{k+1}}{k+1}}_{k-1 \, \textrm{times}}\right) \boxplus_{k-1} A'_k(t) \\
&= \frac{\mathfrak{d}t^k}{k} \boxplus_{k-1} \frac{1}{k} \left(   \underbrace{\frac{\mathfrak{d} t^{k+1}}{k+1} \boxplus_k \ldots \boxplus_k \frac{\mathfrak{d} t^{k+1}}{k+1}}_{k-1 \, \textrm{times}}\right)' \boxplus_{k-1} A'_k(t) \\
&= \ldots \\
&=  \underbrace{\frac{\mathfrak{d}t^k}{k} \boxplus_{k-1} \ldots \boxplus_{k-1}  \frac{\mathfrak{d}t^k}{k}}_{k \, \textrm{times}} \boxplus_{k} A'_k(t)
\end{align*}
so $A'_k(t)=k A_{k-1}(t)$. It remains to prove that $\dfrac{\mathfrak{d}t^{k+1}}{k+1} \boxplus_k (\star) = \mathfrak{d}(\partial / \partial t)^{-1}$. To prove it, let $P$ be in $\mathbb{C}_k[t]$ and $\widehat{P}$ be a primitive of $P$. Then
\begin{align*}
\dfrac{\mathfrak{d}t^{k+1}}{k+1} \boxplus_k P &= \mathfrak{d}t^{k+1} \boxplus_k \dfrac{\partial\widehat{P}/\partial t}{k+1} \\
&=\mathfrak{d}t^{k+1} \boxplus_{k+1} \widehat{P} \\
&=t^{k+1} \boxplus_{k+1} \mathfrak{d} \widehat{P} \\
&=\mathfrak{d} \widehat{P} \\
&=\mathfrak{d}(\partial / \partial t)^{-1} (P).
\end{align*}

\end{proof}

\begin{example} \label{abel}
Let us consider the case of Abel polynomials, that is $S_n(t)=t(t-an)^{n-1}$ and $\mathfrak{d}(P)=P'(t+a)$. In this case, we claim that
\[
A_n(t)=\sum_{k=0}^{n-1} (-1)^{k} (n)_k  a^{k} t^{n-k} \\
\]
so that
\[
R_n(t)=\sum_{k=0}^{n-1} (-1)^{k} (n)_k  a^{k} (t-an)^{n-k}.
\]
This means that 
\[
t(t-an)^{n-1} \boxplus_n (t+an)^{n} \boxplus_n \left( \sum_{k=0}^n 	(-1)^{k} (n)_k a^{k} t^{n-k} \right) =  t^n.
\]
We do it by direct calculation. Using the formulas
\[
\begin{cases} 
(t+\alpha)^n \boxplus_n P(t)=P(t-\alpha) \\
n (t+\alpha)^{n-1} \boxplus_n P(t) = P'(t-\alpha)
\end{cases}
\]
we get
\begin{align*}
&t(t-an)^{n-1} \boxplus_n (t+an)^{n} \boxplus_n \left( \sum_{k=0}^n 	(-1)^{k} (n)_k a^{k} t^{n-k} \right) \\
&= \left((t-an)^{n}+an(t+an)^{n-1}\right) \boxplus_n \left( \sum_{k=0}^n (-1)^{k} (n)_k a^{k} (t-an)^{n-k} \right) \\
&=\sum_{k=0}^n 	(-1)^{k} (n)_k a^{k} t^{n-k}- \sum_{k=0}^n (-1)^{k+1} (n)_{k+1} a^{k+1} t^{n-(k+1)}=t^n.
\end{align*}
\end{example}

\subsection{An inversion theorem} \label{inversion}

\begin{theorem} \label{lagrange}
Let $(S_n)_{n \geq 0}$ be a binomial sequence associated with a delta operator $\mathfrak{d}=\varphi(\partial / \partial t)$ where $\varphi \in \mathbb{C}[[z]]$ satisfies $\varphi(0)=0$, $\varphi'(0)\neq 0$.
\par \medskip
If $(A_n)_{n \geq 0}$ is the Appell sequence defined by  $A_n(t)=\left(\mathfrak{d} (\partial / \partial t)^{-1} \right)^{-n} R_n(t)$, then 
\[
\sum_{n=0}^{+\infty} \frac{A_n(t)}{n!} z^n=\frac{\exp(tz)}{z\mathrm{D}_{\log}(\varphi(z))}\cdot
\]
\end{theorem}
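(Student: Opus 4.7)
The plan is to interpret the defining equation $R_n \boxplus_n S_n = t^n$ as an identity between Appell sequences, and then translate it into a multiplicative identity between their associated power series.

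First, the proof of Theorem \ref{apdrift} already establishes that the polynomials defined by $R_k = \psi^k A_k$, with $\psi := \mathfrak{d}(\partial/\partial t)^{-1} = \varphi(\partial/\partial t)/(\partial/\partial t)$, satisfy the Appell recursion $A_k' = k A_{k-1}$; after extending $\mathfrak{d}$ consistently to all levels, they assemble into a genuine Appell sequence $(A_n)_{n \ge 0}$ with $A_0 = 1$, so that
\[
\sum_{n=0}^{+\infty} \frac{A_n(t)}{n!}\, z^n = a(z)\, \exp(tz)
\]
for a unique power series $a(z)$ with $a(0) = 1$. It remains to identify $a(z) = 1/(z D_{\log}\varphi(z))$. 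Substituting $R_n = \psi^n A_n$ into $R_n \boxplus_n S_n = t^n$ and using Proposition \ref{misc}(iii) to move $\psi^n$ across the convolution yields the key equation
\[
A_n \boxplus_n \psi^n S_n = t^n.
\]

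The heart of the argument is then to identify $(\psi^n S_n)_{n \ge 0}$ as an Appell sequence and compute its generating function. Since $\psi$ acts on $\exp(t\alpha)$ as multiplication by $\varphi(\alpha)/\alpha$, and $\sum_n S_n(t) z^n/n! = \exp(t\phi(z))$ with $\phi = \varphi^{-1}$, applying $\psi^n$ and using $\varphi(\phi(z)) = z$ gives
\[
\psi^n S_n(t) = n!\, [z^n]\, (z/\phi(z))^n \exp(t\phi(z)).
\]
Representing this coefficient extraction by a contour integral around $z = 0$ and summing the geometric series $\sum_n (w/\phi(z))^n$ turns the generating series of $(\psi^n S_n)$ into $(1/(2\pi i))\oint \phi(z) \exp(t\phi(z))/(z(\phi(z)-w))\, dz$. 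The substitution $u = \phi(z)$, equivalent to Lagrange inversion, collapses this to a single simple-pole residue at $u = w$, producing
\[
\sum_{n=0}^{+\infty} \frac{\psi^n S_n(t)}{n!}\, w^n = \frac{w\varphi'(w)}{\varphi(w)}\, \exp(tw) = w D_{\log}\varphi(w)\, \exp(tw).
\]
Thus $(\psi^n S_n)$ is Appell with associated unit $w D_{\log}\varphi(w)$, whose value at $w = 0$ is $1$.

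To conclude, I invoke the elementary fact that the level-wise $\boxplus_n$ product of two Appell sequences is again Appell, with associated power series equal to the product of the two (a direct consequence of Proposition \ref{misc}(vii) combined with the Appell recursion $\partial^k P_n = (n)_k P_{n-k}$, and recovering the Appell group structure recalled in the generating series subsection). Applied to $A_n \boxplus_n \psi^n S_n = t^n$, whose right-hand side is the Appell sequence with associated series $1$, this gives $a(z) \cdot z D_{\log}\varphi(z) = 1$, proving the theorem. The main obstacle I expect is the diagonal extraction in the middle step: one must carefully justify the interchange of sum and residue, and check that the change of variables $u = \phi(z)$ does not produce a residue at $u = 0$, so that the only contribution comes from the simple pole at $u = w$.
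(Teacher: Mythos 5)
Your proof is correct and follows essentially the same route as the paper's: both reduce the theorem to showing that $B_n=\left(\mathfrak{d}(\partial/\partial t)^{-1}\right)^{n}S_n$ is an Appell sequence whose associated unit is $z\,\mathrm{D}_{\log}(\varphi(z))$, and then invert (via $A_n \boxplus_n B_n = t^n$) using the identification of level-wise additive convolution with the Appell group law. The only difference is mechanical: where the paper applies $\mathfrak{d}(\partial/\partial t)^{-1}$ once more to the generating series and cites the Lagrange inversion formula $[w^n]\left(w/\phi(w)\right)^{n+1}=(n+1)c_{n+1}$ at $t=0$, you re-derive the same coefficient identity by the standard geometric-series and residue substitution $u=\phi(z)$ (i.e.\ you reprove the needed instance of Lagrange inversion), which is legitimate and, phrased with formal residues, makes the analytic caveats you raise harmless.
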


\begin{proof}
We have 
\[
\sum_{n=0}^{+\infty} \frac{S_n(t)}{n!} z^n=\exp(t \phi(z)),
\]
where $\phi=\varphi^{-1}$.
Let $B_n(t)=\left(\mathfrak{d} (\partial / \partial t)^{-1} \right)^{n} S_n(t)$. Then $(B_n)_{n \geq 0}$ is an Appell sequence, since
\begin{align*}
B'_n(t)&=\partial / \partial t \left(\mathfrak{d} (\partial / \partial t)^{-1} \right)^{n} S_n(t) \\
& = \left(\mathfrak{d} (\partial / \partial t)^{-1} \right)^{n-1} \mathfrak{d}S_n(t) \\
& = n \left(\mathfrak{d} (\partial / \partial t)^{-1} \right)^{n-1} S_{n-1}(t) \\
& = n B_{n-1}(t).
\end{align*}
The sequences $(A_n)_{n \geq 0}$ and $(B_n)_{n \geq 0}$ are inverse to each other. Indeed, since $(B_n)_{n \geq 0}$ is an Appell sequence, inverting it within the group of Appell sequences is the same as inverting it with respect to additive convolution. We now compute: 
\begin{align*}
\left(\mathfrak{d}(\partial / \partial t)^{-1} \right)\left(\sum_{n=0}^{+\infty} \frac{B_n(t)}{n!} z^n\right) &= \sum_{n=0}^{+\infty} \frac{\left(\mathfrak{d}(\partial / \partial t)^{-1} \right)^{n+1} S_n(t)}{n!} z^n \\
&=\sum_{n=0}^{+\infty} \left(\frac{\phi^{-1}(\partial / \partial t)}{\partial / \partial t} \right)^{n+1} [w^n](\exp(t \phi(w)) z^n \\
&= \sum_{n=0}^{+\infty} [w^n] \left(\left( \frac{w}{\phi(w)}\right)^{n+1} \exp(t \phi(w))\right) z^n.
\end{align*}
If $\phi^{-1}(z)=\sum_{n=1}^{+\infty} c_n z^n$, then by Lagrange inversion formula,
\[
[w^n] \left( \frac{w}{\phi(w)}\right)^{n+1} = (n+1)c_{n+1}. 
\]
Hence, setting $t=0$, we obtain
$
\sum_{n=0}^{+\infty} (n+1) c_{n+1} z^n = (\phi^{-1})'(z).
$
On the other hand, 
\begin{align*}
\left(\mathfrak{d}(\partial / \partial t)^{-1} \right)\left(\sum_{n=0}^{+\infty} \frac{B_n(t)}{n!} z^n\right) &= \left(\mathfrak{d}(\partial / \partial t)^{-1} \right) (b(z)\exp(tz)) \\
&=b(z) \frac{\phi^{-1}(z)}{z} \exp(tz).
\end{align*}
Hence $b(z)=z \dfrac{ (\phi^{-1}(z))'}{\phi^{-1}(z)}$, which yields the result.
\end{proof}

\begin{example} \label{yoo} $ $ \par
\begin{enumerate}
\item[(i)] $\mathfrak{d}=\Delta$. Then $S_n(t)=(t)_n$ and $\varphi(z)=e^{z}-1$ so
\[
\frac{1}{z \mathrm{D}_{\mathrm{log}}(\varphi(z))}
=\frac{e^z-1}{ze^{z}}
=\frac{1-e^{-z}}{z} \cdot
\]
Hence 
\begin{align*}
\frac{1-e^{-z}}{z} \exp(tz) &= \frac{\exp(z(t-1))-\exp(tz)}{z} \\
&= \sum_{n=1}^{+\infty} \frac{t^n-(t-1)^n}{n!} z^{n-1} \\
&=\sum_{n=0}^{+\infty} \frac{t^{n+1}-(t-1)^{n+1}}{(n+1)} \times \frac{z^{n}}{n!}
\end{align*}
so $A_n(t)=\dfrac{\Lambda t^{n+1}}{n+1} \cdot$ where $\Lambda$ is the backward difference operator.
\item[(ii)] $\mathfrak{d}(P)=P'(t+a)$. Then $S_n(t)=t(t-an)^{n-1}$ are the Abel polynomials and $\varphi(z)=z\exp(az)$ so 
\[
z\mathrm{D}_{\mathrm{log}}(\varphi(z))=1+az.
\]
Hence $A_n(0)=(-1)^n n! a^n$ so
\begin{align*}
A_n(t)&=\sum_{k=0}^n \binom{n}{k}A_k(0) t^{n-k} \\
&=\sum_{k=0}^n \binom{n}{k} (-1)^k k ! a^k t^{n-k} \\
&=\sum_{k=0}^n (-1)^k (n)_k a^k t^{n-k}
\end{align*}
which gives the formula obtained in example \ref{abel}.
\item[(iii)] $ \mathfrak{d}=\log \left(1+\partial / \partial t \right)$. The corresponding binomial sequence is the Touchard (or exponential) polynomials $T_n(t)$ given by 
\[
T_n(t)=\sum_{k=0}^n S(n, k) t^k
\] 
where $S(n, k)$ are the Stirling numbers of the second kind. 
Then 
\[
z \mathrm{D}_{\mathrm{log}}(\varphi(z)) = \dfrac{z}{(1+z) \log(1+z)}
\]
and since
\begin{align*}
\dfrac{(1+z) \log(1+z)}{z} &= (1+z) \times \sum_{n=0}^{+\infty} (-1)^{n} \frac{z^n}{n+1} \\
&= 1 + \sum_{n=1}^{+\infty} (-1)^{n+1} \frac{z^n}{n(n+1)}
\end{align*}
so $A_0=1$ and $A_n(0)=(-1)^{n+1} (n-1)!/(n+1)$.
\end{enumerate}
\end{example}

\section{Duality for Vandermonde curves}
We take the same setting as in the preceding section; namely an umbral structure $\mathfrak{d}$ of level $n$. We denote by $\eta=(S_k)_{0 \leq k \leq n}$ the binomial sequence attached to $\mathfrak{d}$. 

\subsection{Wronskian and duality}

\begin{definition}
If $\theta=(P_k)_{0 \leq k \leq n}$ is a Vandermonde curve, we define its umbral Wronskian $W^{\mathfrak{d}}_{\theta}$ in $\mathbb{C}_n(t) \otimes_{\mathbb{C}}(\mathbb{C}^{n+1})^*$ by
\[
W^{\mathfrak{d}}(-t, v)= \frac{\theta(t) \wedge \mathfrak{d} \theta(t) \wedge \ldots \wedge \mathfrak{d}^{n-1} \theta (t) \wedge v}{\theta(t) \wedge \mathfrak{d} \theta(t) \wedge \ldots \wedge \mathfrak{d}^{n} \theta((t)}\cdot
\]
\end{definition}

\begin{proposition} \label{key}
For any $T$ in $\mathrm{T}_{n+1}(\mathbb{C})$, if $\theta = T. \eta$, 
\[
W^{\mathfrak{d}}_{\theta}(t, v)=\frac{1}{n!} \sum_{k=0}^n \binom{n}{k} (T^{-1}v)_{n-k} S_k(t).
\]
\end{proposition}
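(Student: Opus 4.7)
The plan is to first reduce the statement to the case $\theta=\eta$, and then to identify the matrix built from $\eta$ and its $\mathfrak{d}$-iterates with the convolution matrix $M(\eta(t))$, after which the claim becomes a Cramer-style last-coordinate extraction.

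For the reduction, I would exploit that $T\in \mathrm{T}_{n+1}(\mathbb{C})$ has constant entries and so commutes with $\mathfrak{d}$, which acts componentwise. Thus $\mathfrak{d}^j\theta(t)=T\,\mathfrak{d}^j\eta(t)$ for every $j$. By multilinearity of the determinant, $\det(T)$ factors out of both the numerator and the denominator of the defining ratio of $W^{\mathfrak{d}}_\theta(-t,v)$; the only column not touched by this factoring is the last column $v$ of the numerator, which must be replaced by $T^{-1}v$. Consequently
\[
W^{\mathfrak{d}}_\theta(-t,v)=W^{\mathfrak{d}}_\eta(-t,T^{-1}v),
\]
and it suffices to establish the formula in the case $T=I$.

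Next, I would compute the $(n+1)\times(n+1)$ matrix $A(t):=\bigl[\eta(t)\,|\,\mathfrak{d}\eta(t)\,|\,\cdots\,|\,\mathfrak{d}^n\eta(t)\bigr]$. From $\mathfrak{d}^kS_j=(j)_k S_{j-k}$, the $(j,k)$ entry of $A(t)$ is $(j)_k S_{j-k}(t)=k!\binom{j}{k}S_{j-k}(t)$, so $A(t)=M(\eta(t))\cdot D$ with $D=\mathrm{diag}(0!,1!,\dots,n!)$. Since $M(\eta(t))$ is lower-unipotent (because $S_0=1$), $\det A(t)=\prod_{k=0}^n k!$ is a nonzero constant and $A(t)^{-1}=D^{-1}M(\eta(t))^{-1}$. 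Because $M$ is an algebra morphism and $\eta(t)\star\eta(-t)=\eta(0)=e_0$, one has $M(\eta(t))^{-1}=M(\eta(-t))$.

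Finally, the defining ratio for $W^{\mathfrak{d}}_\eta(-t,v)$ is precisely the Cramer-type extraction of the last coordinate of $A(t)^{-1}v$, since replacing the last column of $A(t)$ by $v$ and dividing by $\det A(t)$ returns $(A(t)^{-1}v)_n$. Therefore
\[
W^{\mathfrak{d}}_\eta(-t,v)=\bigl(D^{-1}M(\eta(-t))v\bigr)_n=\frac{1}{n!}\sum_{j=0}^{n}\binom{n}{j}S_{n-j}(-t)\,v_j.
\]
Re-indexing with $k=n-j$ and substituting $t\leadsto -t$ gives $W^{\mathfrak{d}}_\eta(t,v)=\frac{1}{n!}\sum_{k=0}^n\binom{n}{k}v_{n-k}S_k(t)$, which combined with the reduction yields the announced formula. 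The only real obstacle is bookkeeping: correctly recognizing the scaled convolution matrix in the indexing of $A(t)$, and carefully tracking the sign in the definition of $W^{\mathfrak{d}}$ so that the variable change produces $S_k(t)$ rather than $S_k(-t)$ in the final expression.
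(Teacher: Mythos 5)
Your proof is correct and follows essentially the same route as the paper: both identify the column matrix $A(\eta,t)=M(\eta(t))D$ with $D=\mathrm{diag}(0!,\dots,n!)$, read off the Wronskian as the last coordinate $(A(\theta,-t)^{-1}v)_n$ via Cramer's rule, and use $M(\eta(t))^{-1}=M(\eta(-t))$. Your preliminary reduction to $T=I$ by factoring $\det T$ out of numerator and denominator is just a repackaging of the paper's single line $A(\theta,t)=T\,A(\eta,t)$, so there is no substantive difference.
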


\begin{proof}
Let $A(\theta,t)$ be the matrix whose columns are $\theta(t), \mathfrak{d} \theta(t), \ldots, \mathfrak{d}^n \theta(t)$, and write $\theta=T.\eta$ for some $T$ in $\mathrm{T}_{n+1}(\mathbb{C})$. Since the delta operator $\mathfrak{d}$ vanishes on constants, we have 
$
A(\theta, t)=T. A(\eta, t).
$
As
\[
A(\eta, t)_{ij}=\mathfrak{d}^j S_i(t)=(i)_j S_{i-j}(t) = j! \binom{i}{j}  S_{i-j}(t)
\]
we see that $A(\eta, t)=M(\eta(t)) D$, where $D$ is the diagonal matrix given by $D_{ii}=i!$. By Cramer's rule, 
\begin{align*}
W^{\mathfrak{d}}_{\theta}(t, v)&= (A(\theta, -t)^{-1}v)_n \\
&=(D^{-1}M(\eta(-t))^{-1} T^{-1}v)_n \\
&=\frac{1}{n!} (M(\eta(t)) T^{-1}v)_n \\
&= \frac{1}{n!} (T^{-1}v  \star \eta(t))_n.
\end{align*}
\end{proof}

\begin{corollary} \label{uw} The umbral Wronskian satisfies the following properties:
\begin{enumerate}
\item[(i)] $n! W^{\mathfrak{d}}_{\theta}(t, \theta(s))=S_n(t+s).$
\item[(ii)] Foy any $\ell$ in $\llbracket 0,n \rrbracket$, $\langle P_{\ell}, W^{\mathfrak{d}}_{\theta}(-t, v)\rangle^{\mathfrak{d}}_n=v_{\ell}.$
\end{enumerate}
\end{corollary}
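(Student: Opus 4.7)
\medskip

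\textbf{Plan of proof.} Both parts will follow from the explicit expansion furnished by Proposition \ref{key}, namely
\[
n! W^{\mathfrak{d}}_{\theta}(t, v)=\sum_{k=0}^n \binom{n}{k} (T^{-1}v)_{n-k}\, S_k(t), \qquad \theta=T.\eta,
\]
together with the orthogonality $\langle S_p, S_q^{\dagger}\rangle_n^{\mathfrak{d}}=p!\,q!\,\delta_{p+q,n}$ from Theorem \ref{masse}(v) and the binomial property of $\eta$.

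\medskip

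For \textbf{(i)}, the plan is to feed $v=\theta(s)$ into the formula above. Since $\theta(s)=T.\eta(s)$, we have $T^{-1}\theta(s)=\eta(s)$, so $(T^{-1}\theta(s))_{n-k}=S_{n-k}(s)$. This gives
\[
n! W^{\mathfrak{d}}_{\theta}(t,\theta(s))=\sum_{k=0}^n \binom{n}{k} S_{n-k}(s) S_k(t),
\]
which is exactly $S_n(t+s)$ by the binomial identity characterising $\eta$ (Proposition \ref{building}(ii)).

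\medskip

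For \textbf{(ii)}, the plan is again to expand everything and reduce to the orthogonality relation. Writing $\theta=T.\eta$ with $T$ lower-triangular gives $P_\ell=\sum_{j=0}^{\ell} T_{\ell j} S_j$. Using Proposition \ref{key} with $t$ replaced by $-t$, we have
\[
W^{\mathfrak{d}}_{\theta}(-t,v)=\frac{1}{n!}\sum_{k=0}^n \binom{n}{k}(T^{-1}v)_{n-k}\,S_k^{\dagger}(t).
\]
Bilinearity of the pairing then yields
\[
\langle P_\ell, W^{\mathfrak{d}}_{\theta}(-t,v)\rangle_n^{\mathfrak{d}}=\frac{1}{n!}\sum_{j,k} T_{\ell j}\binom{n}{k}(T^{-1}v)_{n-k}\,\langle S_j, S_k^{\dagger}\rangle_n^{\mathfrak{d}}.
\]
Applying Theorem \ref{masse}(v) collapses the sum to the diagonal $k=n-j$, and the combinatorial factor $\binom{n}{j} j!(n-j)!=n!$ cancels the prefactor, leaving $\sum_{j=0}^{\ell} T_{\ell j}(T^{-1}v)_j$. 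Since $T$ is lower-triangular, this partial sum coincides with the full matrix product $(T\cdot T^{-1}v)_\ell=v_\ell$, giving the claim.

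\medskip

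There is essentially no obstacle: both statements are bookkeeping consequences of Proposition \ref{key} combined with the binomial identity for $\eta$ in (i) and the orthogonality of $(S_j)$ and $(S_k^{\dagger})$ in (ii). The only point requiring mild care is the lower-triangularity of $T$ in (ii), which ensures that the truncated sum $\sum_{j=0}^{\ell} T_{\ell j}(T^{-1}v)_j$ really represents the $\ell$-th coordinate of $T\cdot T^{-1}v$.
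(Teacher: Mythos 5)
Your proof is correct and follows essentially the same route as the paper: part (ii) is the identical computation (expand $P_\ell$ via $T$, expand the Wronskian via Proposition \ref{key}, apply the orthogonality $\langle S_j, S_k^{\dagger}\rangle_n^{\mathfrak{d}}=j!\,k!\,\delta_{j+k,n}$, and collapse to $(TT^{-1}v)_\ell$). For (i) the paper instead invokes the umbral Taylor formula to expand $\theta(s)$ in the basis $\bigl(\mathfrak{d}^k\theta(-t)/k!\bigr)_k$ and reads off the top coefficient from the definition of the Wronskian, but your substitution $v=\theta(s)$ into Proposition \ref{key} combined with the binomial identity for $\eta$ is an equivalent and equally direct argument.
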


\begin{proof}
The first identity follows directly from proposition \ref{taylor} since 
\[
\theta(s)=\sum_{k=0}^n \frac{\mathfrak{d}^k \theta(-t)}{k!} S_k(s+t).
\]
For the second identity, we compute : 
\begin{align*}
\langle P_{\ell}(t), W^{\mathfrak{d}}_{\theta}(-t, v)\rangle^{\mathfrak{d}}_n &=\langle (T.\eta(t))_{\ell},  W^{\mathfrak{d}}_{\theta}(-t, v)\rangle^{\mathfrak{d}}_n \\
&=\frac{1}{n!} \sum_{j=0}^n  T_{\ell j} \sum_{k=0}^n \binom{n}{k} \sum_{q=0}^{n}   T^{-1}_{(n-k) q} \langle S_j(t), S_{k}(-t)\rangle^{\mathfrak{\delta}}_n v_q  \\
&= \frac{1}{n!}\sum_{j=0}^n  T_{\ell j} \sum_{k=0}^n \binom{n}{k} \sum_{q=0}^{n}   T^{-1}_{(n-k) q} \delta_{j+k, n} j! k!  v_q  \\
&=\sum_{j=0}^n  T_{\ell j} \sum_{q=0}^{n}   T^{-1}_{j q} v_q \\
&=\delta_{\ell q} v_q \\
&=v_{\ell}.
\end{align*}
\end{proof}

\begin{definition}
If $\theta=(P_k)_{0 \leq q \leq n}$ is a Vandermonde curve, we define its $\mathfrak{d}$-dual $\theta^*:=(Q_k)_{0 \leq k \leq n}$ as follows:
\[
Q_k(t)=k! (n-k)! \times W_{\theta}^{\mathfrak{d}}(t, e_{n-k}).
\]
\end{definition}

\begin{theorem}
If $\theta=(P_k)_{0 \leq k \leq n}$ is a Vandermonde curve,then : 
\begin{enumerate}
\item[(i)] The dual curve $\theta^*=(Q_k)_{0 \leq k \leq n}$ is also a Vandermonde curve.
\item[(ii)] 
$
\langle P_a, Q_{b}^{\dagger}\rangle^{\mathfrak{d}}_n= a! b! \delta_{a+b, n}.
$
\item[(iii)] Duality is an involution given by $T.\eta \mapsto (D^{-1}T^{-1}D). \eta$ where $D$ is the diagonal matrix with binomial coefficients $\binom{n}{k}$. 
\end{enumerate}
\end{theorem}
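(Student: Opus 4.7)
The proof rests on the formula from Proposition \ref{key}. Substituting $v = e_{n-k}$ there and multiplying by $k!(n-k)!$ yields the explicit expansion
\[
Q_k(t) = \sum_{j=0}^n \frac{\binom{n}{j}}{\binom{n}{k}}(T^{-1})_{n-j,\, n-k}\, S_j(t),
\]
which will be the workhorse for parts (i) and (iii); part (ii) will fall out directly of Corollary \ref{uw}(ii).

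For (i), I use that $T \in \mathrm{T}_{n+1}(\mathbb{C})$ is lower triangular with nonzero diagonal, hence so is $T^{-1}$. The entry $(T^{-1})_{n-j,\,n-k}$ therefore vanishes unless $n-j \geq n-k$, i.e.\ $j \leq k$, and is nonzero when $j = k$. Consequently $Q_k$ lies in $\mathrm{span}(S_0,\ldots,S_k)$ with a nonzero $S_k$-coefficient, so $\deg Q_k = k$ and $\theta^*$ is a Vandermonde curve.

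For (ii), observe that $Q_b^{\dagger}(t) = b!(n-b)!\, W_{\theta}^{\mathfrak{d}}(-t, e_{n-b})$. Applying Corollary \ref{uw}(ii) with $\ell = a$ and $v = e_{n-b}$ gives
\[
\langle P_a,\, Q_b^{\dagger}\rangle_n^{\mathfrak{d}} = b!(n-b)!\,(e_{n-b})_a = b!(n-b)!\,\delta_{a,\,n-b};
\]
when the Kronecker delta fires we have $a+b = n$, so $(n-b)! = a!$, producing the announced $a!\,b!\,\delta_{a+b,n}$.

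For (iii), the coefficients in the displayed expansion pin down the matrix $T^*$ such that $\theta^* = T^*.\eta$; as a function of $T$, it is a composition of inversion, of the anti-diagonal reversal $J$ (with $J e_k = e_{n-k}$) that accounts for the index shift $(j,k) \mapsto (n-j,n-k)$, and of conjugation by the binomial diagonal $D = \mathrm{diag}\bigl(\binom{n}{0},\ldots,\binom{n}{n}\bigr)$. The involutive property is then verified by composing this recipe with itself: the identities $J^2 = I$, $JDJ = D$ (which holds because $\binom{n}{k} = \binom{n}{n-k}$) and $(A^{-1})^{-1} = A$ force the ancillary factors to cancel in pairs, returning $T$. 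I expect the main obstacle to be purely bookkeeping, namely tracking which of these auxiliary involutions are silently absorbed in the compressed notation $D^{-1}T^{-1}D$ of the theorem statement, so that its involutive character becomes manifest.
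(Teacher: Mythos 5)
Your treatment of (i) and (ii) follows the paper's route exactly: the paper's entire proof consists of the expansion of $Q_\ell$ obtained from Proposition \ref{key}, the assertion that it establishes (i) and (iii), and the remark that (ii) follows from Corollary \ref{uw}. Your fleshing-out of (i) (lower-triangularity of $T^{-1}$ forces $Q_k\in\mathrm{span}(S_0,\dots,S_k)$ with nonzero top coefficient) and of (ii) (evaluate Corollary \ref{uw}(ii) at $v=e_{n-b}$) is correct and is precisely what the paper leaves implicit.

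The gap is in (iii), and your suspicion that the ``compressed notation $D^{-1}T^{-1}D$'' is hiding something is well founded: no bookkeeping will reconcile your expansion with that formula, because reading the matrix off
\[
(T^*)_{kj}=\frac{\binom{n}{j}}{\binom{n}{k}}\,(T^{-1})_{n-j,\,n-k}
\]
gives $T^*=D^{-1}\,J\,(T^{-1})^{T}\,J\,D$, i.e.\ conjugation by $D$ of the \emph{anti-transpose} of $T^{-1}$ (its reflection across the anti-diagonal), not of $T^{-1}$ itself. Your phrase ``anti-diagonal reversal $J$ accounting for the index shift $(j,k)\mapsto(n-j,n-k)$'' conflates conjugation by $J$ (which sends the $(i,j)$ entry to position $(n-i,n-j)$ without swapping row and column) with the reflection $M\mapsto JM^{T}J$; the transpose is genuinely present, and the identity $(A^{T})^{T}=A$ must join $J^{2}=I$ and $JDJ=D$ in your list before the involution check closes. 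A concrete test with $n=2$ and $T=I+cE_{10}$ (where $E_{ij}$ has a single $1$ in position $(i,j)$): then $\theta=(1,\,t+c,\,t^{2})$, a direct Wronskian computation gives $\theta^*=(1,\,t,\,t^{2}-2ct)$, hence $T^*=I-2cE_{21}$, whereas $D^{-1}T^{-1}D=I-\tfrac{c}{2}E_{10}$ since $D=\mathrm{diag}(1,2,1)$. So the closed form to prove is $T\mapsto D^{-1}J(T^{-1})^{T}JD$, which is indeed an involution by exactly the cancellations you describe (and one checks $I-2cE_{21}\mapsto I+cE_{10}=T$ on the example); the paper's own proof does not carry out this verification, and its displayed formula should be read with the anti-transpose inserted.
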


\begin{proof}
We compute explicitly $Q_{\ell}$ using proposition \ref{key}:
\[
Q_{\ell}(t)=\frac{\ell! (n-\ell)!}{n!} \sum_{k=0}^n \binom{n}{k} (T^{-1}e_{n-\ell})_{n-k} S_k(t).
\]
This establishes (iii) and (i); and (ii) follows from corollary \ref{uw}.
\end{proof}

\subsection{Sheffer and binomial curves}

We identify the set of Sheffer curves attached to $\mathfrak{d}$ with the invertible elements in the binomial convolution algebra $\mathbb{C}^{n+1}$ via the map $v \mapsto v \star \eta(t)$.

\begin{theorem} The following properties hold:
\begin{enumerate}
\item[(i)] If $\theta$ is a Sheffer curve attached to $\mathfrak{d}$, $\theta^*$ is its inverse for binomial convolution.
\item[(ii)] $n! W^{\mathfrak{d}}_{\theta}(t, \eta(s))=\theta_n^*(t+s)$.
\item[(iii)] The map $v \mapsto v \star \eta(t)$ is an algebra morphism between $(\mathbb{C}^{n+1}, \star)^{\times}$ and $(\mathbb{C}_n[t], \boxplus_n^{\mathfrak{d}})^{\times}$.
\end{enumerate}
\end{theorem}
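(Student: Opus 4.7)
The cleanest strategy is to first establish (i) via Proposition \ref{key}, then derive (ii) as an immediate consequence, and finally recognize (iii) as a composition of two canonical algebra isomorphisms. For (i), I write $\theta = v \star \eta$ as $\theta = M(v).\eta$, so that $T = M(v)$ in the notation of Proposition \ref{key} and $T^{-1} = M(v^{-1})$ by the algebra isomorphism between the convolution algebra and the commutant of $N$. Applying Proposition \ref{key} with ambient vector $e_{n-\ell}$ and using $(M(v^{-1}))_{n-k, n-\ell} = \binom{n-k}{n-\ell}(v^{-1})_{\ell-k}$ (which vanishes for $k > \ell$) gives
\[
Q_\ell(t) = \ell!(n-\ell)!\, W^{\mathfrak{d}}_\theta(t, e_{n-\ell}) = \frac{\ell!(n-\ell)!}{n!} \sum_{k=0}^{\ell} \binom{n}{k}\binom{n-k}{n-\ell} (v^{-1})_{\ell-k} S_k(t).
\]
The prefactor collapses by the multinomial identity $\frac{\ell!(n-\ell)!}{n!}\binom{n}{k}\binom{n-k}{n-\ell} = \binom{\ell}{k}$, yielding $Q_\ell(t) = (v^{-1} \star \eta(t))_\ell$ and hence $\theta^* = v^{-1} \star \eta$.

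For (ii), I invoke Proposition \ref{key} again with ambient vector $\eta(s)$. Since $T^{-1}\eta(s) = M(v^{-1})\eta(s) = v^{-1}\star \eta(s) = \theta^*(s)$ by (i),
\[
n!\, W^{\mathfrak{d}}_\theta(t, \eta(s)) = \sum_{k=0}^n \binom{n}{k}\theta^*_{n-k}(s)\, S_k(t).
\]
The right-hand side is precisely the $n$-th component of the Sheffer identity (Proposition \ref{building}(iv)) applied to the Sheffer curve $\theta^*$: $(\theta^*(s) \star \eta(t))_n = \theta^*_n(s+t)$, which finishes (ii).

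For (iii), I reformulate the map via top components: every Sheffer curve attached to $\eta$ is determined by its degree-$n$ polynomial $P_v := (v \star \eta(t))_n$, and by Proposition \ref{misc}(v) the assignment $v \mapsto P_v$ restricts to a bijection between $\star$-invertibles in $\mathbb{C}^{n+1}$ and $\boxplus_n^{\mathfrak{d}}$-invertibles in $\mathbb{C}_n[t]$. Using $\mathfrak{d}^{n-k}S_n = (n)_{n-k} S_k$, I rewrite
\[
P_v = \sum_{k=0}^n \binom{n}{k} v_{n-k} S_k(t) = \sum_{j=0}^n \frac{v_j}{j!}\mathfrak{d}^j S_n = D_v(S_n), \qquad D_v := \sum_{j=0}^n \frac{v_j}{j!}\mathfrak{d}^j \in \mathcal{D}_n.
\]
The map $D \mapsto D(S_n)$ is by definition an algebra isomorphism between $\mathcal{D}_n$ and $(\mathbb{C}_n[t], \boxplus_n^{\mathfrak{d}})$, so it suffices to check that $v \mapsto D_v$ is an algebra morphism from $(\mathbb{C}^{n+1}, \star)$ to $\mathcal{D}_n$. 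Expanding, the coefficient of $\mathfrak{d}^m$ in $D_v D_w$ equals $\frac{1}{m!}\sum_{j+k=m}\binom{m}{j}v_j w_k = \frac{(v \star w)_m}{m!}$, so $D_v D_w = D_{v \star w}$, completing the argument.

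The only real obstacle is bookkeeping in (i): one must correctly identify the entry $(M(v^{-1}))_{n-k, n-\ell}$, keep the vanishing range $k \le \ell$ in mind, and verify that the combinatorial prefactor telescopes to $\binom{\ell}{k}$. Once (i) is secured, part (ii) is a one-line appeal to the Sheffer identity for $\theta^*$, and (iii) factors formally through the differential-operator algebra $\mathcal{D}_n$.
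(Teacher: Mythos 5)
Your proof is correct. Parts (i) and (ii) follow essentially the same route as the paper: both rest on Proposition \ref{key}, which gives $n!\,W^{\mathfrak{d}}_{\theta}(t,v)=(T^{-1}v\star\eta(t))_n$ with $T=M(u)$ and $T^{-1}=M(u^{-1})$. The only difference is cosmetic: you unwind the matrix entries of $M(v^{-1})$ and telescope the prefactor via $\frac{\ell!(n-\ell)!}{n!}\binom{n}{k}\binom{n-k}{n-\ell}=\binom{\ell}{k}$, whereas the paper stays inside the convolution algebra and uses $(w\star e_{n-k})_n=\binom{n}{k}w_k$ and $\eta(s)\star\eta(t)=\eta(s+t)$; both land on $\theta^*=u^{-1}\star\eta$ and $\theta^*_n(t+s)$. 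Part (iii) is where you genuinely diverge. The paper computes $(u\star\eta(t))\boxplus_n^{\mathfrak{d}}(v\star\eta(t))$ through the polarity-pairing formula for $\boxplus_n^{\mathfrak{d}}$ (Proposition \ref{misc}(vi)) and then invokes the identity $\langle u\star\eta(t),v\star\eta(-t)\rangle_n^{\mathfrak{d}}=n!\,u\star v$ from Theorem \ref{masse}. You instead factor the map through the operator algebra $\mathcal{D}_n$: writing $(v\star\eta(t))_n=D_v(S_n)$ with $D_v=\sum_j\frac{v_j}{j!}\mathfrak{d}^j$ and checking $D_vD_w=D_{v\star w}$ by a one-line binomial computation, the morphism property becomes an immediate consequence of the very definition of $\boxplus_n^{\mathfrak{d}}$ as the structure transported along $D\mapsto D(S_n)$. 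Your route is self-contained and arguably closer to the definitions, at the cost of making explicit the identification of a Sheffer curve with its top-degree component (an identification the paper leaves implicit in the statement); the paper's route buys a uniform treatment via the pairing, which it reuses elsewhere.
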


\begin{proof}
If $\theta(t)=u \star \eta(t)=M(u).\eta(t)$, we have 
\begin{align*}
n!W^{\mathfrak{d}}_{\theta}(t, v) &=((M(u))^{-1}v)\star \eta(t))_n \\
&=((M(u^{-1}))v)\star \eta(t))_n \\
&= (u^{-1}\star v\star \eta(t))_n.
\end{align*}
Hence 
\begin{align*}
k! (n-k)! W^{\mathfrak{d}}(t, e_{n-k})&=\dfrac{1}{\binom{n}{k}} (u^{-1}\star e_{n-k}\star \eta(t))_n \\
&=\dfrac{1}{\binom{n}{k}}\left((u^{-1}\star \eta(t))\star e_{n-k}\right)_n \\
&=(u^{-1} \star \eta(t))_k.
\end{align*}
Besides, 
\begin{align*}
n!W^{\mathfrak{d}}_{\theta}(t, \eta(s)) &= (u^{-1}\star \eta(s) \star \eta(t))_n \\
&=  (u^{-1}\star \eta(t+s))_n \\
&=\theta^*_n(t+s).
\end{align*}
Finally,
\begin{align*}
\left((u \star \eta(t)) \boxplus_n^{\mathfrak{d}} (v \star \eta(t))\right) (z)&= \frac{1}{n!}\langle u \star \eta(t), v \star \eta(z-t) \rangle_n^{\mathfrak{d}} \\
&= \frac{1}{n!}\langle u \star \eta(t), (v \star \eta(z))\star \eta(-t)) \rangle_n^{\mathfrak{d}} \\
&=u \star v \star \eta(z)
\end{align*}
thanks to theorem \ref{masse} (v).
\end{proof}

We now consider binomial curves associated with an umbral structure that is not necessarily the one used to compute the Wronskian.

\begin{theorem}
Let $(\mathfrak{d}, \eta, (S_k)_{0 \leq k \leq n})$ and $(\mathfrak{s}, \vartheta, (T_k)_{0 \leq k \leq n})$ be two umbral structures. The dual of the binomial curve $\vartheta$ with respect to the umbral structure $\mathfrak{d}$ is $\left(\dfrac{\mathfrak{s}^{n-k} S_n(t)}{(n)_k}\right)_{0 \leq k \leq n}$. The corresponding Wronskian is given by
\[
W_{\vartheta}^{\mathfrak{d}}(t, v)=\frac{1}{n!} \sum_{p=0}^n \dfrac{\mathfrak{s}^p S_n(t)}{p!} v_p.
\]
\end{theorem}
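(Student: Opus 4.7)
The plan is to compute the umbral Wronskian first and then deduce the description of the dual curve by specialization, following the pattern of the previous proofs in this section.

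For the Wronskian, I would start from corollary \ref{uw}(i), which applied to the Vandermonde curve $\theta=\vartheta$ gives
\[
n!\,W^{\mathfrak{d}}_{\vartheta}(t,\vartheta(s))=S_n(t+s).
\]
Next, I would apply the umbral Taylor formula (proposition \ref{taylor}) to the polynomial $S_n\in\mathbb{C}_n[t]$ \emph{for the umbral structure $\mathfrak{s}$}, whose binomial curve is precisely $\vartheta=(T_p)_{0\leq p\leq n}$. Since proposition \ref{taylor} applies to any polynomial of degree at most $n$, we obtain
\[
S_n(t+s)=\sum_{p=0}^{n}\frac{\mathfrak{s}^p S_n(t)}{p!}\,T_p(s).
\]
Combining the two displays, for every $s\in\mathbb{C}$ the vector $v=\vartheta(s)=(T_p(s))_{0\leq p\leq n}$ satisfies $n!\,W_{\vartheta}^{\mathfrak{d}}(t,v)=\sum_{p}\frac{\mathfrak{s}^p S_n(t)}{p!}v_p$, which is exactly the claimed formula evaluated on $v=\vartheta(s)$.

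Both sides of the claimed identity are $\mathbb{C}$-linear in $v$, so to conclude it suffices to check that the family $\{\vartheta(s):s\in\mathbb{C}\}$ spans $\mathbb{C}^{n+1}$. This follows from the generalized Vandermonde determinant stated in the introduction: for any $n+1$ pairwise distinct scalars $s_0,\ldots,s_n$, the vectors $\vartheta(s_0),\ldots,\vartheta(s_n)$ are linearly independent (their determinant equals a nonzero constant times $\prod_{i<j}(s_j-s_i)$). Linearity then extends the identity to all $v\in\mathbb{C}^{n+1}$ and yields the Wronskian formula.

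Finally, to obtain the dual curve I would specialize $v=e_{n-k}$ in the Wronskian formula, keeping only the term $p=n-k$, and use the defining relation $Q_k(t)=k!(n-k)!\,W_{\vartheta}^{\mathfrak{d}}(t,e_{n-k})$ to compute
\[
Q_k(t)=\frac{k!(n-k)!}{n!}\cdot\frac{\mathfrak{s}^{n-k}S_n(t)}{(n-k)!}=\frac{\mathfrak{s}^{n-k}S_n(t)}{(n)_k}.
\]
The main conceptual subtlety is that umbral Taylor is being used across two unrelated umbral structures: $W^{\mathfrak{d}}$ is built from $\mathfrak{d}$ but the expansion of $S_n(t+s)$ is carried out with respect to $\mathfrak{s}$. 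Once one accepts that proposition \ref{taylor} is valid for every polynomial irrespective of which Sheffer family it belongs to, the rest of the argument is routine; there is no computational obstacle beyond the linear-algebraic extension step.
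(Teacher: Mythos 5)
Your proof is correct and follows the paper's own argument exactly: the paper likewise observes that $W^{\mathfrak{d}}_{\vartheta}$ is determined by its values on the family $v=\vartheta(s)$, where corollary \ref{uw}(i) gives $S_n(t+s)/n!$, and then expands $S_n(t+s)$ via the umbral Taylor formula (proposition \ref{taylor}) applied with the \emph{other} structure $\mathfrak{s}$, whose binomial curve is $\vartheta$; you merely make the spanning and linearity steps explicit, which the paper leaves implicit. One small caveat on the last line: your penultimate expression $Q_k(t)=\frac{k!}{n!}\,\mathfrak{s}^{n-k}S_n(t)$ is the correct normalization (it is exactly what makes $\eta$ self-dual when $\mathfrak{s}=\mathfrak{d}$, since $\mathfrak{d}^{n-k}S_n=(n)_{n-k}S_k$), but under the paper's falling-factorial convention $(n)_k=n!/(n-k)!$ it equals $\mathfrak{s}^{n-k}S_n(t)/(n)_{n-k}$ rather than $\mathfrak{s}^{n-k}S_n(t)/(n)_k$, so your final rewriting silently reproduces what appears to be an index typo in the theorem's statement rather than following from the arithmetic.
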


\begin{remark}
If $\mathfrak{s}=\mathfrak{d}$ and $\vartheta = \eta$, the curve $\eta$ is self-dual and 
\[
n!W_{\eta}^{\mathfrak{d}}(t, v)=\sum_{p=0}^n \binom{n}{p} S_{n-p}(t)v_p.
\]
\end{remark}


\begin{proof}
$W_{\eta}(t, v)$ is entirely determined on the vectors $v=\eta(s)$ for all complex numbers $s$, where it equals $S_n(t+s)/n!$. On the other hand,
\[
\frac{1}{n!} \sum_{p=0}^n \dfrac{\mathfrak{s}^p S_n(t)}{p!} S_p(t)= S_{n}(s+t)/n!
\]
thanks to Taylor umbral formula.
\end{proof}

\subsection{Appell polynomials}

Appell polynomials correspond to the case where $\mathfrak{d}$ is the standard derivation. In this case, the associated binomial curve is $e(t)=(t^k)_{0 \leq k \leq n}$.

\begin{theorem}
The map $v \mapsto v \star e(t)$ gives an isomorphism between $(\mathbb{C}^{n+1}, \star)^{\times}$ and $(\mathbb{C}_n[t], \boxplus)^{\times}$.
\end{theorem}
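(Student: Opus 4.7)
The plan is to obtain this theorem as an immediate specialization of the preceding theorem (part (iii)) to the classical case $\mathfrak{d}=\partial/\partial t$. By the correspondence between delta operators and binomial curves (Proposition \ref{building}(iii)), the derivation $\partial/\partial t$ corresponds to the binomial curve $\eta=e$. Moreover, by the convention introduced in the definition of umbral additive convolution, $\boxplus_n^{\mathfrak{d}}=\boxplus_n$ in this case. So the preceding theorem directly yields that $v \mapsto v \star e(t)$ is an algebra morphism from $(\mathbb{C}^{n+1},\star)^{\times}$ to $(\mathbb{C}_n[t],\boxplus_n)^{\times}$.

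What remains is to verify that this morphism is actually bijective. First I would write out the map explicitly: identifying $v \star e(t)$ with the top-degree component of the Vandermonde curve $v \star e$, one gets
\[
v \star e(t) = \sum_{k=0}^{n} \binom{n}{k} v_{n-k}\, t^k.
\]
This is a linear map $\mathbb{C}^{n+1} \to \mathbb{C}_n[t]$, and it is obviously a linear isomorphism since it sends $v$ to the polynomial whose coefficient of $t^k$ is $\binom{n}{k} v_{n-k}$.

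It then suffices to check that the condition $v \in \mathscr{C}onv(\mathbb{C})^{\times}$, namely $v_0 \neq 0$, matches the invertibility condition on the right-hand side. The leading coefficient of $v \star e(t)$ is exactly $v_0$, so $v_0 \neq 0$ if and only if $v \star e(t)$ has degree $n$. By Proposition \ref{misc}(v), the invertible elements of $(\mathbb{C}_n[t],\boxplus_n)$ are precisely the polynomials of degree $n$. Hence the restriction of our map to invertible elements is a bijection.

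There is no genuine obstacle in this argument: everything is a direct specialization of earlier results combined with a trivial linear-algebra check. The only small point requiring attention is the compatibility between the convolution-invertibility condition on $\mathbb{C}^{n+1}$ ($v_0 \neq 0$) and the additive-convolution invertibility on $\mathbb{C}_n[t]$ (top coefficient nonzero), which is handled in the last paragraph.
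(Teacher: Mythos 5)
Your proposal is correct and follows exactly the route the paper intends: the theorem is stated without proof as the specialization of the preceding general theorem (part (iii)) to $\mathfrak{d}=\partial/\partial t$, $\eta=e$. Your additional verification of bijectivity --- matching the condition $v_0\neq 0$ with the degree-$n$ criterion of Proposition \ref{misc}(v) via the explicit formula $v\star e(t)=\sum_{k}\binom{n}{k}v_{n-k}t^{k}$ --- correctly supplies the one detail the paper leaves implicit in upgrading ``morphism'' to ``isomorphism''.
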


\begin{theorem}
Let $(A_n)_{n \geq 0}$ an Appell sequence and let $(B_n)_{n \geq 0}$ be the inverse Appell sequence. Then, 
\[
B_n(t+x)= n! \, W(t, e_{x}). 
\]
\end{theorem}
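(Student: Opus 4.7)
The plan is to obtain this identity as a direct specialization of item (ii) of the preceding theorem to the Appell setting. In that setting the umbral structure is $\mathfrak{d}=\partial/\partial t$, whose attached binomial curve is $\eta(s)=e(s)=(1,s,s^{2},\ldots,s^{n})^{T}$, and the umbral Wronskian $W^{\mathfrak{d}}$ coincides with the classical Wronskian $W$, matching the convention the paper already adopted for $\langle\cdot,\cdot\rangle_{n}$ and $\boxplus_{n}$ when $\mathfrak{d}$ is the derivation. Under the natural convention, the symbol $e_{x}$ in the statement must be read as $e(x)$, since otherwise the expression $B_{n}(t+x)$ on the right would not make sense as a polynomial evaluated at a shifted point.

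Concretely, I would take $\theta$ to be the Sheffer curve $(A_{k})_{0\leq k\leq n}$ associated with the Appell sequence $(A_{n})_{n\geq 0}$. By item (i) of the preceding theorem, the dual curve $\theta^{*}$ is the inverse of $\theta$ for binomial convolution; since the inverse of $(A_{n})_{n\geq 0}$ in the group of Appell sequences agrees with its inverse under binomial convolution, and this inverse is $(B_{n})_{n\geq 0}$ by hypothesis, we conclude $\theta^{*}=(B_{k})_{0\leq k\leq n}$, and in particular $\theta^{*}_{n}=B_{n}$. Substituting $s=x$ into the identity $n!\,W^{\mathfrak{d}}_{\theta}(t,\eta(s))=\theta^{*}_{n}(t+s)$ from item (ii) then yields
\[
n!\,W(t,e(x))=B_{n}(t+x),
\]
which is exactly the claimed formula.

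There is essentially no obstacle here: the entire substantive content has already been established in the preceding theorem, and the present statement merely records what that duality becomes in the classical Appell case. The only point worth making explicit in writing up is the notational identification $e_{x}=e(x)$ and the fact that for Appell sequences the two notions of inverse (within the group of Appell sequences, and within the convolution algebra of sequences) coincide, which is ensured by the group isomorphism recalled earlier between Appell sequences and invertible power series.
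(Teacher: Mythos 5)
Your proof is correct and follows exactly the route the paper intends: the statement is the specialization of the general Sheffer duality theorem (items (i) and (ii)) to $\mathfrak{d}=\partial/\partial t$, with $\eta=e$, $\theta=(A_k)$, $\theta^*=(B_k)$, and $e_x=e(x)$. The paper itself gives no separate proof, leaving precisely this specialization implicit, and your explicit remarks on the notation $e_x$ and on the coincidence of the two notions of inverse are the right points to make.
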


Setting $t=0$, we get:
\begin{corollary}\emph{(cf. \cite[\S 4]{Friendly}, \cite{Costabile})} \label{cocorico}  If $a_k=A_k(0)$,
\[
B_n(t)=\frac{1}{a_0^{n+1}}\begin{vmatrix}
a_0 &  &  &  &  &1\\ 
a_1 & a_0 &  &  &  &t\\ 
a_2 & \binom{2}{1} a_1 & a_0 &  &  & t^2 \\[3pt] 
a_3 & \binom{3}{1} a_2 & \binom{3}{2}a_1 & a_0 & & \vdots\\ 
\vdots & \vdots &  &  & a_0 & t^{n-1} \\ 
a_n & \binom{n}{1} a_{n-1} & \binom{n}{2}a_{n-2} & \ldots & \binom{n}{n-1} a_1 & t^n
\end{vmatrix} 
\]
\end{corollary}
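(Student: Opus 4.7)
The plan is to deduce the corollary directly from the preceding theorem $B_n(t+x) = n!\, W(t, e_x)$ by setting $t=0$, so that $B_n(x) = n!\, W(0, e_x)$. It then suffices to unpack the determinantal definition of the Wronskian at $t=0$ and identify the result with the displayed determinant divided by $a_0^{n+1}$.

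Since the Appell setting is classical, $\mathfrak{d}=\partial/\partial t$ and, by definition of the Appell property, $A_i^{(j)}(0) = (i)_j\, a_{i-j} = j!\binom{i}{j} a_{i-j}$ (with $a_{i-j}=0$ for $i<j$). Thus the denominator $\det(\theta(0), \theta'(0), \ldots, \theta^{(n)}(0))$ is the determinant of a lower-triangular matrix whose $(i,j)$-entry is $j!\binom{i}{j} a_{i-j}$. Factoring $j!$ out of column $j$ for $j=0,\dots,n$ leaves a lower-triangular matrix with $a_0$ on the diagonal, so the denominator equals $a_0^{n+1}\prod_{j=0}^n j!$.

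For the numerator $\det(\theta(0), \theta'(0), \ldots, \theta^{(n-1)}(0), e_x)$, the first $n$ columns carry the same $j!$ scaling for $j=0,\dots,n-1$, while the last column is $(1,x,x^2,\ldots,x^n)^{\top}$. Pulling the factorial $\prod_{j=0}^{n-1} j!$ out of the first $n$ columns leaves exactly the matrix $M(x)$ displayed in the statement of the corollary. Combining everything,
\[
B_n(x) = n!\cdot\frac{\prod_{j=0}^{n-1} j!}{a_0^{n+1}\prod_{j=0}^{n} j!}\,\det M(x) = \frac{\det M(x)}{a_0^{n+1}},
\]
since $\prod_{j=0}^{n} j! = n!\prod_{j=0}^{n-1} j!$. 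Replacing $x$ by $t$ gives the stated formula.

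The only step requiring any care is the bookkeeping of factorials: one must remember that $A_i^{(j)}(0) = (i)_j a_{i-j}$, not $\binom{i}{j}a_{i-j}$, so that the Wronskian matrices differ from the matrix $M$ of the corollary by an extra factor of $j!$ in column $j$. Once this column-scaling is correctly tracked in numerator and denominator, the cancellation with the prefactor $n!$ coming from the preceding theorem is automatic, and no further computation is needed.
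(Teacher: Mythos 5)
Your proof is correct and follows exactly the route the paper intends: the paper simply says ``setting $t=0$'' in the identity $B_n(t+x)=n!\,W(t,e_x)$ and leaves the determinant unpacking implicit, while you carry out that unpacking with the correct factorial bookkeeping ($A_i^{(j)}(0)=(i)_j a_{i-j}$, the column factors $j!$, and the cancellation of $n!$ against $\prod_{j=0}^{n}j!/\prod_{j=0}^{n-1}j!$). Nothing is missing.
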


If we apply this to the Appell sequence $\left( \dfrac{\Delta t^{n+1}}{n+1}\right)_{n \geq 0}$ whose inverse is the sequence $(B_n(t))_{n \geq 0}$ of Bernoulli polynomials, then $a_k=\dfrac{1}{k+1}$ and corollary \ref{cocorico} gives the approach of Bernoulli polynomials given in \cite{Costabile2}.

\subsection{Vandermonde determinants}

We begin by comparing the standard and umbral Wronskians.
\begin{proposition} \label{devw}
$W_{\theta}^{\mathfrak{d}}(t, v)=S_n(t) \boxplus_n W_{\theta}(t,v)$.
\end{proposition}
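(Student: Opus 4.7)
The plan is to exploit the linearity of both sides in $v$ and reduce the identity to evaluation on a well-chosen spanning family of vectors. A natural such family is $\{\theta(s) : s \in \mathbb{C}\}$: writing $\theta = T.e$ for some $T$ in $\mathrm{T}_{n+1}(\mathbb{C})$, the matrix with columns $\theta(s_0), \ldots, \theta(s_n)$ has determinant equal to $\det(T)$ times the classical Vandermonde $\prod_{i<j}(s_j - s_i)$, which is nonzero for distinct $s_i$. Consequently $n+1$ such vectors at distinct points form a basis of $\mathbb{C}^{n+1}$, and it is enough to verify Proposition \ref{devw} on vectors of the form $\theta(s)$.

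On such vectors, I would invoke Corollary \ref{uw}(i) twice. Applied directly to $\mathfrak{d}$, it gives $n!\,W_{\theta}^{\mathfrak{d}}(t, \theta(s)) = S_n(t+s)$. Applied a second time, but now to the umbral structure attached to the standard derivation $\partial / \partial t$ (whose binomial curve is $e(t)$, with top component $t^n$), it yields $n!\,W_{\theta}(t, \theta(s)) = (t+s)^n$. The claim thereby reduces to the polynomial identity
\[
S_n(t) \boxplus_n (t+s)^n = S_n(t+s)
\]
in the variable $t$, for every $s \in \mathbb{C}$.

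The verification of this last identity is routine. On $\mathbb{C}_n[t]$ the translation $\tau_s \colon P(t) \mapsto P(t+s)$ coincides with the differential operator with constant coefficients $\sum_{k=0}^{n} \tfrac{s^k}{k!}\, \partial^k$, so by Proposition \ref{misc}(iii) it commutes with $\boxplus_n$. Since $t^n$ is the neutral element of $\boxplus_n$, applying $\tau_s$ to the trivial equality $S_n(t) \boxplus_n t^n = S_n(t)$ gives $S_n(t) \boxplus_n (t+s)^n = S_n(t+s)$, which closes the proof. I do not anticipate any real obstacle; the only conceptual step is to recognize that the classical Wronskian is itself an umbral Wronskian, namely the one attached to $\mathfrak{d} = \partial / \partial t$, so that a single corollary handles both evaluations on the spanning family.
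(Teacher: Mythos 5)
Your proof is correct, but it follows a genuinely different route from the paper's. You linearize in $v$, evaluate both sides on the spanning family $\{\theta(s)\}$, and apply Corollary \ref{uw}(i) twice --- once for $\mathfrak{d}$ and once for the standard derivation, whose umbral Wronskian is precisely the classical one --- thereby reducing the proposition to the identity $S_n(t)\boxplus_n (t+s)^n = S_n(t+s)$, which is translation-equivariance of $\boxplus_n$ (Proposition \ref{misc}(iii)) applied to the neutral element $t^n$. The paper instead starts from the dual characterization of Corollary \ref{uw}(ii), namely $\langle P_{\ell}, W^{\mathfrak{d}}_{\theta}(-t,v)\rangle^{\mathfrak{d}}_n = v_{\ell}$, converts the umbral pairing into the classical one by means of the deviation polynomial $R_n$ (Proposition \ref{devv}), and concludes by nondegeneracy of the classical pairing, obtaining first $R_n \boxplus_n W^{\mathfrak{d}}_{\theta} = W_{\theta}$ and then the stated formula by convolving with $S_n$. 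Your argument is more elementary: it bypasses $R_n$ and the nondegeneracy of the pairing entirely, uses only part (i) of Corollary \ref{uw} rather than part (ii), and the observation that $W_{\theta}=W_{\theta}^{\partial/\partial t}$ lets a single statement handle both evaluations. What the paper's route buys is the equivalent ``inverse'' formulation $R_n \boxplus_n W^{\mathfrak{d}}_{\theta} = W_{\theta}$, which sits naturally alongside the other $R_n$-identities of Proposition \ref{devv} and is the form in which the comparison between classical and umbral structures is used elsewhere in the text.
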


\begin{proof}
We have 
\[
\langle \langle P_{\ell}, W_{\theta}^{\mathfrak{d}}(-t, v) \rangle \rangle^{\mathfrak{d}}_n = v_{\ell}
\] 
so we get thanks to proposition \ref{devv} that 
\[
\langle \langle P_{\ell}, (-1)^n R_n^{\dagger} \boxplus_n W_{\theta}^{\mathfrak{d}}(-t, v)  \rangle \rangle_n = v_{\ell},
\]
which proves, since the pairing is nondegenerate, that
\[
(-1)^n R_n^{\dagger}(t) \boxplus_n W_{\theta}^{\mathfrak{d}}(-t, v) = W_{\theta}(-t, v).
\]
Hence, $R_n(t) \boxplus_n W_{\theta}^{\mathfrak{d}}(t, v) = W_{\theta}(t, v)$, which gives the result.
\end{proof}

\begin{definition}
Let $\theta$ be a Vandermonde curve. If we write $\theta = T. e$ where $e(t)=(t^k)_{0 \leq k \leq n}$ is the standard curve, we define the volume of $\theta$ in $\mathbb{C}^{\times}$ as $\mathrm{vol}(\theta)=\det T$. 
\end{definition}
In other words, $\mathrm{vol}(\theta)$ is the product of all leading coefficients of the components of $\theta$. Let $dV$ denote the standard volume form $e_0 \wedge \ldots \wedge e_{n+1}$ on $\mathbb{C}^{n+1}$.

\begin{theorem}[generalized Vandermonde theorem] Let $\theta$ be a Vandermonde curve, and $\mathfrak{d}$ be an umbral structure of height $n$. Let $a_1, \ldots, a_n$ are the roots of $S_n$.
\begin{enumerate}
\item[(i)] For any $x_0, \ldots, x_n$ in $\mathbb{C}$, 
\[
\frac{\theta(x_0) \wedge \ldots \wedge \theta(x_n)}{\mathrm{vol}(\theta) \,dV}=  \prod_{i < j} (x_j-x_i) .
\]
\item[(ii)] For any $t$ in $\mathbb{C}$,
\[
\frac{\theta(t+a_1) \wedge \ldots \wedge \theta(t+a_n) \wedge v}{\mathrm{vol}(\theta) \,dV}=  \prod_{i < j} (a_j-a_i) \times n! W_{\theta}^{\mathfrak{d}}(-t, v).
\]
\end{enumerate}
\end{theorem}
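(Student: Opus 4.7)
The plan is to prove the two parts separately, with (i) a direct application of the classical Vandermonde determinant and (ii) a reduction, via the umbral Taylor formula, back to (i).

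For part (i), I would write $\theta = T.e$, where $e(t)=(t^k)_{0 \leq k \leq n}$ is the standard curve and $T \in \mathrm{T}_{n+1}(\mathbb{C})$ satisfies $\det T = \mathrm{vol}(\theta)$ by definition. Multilinearity of the wedge product then gives
\[
\theta(x_0) \wedge \ldots \wedge \theta(x_n) = (\det T) \cdot e(x_0) \wedge \ldots \wedge e(x_n),
\]
and the classical Vandermonde determinant identifies the right-hand wedge with $\prod_{i<j}(x_j - x_i)\,dV$.

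For part (ii), the essential idea is to apply the umbral Taylor formula (Proposition \ref{taylor}) componentwise, yielding
\[
\theta(t+a_k) = \sum_{j=0}^{n} \frac{S_j(a_k)}{j!}\,\mathfrak{d}^{j}\theta(t).
\]
Since $a_1,\ldots,a_n$ are the roots of $S_n$, the top term $j=n$ vanishes identically, so the $n$ vectors $\theta(t+a_k)$ all lie in the $n$-dimensional span of $\{\mathfrak{d}^{j}\theta(t):0 \leq j \leq n-1\}$. Expanding the $n$-fold wedge,
\[
\theta(t+a_1) \wedge \ldots \wedge \theta(t+a_n) = \det(C)\cdot \theta(t) \wedge \mathfrak{d}\theta(t) \wedge \ldots \wedge \mathfrak{d}^{n-1}\theta(t),
\]
where $C$ is the $n\times n$ matrix with entries $C_{jk}=S_j(a_k)/j!$.

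Picking the representative of the umbral structure for which $\mathfrak{d}$ has leading term $\partial/\partial t$ (so that each $S_j$ is monic of degree $j$), the classical Vandermonde computation gives $\det(S_j(a_k))_{0\leq j \leq n-1,\,1\leq k \leq n} = \prod_{i<j}(a_j-a_i)$. Wedging with $v$ and using the definition of $W^{\mathfrak{d}}_{\theta}$ together with the identity
\[
\theta(t) \wedge \mathfrak{d}\theta(t) \wedge \ldots \wedge \mathfrak{d}^{n}\theta(t) = \mathrm{vol}(\theta)\cdot\textstyle\prod_{i=0}^{n}i!\cdot dV,
\]
which follows from the factorization $A(\theta,t)=T\cdot M(\eta(t))\cdot D$ established in the proof of Proposition \ref{key} (with $D=\mathrm{diag}(0!,\ldots,n!)$ and $M(\eta(t))$ unitriangular since $S_0=1$), the formula assembles, the combinatorial factor simplifying as $\prod_{i=0}^{n}i!\,/\,\prod_{j=0}^{n-1}j! = n!$. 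The only real subtlety is tracking the two bases consistently (the monomial basis $e$ defining $\mathrm{vol}(\theta)$ versus the binomial basis $\eta$ used for $A(\theta,t)$); once the normalization of $\mathfrak{d}$ is fixed so that $\eta$ is unitriangular in the $e$-basis, all factors align, and the conceptual heart of the argument is the vanishing of the $j=n$ Taylor term at the roots of $S_n$.
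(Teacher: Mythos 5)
Your proof is correct, but it follows a genuinely different route from the paper's. For (i) the paper does not use the one-line reduction $\theta = T.e$ and multilinearity; instead it introduces the symmetric multi-affine quantity $R_k(a_1,\ldots,a_k)=\bigl(\theta(a_1)\wedge\ldots\wedge\theta(a_k)\bigr)/\prod_{i<j}(a_j-a_i)$, bounds its degree, and computes its confluent limit $R_k(t,\ldots,t)$ by induction, obtaining (i) from the constancy of $R_{n+1}$. For (ii) the paper then recognizes $R_n(-a_1,\ldots,-a_n)\wedge v$ as the polar form of the \emph{classical} Wronskian $W_{\theta}(t,v)$, identifies the operator $P\mapsto S_n(t)\boxplus_n P$ with evaluation of the polar form at $t-a_1,\ldots,t-a_n$, and invokes Proposition \ref{devw} ($W^{\mathfrak{d}}_{\theta}=S_n\boxplus_n W_{\theta}$), so that (ii) appears as a confluent specialization of (i) filtered through the deviation-polynomial machinery. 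You instead prove (ii) independently: the umbral Taylor expansion of $\theta(t+a_k)$ loses its top term precisely because the $a_k$ are roots of $S_n$, the resulting $n\times n$ change-of-basis determinant $\det(S_j(a_k)/j!)$ is a Vandermonde determinant by monicity, and the normalization constant comes from the factorization $A(\theta,t)=TM(\eta(t))D$ of Proposition \ref{key}. Your argument is more elementary and self-contained (no polar forms, no deviation polynomial, no Proposition \ref{devv}/\ref{devw}), and it makes transparent \emph{why} the shifts by the roots of $S_n$ are the right evaluation points; what it does not give is the conceptual link between the umbral Wronskian and additive convolution that the paper's proof exhibits as a byproduct. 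Your handling of the root-of-unity ambiguity (fixing the monic representative so that $\eta$ is unitriangular over $e$ and $\det T=\mathrm{vol}(\theta)$) is exactly the normalization needed, and the factor $\prod_{i=0}^{n}i!/\prod_{j=0}^{n-1}j!=n!$ is accounted for correctly.
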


\begin{remark}
Remark that the for $v=\theta(t+s)$, 
\[
n! W_{\theta}^{\mathfrak{d}}(-t, v) = S_n(s)=\prod_i (s-a_i)
\] 
and the expression of (ii) reduces to the one in (i).
\end{remark}

\begin{proof}
Let 
\[
R_k(a_1, \ldots, a_k)= \dfrac{\theta(a_1) \wedge \ldots \wedge \theta(a_k)}{\prod_{i < j} (a_j-a_i)},
\]
it is a symetric polynomial in the $a_i's$ with values in $\wedge^{k} \mathbb{C}^{n+1}$, whose components have degree at most $n+1-k$ is each $a_i$. We prove by induction that 

\[
\displaystyle
R_k(t,\ldots, t) = \frac{\theta_t}{0!} \wedge \ldots \wedge \frac{\theta^{(k-1)}_t}{(k-1)!} 
\cdot
\]
We have \[
R_{k+1}(t_1, \ldots, t_k, a)=\frac{R_k(t_1, \ldots, t_k, a)}{(a-t_1) \ldots (a-t_k)}
\] so 
\begin{align*}
R_{k+1}&(t, \ldots, t, a) = \frac{R_{k}(t, \ldots, t) \wedge \theta_a}{(a-t)^k} \\
&= \left(\frac{\theta_t}{0!} \wedge \ldots \wedge \frac{\theta^{(k-1)}_t}{(k-1)!} \right) \wedge 
\frac{\sum_{p=0}^k (a-t)^p \displaystyle \frac{\theta_t^{(p)}}{p!} + (a-t)^{k+1}S(a)}{(a-t)^k} \\
&=\frac{\theta_t}{0!} \wedge \ldots \wedge \frac{\theta^{k}_t}{k!} + (a-t)S(a).
\end{align*}
In particular, $R_{n+1}=\mathrm{vol}(\theta) dV$. We have 
\[
\frac{R_n(t, \ldots, t)\wedge v}{\mathrm{vol}(\theta) dV} = n! W_{\theta}(-t, v).  
\]
so that \[
\dfrac{R_n(-a_1, \ldots, -a_n)\wedge v}{\mathrm{vol}(\theta) dV}
\] 
is the polar form of $W_{\theta}(t, v)$. Now, observe that the map
\[
P \mapsto S_n(t) \boxplus_n P
\]
is obtained by mapping $P$ to $L_P(t-a_1, \ldots, t-a_n)$ where $L_P$ is the polar form of $P$. Indeed, since the two quantities are translation invariant, it suffices to prove that they agree on $t^n$, which is obvious. Hence 
\begin{align*}
W_{\theta}^{\mathfrak{d}}(t, v) &= S_n(t) \boxplus_n W_{\theta}(t, v) \\
&=\dfrac{R_n(-(t-a_1), \ldots, -(t-a_n))\wedge v}{\mathrm{vol}(\theta) dV} \\
&=\dfrac{R_n(a_1-t), \ldots, (a_n-t))\wedge v}{\mathrm{vol}(\theta) dV} \cdot
\end{align*}
\end{proof}

\begin{example}

The determinant
\[
\begin{vmatrix}
(t)_0 & (t+1)_0  & \ldots &(t+n-1)_0  & v_0\\ 
(t)_1 & (t+1)_1  & \ldots &  (t+n-1)_1& v_1\\ 
\vdots & \vdots  & \ldots & \vdots & \vdots \\ 
(t)_n & (t+1)_n  & \ldots & (t+n-1)_n & v_n
\end{vmatrix}
\]
is equal to 
\[
\prod_{k=1}^{n} k! \times \sum_{k=0}^n (-1)^{k} \binom{n}{k}v_{n-k} t^{(k)}.
\]

For the umbral structure $\Delta$, the binomial curve is $\eta = ((t)_k)_{0 \leq k \leq n}$. Since
$S_n(t)=(t)_n$, its roots are $0, 1, 2, \ldots, n-1$. The curve $\eta$ has volume $1$. Besides,
\[
\prod_{i<j} (a_j-a_i) = \prod_{0 \leq i<j \leq n-1} (j-i) = \prod_{k=1}^{n-1} k!
\]
so the determinant equals
\[
\prod_{k=1}^{n} k! \times \eta(-t) \star v
\]
which is exactly the required expression.
\end{example}

\section{Application to finite differences}
In this section, we focus on the case where the umbral structure $\mathfrak{d}$ is given by the forward difference operator $\Delta$ at any level. Hence :
\begin{enumerate}
\item[--] As a formal differential operator, 
$
\Delta = \sum_{p=1}^{+\infty} \dfrac{1}{p!} \dfrac{\partial^p}{\partial t^p}\cdot
$ \\
\item[--]  The binomial sequence is $(S_k(t))_{k \geq 0}=((t)_k)_{k \geq 0}$.
\item[--] $\Delta^* = \Lambda$ is the backward difference operator given by 
\[
\Lambda P(t)=P(t)-P(t-1).
\]
\item[--] $\langle A, B \rangle_n^{\Delta}= \sum_{p=0}^n (-1)^{n-p} \Delta^{n-p} A \times \Lambda^p B$.
\end{enumerate}

\subsection{Roots of the deviation}

The deviation polynomials are symmetric with respect to the vertical line $\mathrm{Re}\,(z)=-\dfrac{n-1}{2}\cdot$

\begin{proposition} \label{drift}
Let $n$ be a positive integer. Then 
\[
R_n(-(n-1)-t)=(-1)^n R_n(t).
\]
\end{proposition}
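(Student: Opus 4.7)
The plan is to transport the reflection symmetry of the falling factorial $S_n(t)=(t)_n$ about $t=\tfrac{n-1}{2}$ into the desired symmetry of $R_n$ about $t=-\tfrac{n-1}{2}$, by applying the $\dagger$-involution to the defining equation $R_n\boxplus_n S_n=t^n$ and exploiting translation invariance of $\boxplus_n$.

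A preliminary observation is that the delta operator underlying the standard additive convolution $\boxplus_n$ is $\partial/\partial t$, and one has $\partial^*=\partial$: indeed, the binomial sequence of $\partial$ is $(t^k)_{k\geq 0}$, so by the description of $\mathfrak{d}^*$ recalled at the start of \S 3.1 the binomial sequence of $\partial^*$ is $\bigl((-1)^k(t^k)^\dagger\bigr)_{k\geq 0}=(t^k)_{k\geq 0}$, hence $\partial^*=\partial$. Proposition \ref{misc}(viii) therefore specializes to $(P\boxplus_n Q)^\dagger=(-1)^n P^\dagger\boxplus_n Q^\dagger$, and applying it to $R_n\boxplus_n S_n=t^n$, together with $(t^n)^\dagger=(-1)^n t^n$, yields $R_n^\dagger\boxplus_n S_n^\dagger=t^n$.

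The second ingredient is the elementary identity
\[
S_n^\dagger(t)=(-t)_n=(-1)^n(t+n-1)_n=(-1)^n S_n(t+n-1),
\]
which packages the fact that $(t)_n$ is $(-1)^n$-symmetric about $\tfrac{n-1}{2}$. Substituting this into $R_n^\dagger\boxplus_n S_n^\dagger=t^n$ and using translation invariance of $\boxplus_n$ in its second argument gives $R_n^\dagger\boxplus_n S_n=(-1)^n(t-(n-1))^n$. Convolving both sides with $R_n$ and using $R_n\boxplus_n S_n=t^n$ together with the fact that $t^n$ is the neutral element of $\boxplus_n$, the left-hand side collapses to $R_n^\dagger$ while the right-hand side translates by $-(n-1)$:
\[
R_n^\dagger(t)=(-1)^n R_n(t-(n-1)).
\]
The substitution $t\mapsto -t$ converts this into $R_n(-(n-1)-t)=(-1)^n R_n(t)$, which is the claim.

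The only real difficulty is the bookkeeping of signs, shifts, and the $\dagger$-involution; once the reflection symmetry of $(t)_n$ about $\tfrac{n-1}{2}$ is written in the asymmetric form $S_n^\dagger(t)=(-1)^n S_n(t+n-1)$, everything follows formally from the defining equation of $R_n$ together with the standard compatibilities of $\boxplus_n$ with translation and with $\dagger$.
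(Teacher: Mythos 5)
Your proof is correct, and it takes a genuinely different route from the paper's. The paper works with polar forms: writing $K$ for the polarization of $R_n$, the defining relation $R_n\boxplus_n S_n=t^n$ becomes $K(t,t-1,\ldots,t-(n-1))=t^n$, and the symmetry of the node multiset $\{0,1,\ldots,n-1\}$ under $x\mapsto (n-1)-x$ is then read off directly on the polar form of $(-1)^nR_n(-(n-1)-t)$; this argument leans on the identification of $S_n(t)\boxplus_n P$ with the polar form of $P$ evaluated at $t-a_1,\ldots,t-a_n$, established in the Vandermonde section. You instead stay entirely inside the formal algebra of $\boxplus_n$: the reflection symmetry of $(t)_n$ is encoded as $S_n^\dagger(t)=(-1)^nS_n(t+n-1)$, and the conclusion follows from Proposition~\ref{misc}(viii) (with the correct observation that $\partial^*=\partial$), translation equivariance of $Q\mapsto P\boxplus_n Q$, and cancellation against $R_n$ using that $t^n$ is the unit. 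I checked the sign and shift bookkeeping (e.g.\ $(t+\alpha)^n\boxplus_n Q=Q(t+\alpha)$, so convolving with $(t-(n-1))^n$ translates by $-(n-1)$, and the final substitution $t\mapsto -t$ lands on the stated identity); it is consistent, and the case $n=2$ with $R_2(t)=t^2+t+\tfrac12$ confirms each intermediate identity. Your version has the advantage of using only the Section~3 formalism and no polarization, at the cost of more delicate sign-and-shift manipulation; the paper's polar-form argument makes the underlying symmetry of the roots of $(t)_n$ more visible and generalizes immediately to any $S_n$ whose root multiset is centrally symmetric.
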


\begin{proof}
Let $K$ be the polar form of $R_n$ and $L$ be the one of $(-1)^n R_n(-(n-1)-t)$. Then 
\[
K(t, t-1, \ldots, t-(n-1))=t^n.
\]
and 
\begin{align*}
L(t, t-1, \ldots, t-(n-1))&=(-1)^n K(-(n-1)-t, -(n-1)-t+1, \ldots, -t) \\
&=(-1)^n K(-t, -t-1, \ldots, -t-(n-1)) \\
&=(-1)^n (-t)^n \\
&= t^n
\end{align*}
so both polynomials are equal.
\end{proof}

The deviation polynomials admit a closed-form expression

\begin{theorem} \label{demon}
The polynomials $(R_n)_{n \geq 0}$ are given by the formula
\[
R_n = 
\underbrace{\frac{\Delta t^{n+1}}{n+1} \boxplus_n \ldots \boxplus_n \frac{\Delta t^{n+1}}{n+1}}_{n \, \textrm{times}} \boxplus_n \frac{\Lambda t^{n+1}}{n+1} 
=\frac{n! \Delta^n (\Lambda t^{2n+1})}{(2n+1)!}\cdot
\]
In particular, all roots of the deviation polynomial $R_n$ lie on the vertical line $\mathrm{Re}\,(z)=-\left(\dfrac{n-1}{2}\right)\cdot$
\end{theorem}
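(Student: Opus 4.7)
The two equalities come from Theorem~\ref{apdrift}. For the first, I apply that theorem to $\mathfrak{d}=\Delta$: Example~\ref{yoo}(i) identifies the associated Appell sequence as $A_n(t)=\Lambda t^{n+1}/(n+1)$, and plugging it in produces the advertised convolution expression. For the second equality I use the operator form $R_n=(\Delta D^{-1})^n A_n$ from the same theorem, where $D=\partial/\partial t$. Since $D$, $\Delta$ and $\Lambda$ are all translation-invariant they commute pairwise, and $D^{-1}t^k=t^{k+1}/(k+1)$ modulo a constant which is killed by the outer $\Delta$; a straightforward induction on the exponent then gives
\[
(\Delta D^{-1})^n\,\frac{\Lambda t^{n+1}}{n+1}=\frac{\Delta^n\Lambda t^{2n+1}}{(n+1)(n+2)\cdots(2n+1)}=\frac{n!\,\Delta^n\Lambda t^{2n+1}}{(2n+1)!}.
\]

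For the root localization I first unfold the closed form into an integral representation. Combining the iterated mean value formulas $\Delta^n g(t)=\int_{[0,1]^n}g^{(n)}(t+u_1+\cdots+u_n)\,du$ and $\Lambda f(t)=\int_0^1 f'(t-1+u)\,du$, applied to $f(t)=t^{2n+1}$ with $f^{(n+1)}(t)=\frac{(2n+1)!}{n!}t^n$, yields
\[
R_n(t)=\int_{[0,1]^{n+1}}(t-1+u_1+\cdots+u_{n+1})^n\,du.
\]
Centering the cube via $v_i=u_i-1/2$ and setting $g(t):=R_n\!\left(t-\tfrac{n-1}{2}\right)$ produces the symmetric form
\[
g(t)=\int_{[-1/2,1/2]^{n+1}}(t+v_1+\cdots+v_{n+1})^n\,dv=\left(\frac{2\sinh(D/2)}{D}\right)^{n+1}t^n,
\]
the last identity following from $\int_{-1/2}^{1/2}e^{vD}\,dv=(e^{D/2}-e^{-D/2})/D$. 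Locating the roots of $R_n$ on the line $\mathrm{Re}(z)=-\tfrac{n-1}{2}$ is then equivalent to showing that every zero of $g$ is purely imaginary.

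The decisive step is the change of variable $t=iu$. Using $D_t=-iD_u$ and $\sinh(-iz)=-i\sin z$, the operator $2\sinh(D_t/2)/D_t$ transforms into $2\sin(D_u/2)/D_u$, so that $g(iu)=i^n\tilde g(u)$ with
\[
\tilde g(u)=\left(\frac{2\sin(D/2)}{D}\right)^{n+1}u^n,
\]
and it suffices to prove $\tilde g$ is real-rooted. The entire function $\phi(z):=\bigl(2\sin(z/2)/z\bigr)^{n+1}$ has only real zeros, namely $z=2k\pi$ for $k\neq 0$, so it lies in the Laguerre--P\'olya class and is approximated uniformly on compacta by its truncated Weierstrass products $\phi_N(z)=\prod_{k=1}^N\bigl(1-z^2/(4k^2\pi^2)\bigr)^{n+1}$, each of which is a real polynomial with only real zeros. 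By the Hermite--Poulain theorem every $\phi_N(D)u^n$ is a degree-$n$ real-rooted polynomial (with leading coefficient $\phi_N(0)=1$); letting $N\to\infty$, convergence in coefficients forces the limit $\tilde g=\phi(D)u^n$ to be real-rooted as well, which is what we wanted.

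The only delicate point is this closing Hermite--Poulain / Laguerre--P\'olya step: one must check that the truncated operators preserve the degree (through the stable leading coefficient $\phi_N(0)=1$) in order to pass to the limit safely. Everything preceding this step (iterated MVT, commutation of $\Delta$, $\Lambda$, $D$, and the reformulation via $2\sinh(D/2)/D$) is direct computation.
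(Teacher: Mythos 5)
Your proof is correct, and the halves differ in character: the two displayed equalities are obtained essentially as in the paper, while your root localization follows a genuinely different route. For the equalities, the paper likewise combines Theorem~\ref{lagrange} (via Theorem~\ref{apdrift}) with Example~\ref{yoo}(i), and your commutation computation $(\Delta D^{-1})^n \Lambda t^{n+1}/(n+1)=n!\,\Delta^n\Lambda t^{2n+1}/(2n+1)!$ is sound, the constant ambiguity in $D^{-1}$ being killed by the $\Delta$ in each factor. For the localization, the paper's proof is a one-line appeal to the classical Walsh representation theorem (Theorem~\ref{Walsh}): the roots of $\Delta t^{n+1}$ are the points $(\zeta-1)^{-1}$ with $\zeta^{n+1}=1$, $\zeta\neq 1$, hence lie on the line $\mathrm{Re}(z)=-\tfrac12$, those of $\Lambda t^{n+1}$ lie on $\mathrm{Re}(z)=\tfrac12$, and applying Walsh at each $\boxplus_n$-step twice, once with each closed half-plane bounded by the relevant vertical line, pins the roots of the $(n+1)$-fold product onto $\mathrm{Re}(z)=-\tfrac{n}{2}+\tfrac12=-\tfrac{n-1}{2}$. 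You instead derive the exact cube integral $R_n(t)=\int_{[0,1]^{n+1}}(t-1+u_1+\cdots+u_{n+1})^n\,du$, center it to get $R_n\bigl(t-\tfrac{n-1}{2}\bigr)=\bigl(2\sinh(D/2)/D\bigr)^{n+1}t^n$, rotate $t=iu$ (legitimate because the symbol is even in $z$), and conclude real-rootedness of $\bigl(2\sin(D/2)/D\bigr)^{n+1}u^n$ by Hermite--Poulain applied to the truncated products plus a Hurwitz-type limit; all steps check, and you correctly flag that the limit is safe only because $\phi_N(0)=1$ keeps the degree equal to $n$. The trade-off is clear: the paper's argument is two lines given the quoted Walsh theorem, but it imports Grace--Walsh wholesale, the very machinery that Theorem~\ref{demon} then feeds into for Theorems~\ref{hammer} and~\ref{WC}; your argument is longer but entirely independent of apolarity theory, exhibits $R_n\bigl(t-\tfrac{n-1}{2}\bigr)$ as the image of $t^n$ under an explicit Laguerre--P\'olya multiplier, and recovers the symmetry of Proposition~\ref{drift} for free. (A small shortcut: your final truncation argument is exactly the standard proof that such a Laguerre--P\'olya function of $D$ preserves real-rootedness, so you could also cite that closure property directly.)
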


\begin{proof}
The first part follows directly from theorem \ref{lagrange} (see example \ref{yoo} (i)). The second part is an immediate consequence of Walsh representation theorem. 
\end{proof}

\subsection{Finite difference Grace and Walsh theorems}
For any real numbers $a$ and $b$ with $-\infty \leq a \leq b \leq + \infty$, we put 
\[
V(a, b)=\{z \in \mathbb{C}, a \leq \mathrm{Re}\,(z) \leq b \}.
\]
For any polynomial $P$, we denote by $V(P)$ the smallest strip $V(a, b)$ containing all roots of $P$.
\begin{theorem}[Umbral Grace theorem] \label{hammer}
Let $P$ and $Q$ be two polynomials of degree $n$ such that  $\langle P, Q \rangle_n^{\Delta} =0$. Then 
\[
V(P) \cap \left(V(Q)+\frac{n-1}{2}\right)  \neq \emptyset.
\]
\end{theorem}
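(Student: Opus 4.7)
The plan is to reduce the umbral apolarity condition to a classical apolarity condition via the deviation polynomial $R_n$, then to apply successively the classical Grace theorem and the Walsh representation theorem, using that the roots of $R_n$ are all concentrated on a single vertical line.

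First, by proposition \ref{devv} the hypothesis $\langle P,Q\rangle_n^{\Delta}=0$ rewrites as
\[
\langle R_n\boxplus_n P,\,Q\rangle_n =0,
\]
so $R_n\boxplus_n P$ and $Q$ are apolar in the classical sense. I would then note that $R_n$ is invertible for $\boxplus_n$ (by definition) and $P$ is invertible ($\deg P=n$ by proposition \ref{misc}(v)), hence $R_n\boxplus_n P$ has degree $n$, so the classical Grace theorem applies to the pair $(R_n\boxplus_n P,Q)$.

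Next I would derive from Grace that the vertical strips $V(R_n\boxplus_n P)$ and $V(Q)$ must meet. Indeed, if they were disjoint, one could separate them by a vertical line $\mathrm{Re}(z)=x$; the closed half-plane on the side of $R_n\boxplus_n P$ is a circular domain containing all its roots, so by Grace it would contain a root of $Q$, contradicting the choice of $x$. Therefore
\[
V(R_n\boxplus_n P)\cap V(Q)\neq \emptyset.
\]

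I would then use theorem \ref{demon}, which locates all roots of $R_n$ on the vertical line $\mathrm{Re}(z)=-\tfrac{n-1}{2}$, together with the Walsh representation theorem applied to $P\boxplus_n R_n$. For every $\varepsilon>0$, the two closed half-planes $\mathrm{Re}(\omega)\le -\tfrac{n-1}{2}+\varepsilon$ and $\mathrm{Re}(\omega)\ge -\tfrac{n-1}{2}-\varepsilon$ are circular domains containing every root of $R_n$, so Walsh forces each root of $R_n\boxplus_n P$ to be of the form $z+\omega$ with $z$ a root of $P$ and $\omega$ in both half-planes. Letting $\varepsilon\to 0$ gives $\mathrm{Re}(\omega)=-\tfrac{n-1}{2}$, whence
\[
V(R_n\boxplus_n P)\subseteq V(P)-\tfrac{n-1}{2}.
\]

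Combining the two inclusions, $(V(P)-\tfrac{n-1}{2})\cap V(Q)\neq \emptyset$, which translates to $V(P)\cap(V(Q)+\tfrac{n-1}{2})\neq\emptyset$, as required. The only delicate point is the Walsh step, where the roots of $R_n$ form an unbounded set (a line), so one cannot apply Walsh directly to a single bounded circular domain; the approximation by the two thickened half-planes is the key technical device. Everything else is a bookkeeping translation between classical and umbral operations via proposition \ref{devv}.
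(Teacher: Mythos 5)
Your proof is correct and follows essentially the same route as the paper: reduce to classical apolarity via Proposition \ref{devv}, apply Grace's theorem to half-planes attached to $V(Q)$, and transfer back to roots of $P$ via the Walsh representation theorem using the localization of the roots of $R_n$ from Theorem \ref{demon}. One small cleanup in the Walsh step: the $\varepsilon$-thickening is unnecessary since closed half-planes are already circular domains in the paper's sense, and the two applications of Walsh to the two closed half-planes $\mathrm{Re}(\omega)\ge -\frac{n-1}{2}$ and $\mathrm{Re}(\omega)\le -\frac{n-1}{2}$ yield two a priori different decompositions $\alpha=z_1+\omega_1=z_2+\omega_2$, so you cannot conclude that a single $\omega$ satisfies $\mathrm{Re}(\omega)=-\frac{n-1}{2}$; however, each application separately gives one of the two inequalities bounding $\mathrm{Re}(\alpha)$, which is all that is needed for the inclusion $V(R_n\boxplus_n P)\subseteq V(P)-\frac{n-1}{2}$.
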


\begin{proof}
Let us write $V(Q)=V(a,b)$. By proposition \ref{drift} (ii), $R_n \boxplus_n P$ is $n$-apolar to $Q$, so thanks to Grace theorem applied to the circular domain 
$
\{z \in \mathbb{C}, \mathrm{Re}\,(z) \leq b\}
$, it admits a root $\alpha$ of real part less than or equal to $b$. Using theorem \ref{demon}, since the circular domain
\[
\Omega = \left\{z \in \mathbb{C}, \mathrm{Re}\,(z) \geq -\frac{n-1}{2}\right\}
\]
contains all roots of $R_n$, we can apply Walsh representation theorem (theorem \ref{Walsh}) : $\alpha$ can be written as $z + \omega$ where $z$ is a root of $P$ and $\omega$ lies in $\Omega$. Hence
\[
\mathrm{Re}\,(z) = \mathrm{Re}\,(\alpha)-\mathrm{Re}\,(\omega) \leq b + \frac{n-1}{2}
\]
By the same argument, $P$ admits a root whose real part is greater than or equal to $a+\dfrac{n-1}{2}\cdot$ The result follows. 
\end{proof}

\begin{remark}
If $P(\alpha)=0$, $\langle \langle P(t), (t-\alpha)^{(n)} \rangle\rangle_n=0$. We have
\[
V(Q)+\frac{n-1}{2}=\left[\mathrm{Re}(\alpha)-\frac{n-1}{2}, \mathrm{Re}(\alpha)+\frac{n-1}{2} \right]
\]
and $\mathrm{Re}\,(\alpha)$ is an element of $V(P)$ in the middle of this interval, which is consistant theorem \ref{hammer}.
\end{remark}

\begin{remark}
It can very well happen than $P$ has no root at all inside the strip $V(Q)+\dfrac{n-1}{2}\cdot$ An explicit example is provided by the polynomial $P(t)=t^2+(2c-1)t-c$, where $c$ is some nonzero real number. 
\begin{align*}
R_2(t) \boxplus_2 P(t) &= \left(t^2+t+\dfrac{1}{2} \right) \boxplus_2 (t^2+(2c-1)t-c) \\
&=t^2+(2c-1)t-c+\frac{1}{2}(2t+2c-1)+1 \\
&=t^2+2ct
\end{align*}
so $ \langle P, t^2 \rangle_2^{\Delta}=0$. However the roots of $P$ are never purely imaginary. 
\end{remark}

\begin{remark}
Grace classical theorem does not apply for a bounded strip (which is not a circular domain). However, if $V(Q)=V(a, b)$, and $\langle \langle P, Q \rangle \rangle_n=0$, theorem \ref{hammer} can be rephrased by saying that $P$ has a root in each of the two circular domains $V \left(-\infty, b+\dfrac{n-1}{2}\right)$ and $V \left(a+\dfrac{n-1}{2}, +\infty\right)$. Note that this root need not be the same for both half-planes. Thus, Theorem \ref{hammer} constitutes a natural extension of Grace's theorem to vertical half-planes.
\end{remark}

\begin{theorem}[Umbral Walsh theorem] \label{WC}
Let $P$ and $Q$ be polynomials in $\mathbb{C}_n[t]$. Then 
\[
V(P \boxplus_n^{\mathfrak{d}} Q) \subseteq V(P)+V(Q)-\left(\dfrac{n-1}{2}\right)\cdot
\]
\end{theorem}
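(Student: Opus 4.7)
The plan is to reduce the statement to iterated applications of the classical Walsh representation theorem (Theorem \ref{Walsh}) via two previously established facts: the identity
\[
P \boxplus_n^{\mathfrak{d}} Q = R_n \boxplus_n P \boxplus_n Q
\]
from Proposition \ref{devv}, together with the localization of the roots of $R_n$ on the vertical line $\mathrm{Re}(z) = -\frac{n-1}{2}$ established in Theorem \ref{demon}.

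The first step is to bound $V(P \boxplus_n Q)$ in terms of $V(P)$ and $V(Q)$. Write $V(P) = V(a_1, b_1)$ and $V(Q) = V(a_2, b_2)$. A vertical strip is not itself a circular region, but it is the intersection of the two closed half-planes $\{z : \mathrm{Re}(z) \leq b_2\}$ and $\{z : \mathrm{Re}(z) \geq a_2\}$, each of which is circular and contains all roots of $Q$. Applying Theorem \ref{Walsh} with each of these half-planes in turn, every root of $P \boxplus_n Q$ decomposes as $z + \omega$ with $z$ a root of $P$ and $\omega$ in the corresponding half-plane; comparing real parts yields $V(P \boxplus_n Q) \subseteq V(a_1+a_2, b_1+b_2) = V(P) + V(Q)$.

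The second step repeats the same device one level up, with $R_n$ playing the role of the second operand. By Theorem \ref{demon}, every root of $R_n$ has real part exactly $-\frac{n-1}{2}$, so both closed half-planes $\{\mathrm{Re}(z) \leq -\frac{n-1}{2}\}$ and $\{\mathrm{Re}(z) \geq -\frac{n-1}{2}\}$ are circular regions containing all roots of $R_n$. Applying Theorem \ref{Walsh} to both sides forces every root of $R_n \boxplus_n (P \boxplus_n Q)$ to be of the form $r + \omega$ with $r$ a root of $P \boxplus_n Q$ and $\mathrm{Re}(\omega) = -\frac{n-1}{2}$. Combined with the first step and Proposition \ref{devv}, this yields
\[
V(P \boxplus_n^{\mathfrak{d}} Q) \subseteq V(P) + V(Q) - \frac{n-1}{2}.
\]

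The substantive input has already been packaged into Theorem \ref{demon}; the only technicality that remains is the degree hypothesis in Walsh's theorem, which is stated for polynomials of degree exactly $n$. If $P$, $Q$, or any of the intermediate convolutions has strictly smaller degree, that case must be treated separately—either by a small perturbation argument on the coefficients (since the claim is closed under limits of roots) or by direct inspection, as the relevant strips then become empty or degenerate and the containment is trivial. I expect this bookkeeping to be the only non-mechanical piece of the final write-up.
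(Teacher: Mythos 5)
Your proof is correct, but it takes a different route from the paper. The paper deduces the statement in two lines from its Umbral Grace theorem (Theorem \ref{hammer}): if $z$ is a root of $P \boxplus_n^{\mathfrak{d}} Q$ then $\langle P(t), Q(z-t)\rangle_n^{\Delta}=0$, and applying Theorem \ref{hammer} to the pair $(P, Q(z-\cdot))$ gives $\mathrm{Re}(z) \in V(P)+V(Q)-\frac{n-1}{2}$. You instead bypass the apolarity formulation entirely: you factor $P \boxplus_n^{\mathfrak{d}} Q = R_n \boxplus_n P \boxplus_n Q$ via Proposition \ref{devv} and apply the classical Walsh representation theorem twice, once per bounding half-plane of each strip, using Theorem \ref{demon} to place the roots of $R_n$ on the line $\mathrm{Re}(z)=-\frac{n-1}{2}$. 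The two arguments rest on the same substantive inputs (Theorem \ref{demon} plus the classical Grace--Walsh circle of ideas), but yours is more self-contained for this particular statement, while the paper's packaging through Theorem \ref{hammer} makes the Grace and Walsh statements formally parallel. Two small points to fix in the write-up. First, applying Walsh separately to the two half-planes $\{\mathrm{Re}(z)\le -\frac{n-1}{2}\}$ and $\{\mathrm{Re}(z)\ge -\frac{n-1}{2}\}$ does \emph{not} produce a single decomposition $r+\omega$ with $\mathrm{Re}(\omega)=-\frac{n-1}{2}$; it produces two decompositions with possibly different roots $r$ of $P\boxplus_n Q$, one giving the upper bound on the real part and one the lower bound. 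The containment you want still follows, but as stated the sentence claims more than Walsh delivers. Second, the degree bookkeeping you flag is real but mild: when $\deg P=\deg Q=n$ every intermediate convolution (including with the invertible $R_n$, by Proposition \ref{misc}(v)) again has degree $n$, so Walsh applies at each stage without perturbation; the genuinely degenerate cases $\deg P<n$ or $\deg Q<n$ are also glossed over in the paper's own statement, whose proof via Theorem \ref{hammer} likewise assumes degree exactly $n$.
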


\begin{proof}
If $z$ is a root of $P \boxplus_n^{\mathfrak{d}} Q$, $\langle P(t), Q(z-t) \rangle_n^{\Delta}=0$ so by theorem \ref{hammer}, 
\[
V(P) \cap \left(\mathrm{Re}\,(z)-V(Q) + \frac{n-1}{2} \right) \neq \emptyset.
\]
This implies that $\mathrm{Re}(z)$ belongs to $V(P)+V(Q)-\left(\dfrac{n-1}{2}\right)\cdot$
\end{proof}

\begin{remark}
The constant $\dfrac{n-1}{2}$ is optimal, as can be seen from theorem \ref{demon} since $
t^n \boxplus_n^{\mathfrak{d}} t^n = R_n$.
\end{remark}

\nocite{*}
\bibliographystyle{plain}
\bibliography{biblio}

\end{document}